\DeclareMathAlphabet{\pazocal}{OMS}{zplm}{m}{n}
\numberwithin{equation}{section}
\theoremstyle{plain}
\newtheorem{theorem}{Theorem}[section]
\newtheorem{proposition}[theorem]{Proposition}
\newtheorem{lemma}[theorem]{Lemma}
\theoremstyle{definition}
\newtheorem{remark}[theorem]{Remark}
\begin{document}
\title[Distribution of Critical Points]{On the Correlation of Critical
Points and Angular Trispectrum for Random Spherical Harmonics}
\author{Valentina Cammarota}
\address{Department of Statistics, Sapienza University of Rome}
\email{valentina.cammarota@uniroma1.it}
\author{Domenico Marinucci}
\address{Department of Mathematics, University of Rome Tor Vergata}
\email{marinucc@mat.uniroma2.it}
\thanks{ This paper has been built out of earlier research carried over with
Igor Wigman; we are very grateful to him for many suggestions and
insightful discussions. DM acknowledges the MIUR Excellence Department
Project awarded to the Department of Mathematics, University of Rome Tor
Vergata, CUP E83C18000100006. VC has received funding from the Istituto
Nazionale di Alta Matematica (INdAM) through the GNAMPA Research
Project 2019 ``Propriet\`a analitiche e geometriche di campi aleatori".}

\begin{abstract}
We prove a Central Limit Theorem for the critical points of random spherical
harmonics, in the high-energy limit. The result is a consequence of a deeper
characterization of the total number of critical points, which are shown to
be asymptotically fully correlated with the sample trispectrum, i.e., the
integral of the fourth Hermite polynomial evaluated on the eigenfunctions
themselves. As a consequence, the total number of critical points and the
nodal length are fully correlated for random spherical harmonics, in the
high-energy limit.
\end{abstract}

\maketitle

\begin{itemize}
\item \textbf{AMS Classification}: 60G60, 62M15, 53C65, 42C10, 33C55.

\item \textbf{Keywords and Phrases}: {Critical Points, Wiener-Chaos
Expansion, Spherical Harmonics, Correlation, Berry's Cancellation Phenomenon}
\end{itemize}

\section{Introduction and Main Results}

\subsection{Random spherical harmonics and sample polyspectra}

It is well-known that the eigenvalues \\ $\left\{ -\lambda _{\ell }\right\}
_{\ell =0, 1, 2, \dots }$ of the Helmholtz equation
\begin{equation*}
\Delta _{\mathbb{S}^{2}}f+\lambda _{\ell }f=0,\hspace{1cm} \Delta _{\mathbb{S}
^{2}}=\frac{\partial ^{2}}{\partial \theta ^{2}}+\cot \theta \frac{\partial
}{\partial \varphi }+\frac{1}{\sin ^{2}\theta }\frac{\partial ^{2}}{\partial
\varphi ^{2}},\hspace{1cm} \ell =1, 2, \dots .
\end{equation*}
on the two-dimensional sphere $\mathbb{S}^{2}$, are of the form
$\lambda _{\ell }=\ell (\ell +1)$ for some integer $\ell \geq 1$. For any
given eigenvalue $-\lambda _{\ell }$, the corresponding eigenspace is the $
(2\ell +1)$-dimensional space of spherical harmonics
of degree $\ell $; we can choose an arbitrary $L^{2}$-orthonormal basis $
\left\{ Y_{\mathbb{\ell }m}(.)\right\} _{m=-\ell ,\dots ,\ell }$, and
consider random eigenfunctions of the form
\begin{equation*}
f_{\ell }(x)=\frac{\sqrt{4\pi }}{\sqrt{2\ell +1}}\sum_{m=-\ell }^{\ell
}a_{\ell m}Y_{\ell m}(x),
\end{equation*}
where the coefficients $\left\{ a_{\mathbb{\ell }m}\right\} $ are
independent, standard Gaussian variables if the basis is chosen to be
real-valued; the standardization is such that $\text{Var}(f_{\ell }(x))=1$,
and the representation is invariant with respect to the choice of any
specific basis $\left\{ Y_{\ell m},\text{ }m=-\ell ,\dots ,\ell \right\} $. The
random fields $\{f_{\ell }(x),\;x\in \mathbb{S}^{2}\}$ are isotropic,
meaning that the probability laws of $f_{\ell }(\cdot )$ and $f_{\ell
}^{g}(\cdot ):=f_{\ell }(g\cdot )$ are the same for any rotation $g\in SO(3)$;
 they are also centred and Gaussian, and from the addition theorem for
spherical harmonics (see \cite{MaPeCUP}, Equation (3.42)) the covariance
function is given by,
\begin{equation*}
\mathbb{E}[f_{\ell }(x)f_{\ell }(y)]=P_{\ell }(\cos d(x,y)),
\end{equation*}
where $P_{\ell }$ are the usual Legendre polynomials, $\cos d(x,y)=\cos
\theta _{x}\cos \theta _{y}+\sin \theta _{x}\sin \theta _{y}\cos (\varphi
_{x}-\varphi _{y})$ is the spherical geodesic distance between $x$ and $y$, $
\theta \in \lbrack 0,\pi ]$, $\varphi \in \lbrack 0,2\pi )$ are standard
spherical coordinates and $(\theta _{x},\varphi _{x})$, $(\theta
_{y},\varphi _{y})$ are the spherical coordinates of $x$ and $y,$
respectively.\\

\noindent In this paper, we shall be concerned with the number of critical points of
$f_{\ell }(\cdot )$, defined as usual as
\begin{equation*}
\mathcal{N}_{\ell }^{c}=\left\{ x\in \mathbb{S}^{2}:\nabla f_{\ell
}(x)=0\right\};
\end{equation*}
it was shown in \cite{nicolaescu} (see also \cite{CMW}) that we have
\begin{equation*}
\mathbb{E}[\mathcal{N}_{\ell }^{c}]=\frac{2}{\sqrt{3}}\ell (\ell +1)+O(1),
\end{equation*}
whereas (see \cite{CW}) the variance of $\mathcal{N}_{\ell }^{c}$ is
asymptotic to
\begin{equation}
\text{Var}(\mathcal{N}_{\ell }^{c})=\frac{\ell ^{2}\log \ell }{3^{3}\pi ^{2}}
+O(\ell ^{2}), \text{ as }\ell \rightarrow \infty.  \label{varcp}
\end{equation}
We now study the limiting distribution of the fluctuations around the
expected value. First recall that the sequence of Hermite polynomials
$H_{q}(u)$ is defined by
\begin{equation*}
H_{q}(u):=(-1)^{q}\frac{1}{\phi (u)}\frac{d^{q}\phi (u)}{du^{q}}, \hspace{1cm}
\phi (u):=\frac{1}{\sqrt{2\pi }}\exp\left\{-\frac{u^{2}}{2}\right\},
\end{equation*}
so that
\begin{equation*}
H_{0}(u)=1, \;\; H_{1}(u)=u, H_{2}(u)=u^{2}-1, \;\;
H_{3}(u)=u^{3}-3u, \;\; H_{4}(u)=u^{4}-6u^{2}+3, \dots
\end{equation*}
We refer to \cite{noupebook} for a detailed discussion of Hermite
polynomials, their properties and their ubiquitous role in the analysis of
Gaussian processes. Below we shall also exploit the sequence of (random)
sample polyspectra, which we define as (see, e.g., \cite{CMW}, \cite{MW},
\cite{MW2014}, \cite{PTRF09}, \cite{MR2015})
\begin{equation*}
h_{\ell ;q}:=\int_{\mathbb{S}^{2}}H_{q}(f_{\ell }(x))dx.
\end{equation*}
It is readily checked that $h_{\ell ;0}=4\pi $ and $h_{\ell ;1}=0,$ for all $\ell$;
we also have $\mathbb{E}\left[ h_{\ell ;q}\right] =0$, for all
$q=1,2,\dots$ As far as variances are concerned, we have that (see \cite{MW},
\cite{MW2014})
\begin{equation*}
\text{Var} \left( h_{\ell ;2}\right) =(4\pi )^{2}\frac{2}{2\ell +1}, \hspace{0.7cm}
\text{Var}\left( h_{\ell ;4}\right) =\frac{576\log \ell }{\ell ^{2}}+O(\ell^{-2}), \hspace{0.7cm}
\text{Var}\left( h_{\ell ;q}\right) =\frac{c_{q}}{\ell ^{2}}+o(\ell^{-2}),
\end{equation*}
for $q=3, 5, 6\dots$, where
\begin{equation*}
c_{q}:=\int_{0}^{\infty }J_{0}(\psi )^{q}\psi d\psi, \hspace{1cm} J_{0}(\psi
)=\sum_{k=0}^{\infty }\frac{(-1)^{k}x^{2k}}{(k!)^{2}2^{2k}},
\end{equation*}
and $J_{0}(.)$ is the usual Bessel function of the first kind.

\subsection{Main results}

Our first main result in this paper is to show that the number of critical
points and the sample trispectrum $\left\{ h_{\ell ;4}\right\} $ are
asymptotically fully correlated: as $\ell \rightarrow \infty$
\begin{equation}
\lim_{\ell \rightarrow \infty }\rho^{2}(\mathcal{N}_{\ell }^{c},h_{\ell
;4}):=\lim_{\ell \rightarrow \infty }\frac{\text{Cov}^{2}(\mathcal{N}_{\ell
}^{c},h_{\ell ;4})}{\text{Var}(\mathcal{N}_{\ell }^{c})\text{Var}(h_{\ell ;4})}=1.
\label{giugno}
\end{equation}
In fact, our result is sharper than that. Recall first that the variance for
the total number of critical points was computed in \cite{CW} to be
asymptotic to
\begin{equation*}
\text{Var}(\mathcal{N}_{\ell }^{c})=\frac{\ell ^{2}\log \ell }{3^{3}\pi ^{2}}
+O(\ell ^{2}), \;\; \text{ as }\ell \rightarrow 0.
\end{equation*}
Let us now introduce the random sequence
\begin{equation*}
\mathcal{A}_{\ell }=-\frac{\lambda _{\ell }}{2^{3}3^{2}\sqrt{3}\pi }\int_{
\mathbb{S}^{2}}H_{4}(f_{\ell }(x))dx=-\frac{\lambda _{\ell }}{2^{3}3^{2}
\sqrt{3}\pi }h_{\ell ;4},
\end{equation*}
for which it is readily seen that
\begin{equation*}
\mathbb{E}\left[ \mathcal{A}_{\ell }\right] =0, \;\;\;\; \lim_{\ell
\rightarrow \infty }\frac{\text{Var}(\mathcal{N}_{\ell }^{c})}{\text{Var}(\mathcal{A}
_{\ell })}=1,
\end{equation*}
because
\begin{equation*}
\text{Var}(\mathcal{A}_{\ell }) =\frac{\lambda _{\ell }^{2}}{2^{6}3^{5}\pi ^{2}}
\text{Var}(h_{\ell ;4})= \frac{\lambda _{\ell }^{2}}{2^{6}3^{5}\pi ^{2}}\left\{ \frac{576\log \ell
}{\ell ^{2}}+O(\ell^{-2})\right\} =\frac{\ell ^{2}\log \ell }{
3^{3}\pi ^{2}}+O(\ell ^{2}), \;\; \text{ as }\ell \rightarrow 0.
\end{equation*}
It is convenient to write
\begin{equation*}
\widetilde{\mathcal{A}}_{\ell }=\frac{\mathcal{A}_{\ell }}{\sqrt{\text{Var}(
\mathcal{A}_{\ell })}}.
\end{equation*}
We can now formulate the following
\begin{theorem}
\label{tisone1} As $\ell \rightarrow \infty $
\begin{equation*}
\rho(\mathcal{N}_{\ell }^{c},\mathcal{A}_{\ell })=\frac{\rm{Cov}(\mathcal{N}
_{\ell }^{c},\mathcal{A}_{\ell })}{\sqrt{\rm{Var}(\mathcal{N}_{\ell }^{c})\rm{Var}(
\mathcal{A}_{\ell })}}\rightarrow 1,
\end{equation*}
and hence
\begin{equation*}
\frac{\mathcal{N}_{\ell }^{c}-\mathbb{E}\left[ \mathcal{N}_{\ell }^{c}\right]
}{\sqrt{\rm{Var}(\mathcal{N}_{\ell }^{c})}}=\widetilde{\mathcal{A}}_{\ell
}+o_{p}(1).
\end{equation*}
\end{theorem}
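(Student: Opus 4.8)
The plan is to obtain the Wiener--Itô chaos expansion of $\mathcal{N}_{\ell}^{c}$ and to identify its dominant chaotic component with the sample trispectrum. First I would represent the number of critical points through a Kac--Rice type formula: in a local orthonormal frame the gradient $\nabla f_{\ell}$ and the Hessian $\nabla^{2} f_{\ell}$ are jointly Gaussian, with covariances determined by $P_{\ell}(\cos d)$ and its derivatives, and after the usual approximation of the Dirac mass one writes
\[
\mathcal{N}_{\ell}^{c}=\int_{\mathbb{S}^{2}}\delta_{0}\big(\nabla f_{\ell}(x)\big)\,\big|\det \nabla^{2} f_{\ell}(x)\big|\,dx .
\]
Expanding the integrand in Hermite polynomials of the suitably normalised field and its first and second derivatives yields $\mathcal{N}_{\ell}^{c}=\sum_{q\ge 0}\mathcal{N}_{\ell}^{c}[q]$, where $\mathcal{N}_{\ell}^{c}[q]$ is the projection onto the $q$-th Wiener chaos. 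By the symmetry $f_{\ell}\mapsto -f_{\ell}$ the odd projections vanish, and $\mathcal{N}_{\ell}^{c}[0]=\mathbb{E}[\mathcal{N}_{\ell}^{c}]$.

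The crucial point, analogous to Berry's cancellation for the nodal length, is to show that the second chaotic projection does not contribute at leading order. A generic second-chaos term would couple to a quadratic polyspectrum whose variance is of order $\ell^{-1}$; after the two powers of $\lambda_{\ell}\sim\ell^{2}$ carried by the Hessian this would produce a variance of order $\ell^{3}$, far exceeding the true order $\ell^{2}\log\ell$. I would therefore compute the Hermite coefficients of $\big|\det \nabla^{2} f_{\ell}\big|\,\delta_{0}(\nabla f_{\ell})$ explicitly and verify that, after integrating over the sphere and exploiting isotropy, the leading second-order contribution cancels, leaving $\Var(\mathcal{N}_{\ell}^{c}[2])=O(\ell^{2})$. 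This forces the fourth chaos to be the leading term.

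Next I would evaluate $\mathcal{N}_{\ell}^{c}[4]$, which is a linear combination of integrals of degree-four Hermite products in $f_{\ell}$, $\nabla f_{\ell}$ and $\nabla^{2}f_{\ell}$. The pure-field term $\int_{\mathbb{S}^2}H_{4}(f_{\ell})=h_{\ell;4}$ acquires a coefficient of order $\lambda_{\ell}$ from the expectation of the Hessian factors, and this is precisely the constant $-\lambda_{\ell}/(2^{3}3^{2}\sqrt{3}\,\pi)$ defining $\mathcal{A}_{\ell}$. Writing $\mathcal{N}_{\ell}^{c}[4]=\mathcal{A}_{\ell}+R_{\ell}$, the goal is to show that the remainder, which collects the terms involving derivatives, satisfies $\Var(R_{\ell})=O(\ell^{2})=o(\ell^{2}\log\ell)$, so that the logarithmic divergence resides entirely in the trispectrum component; this is consistent with $\Var(\mathcal{A}_{\ell})\sim \ell^{2}\log\ell/(3^{3}\pi^{2})$ and the known variance \eqref{varcp}. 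Finally, a standard bound on the higher-order projections gives $\Var\big(\sum_{q\ge 6}\mathcal{N}_{\ell}^{c}[q]\big)=o(\ell^{2}\log\ell)$.

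Combining these steps yields
\[
\mathcal{N}_{\ell}^{c}-\mathbb{E}[\mathcal{N}_{\ell}^{c}]=\mathcal{A}_{\ell}+o_{p}\big(\sqrt{\ell^{2}\log\ell}\big),
\]
whence, dividing by $\sqrt{\Var(\mathcal{N}_{\ell}^{c})}$, both the full correlation $\rho(\mathcal{N}_{\ell}^{c},\mathcal{A}_{\ell})\to 1$ and the approximation $(\mathcal{N}_{\ell}^{c}-\mathbb{E}[\mathcal{N}_{\ell}^{c}])/\sqrt{\Var(\mathcal{N}_{\ell}^{c})}=\widetilde{\mathcal{A}}_{\ell}+o_{p}(1)$ follow at once. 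The main obstacle I anticipate is the explicit evaluation of the fourth-order projection together with the proof that its non-trispectrum part has variance only $O(\ell^{2})$: this rests on delicate asymptotics of integrals of products of Legendre polynomials and their derivatives, where the $\log\ell$ divergence must be isolated and shown to be carried solely by the $h_{\ell;4}$ component, while the analogous control of the second chaos requires the Berry-type cancellation to be made quantitative.
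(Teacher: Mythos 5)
Your overall skeleton (Kac--Rice representation, chaos expansion, vanishing odd chaoses, fourth chaos dominant) matches the framework of the paper, but the step on which your whole argument turns rests on a misconception. You write that ``the pure-field term $\int_{\mathbb{S}^2}H_4(f_\ell)=h_{\ell;4}$'' appears inside $\mathcal{N}_\ell^c[4]$ with precisely the coefficient $-\lambda_\ell/(2^33^2\sqrt{3}\,\pi)$ defining $\mathcal{A}_\ell$, and that the remainder $R_\ell$, ``which collects the terms involving derivatives,'' satisfies $\mathrm{Var}(R_\ell)=O(\ell^2)$. Neither claim can be made to work. The Kac--Rice integrand $|\det\nabla^2 f_\ell(x)|\,{\boldsymbol\delta}(\nabla f_\ell(x))$ is a function of the gradient and Hessian alone --- $f_\ell(x)$ itself never enters --- so the fourth chaos, displayed in \eqref{4chaos}, is spanned entirely by products $H_{q_1}(Y_1)\cdots H_{q_5}(Y_5)$ of Hermite polynomials in the normalized first and second derivatives: there is no $h_{\ell;4}$ term to read off, and no single term of the expansion carries the constant defining $\mathcal{A}_\ell$. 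Moreover, the derivative terms are \emph{not} negligible: the terms $\lambda_\ell\int H_4(Y_2)\,dx$, $\lambda_\ell\int H_4(Y_5)\,dx$ and $\lambda_\ell\int H_2(Y_2)H_2(Y_5)\,dx$ each have variance of order $\ell^2\log\ell$ (by Lemmas \ref{domin1}--\ref{domin3} combined with Cauchy--Schwarz against $h_{\ell;4}$), i.e.\ of the same order as $\mathrm{Var}(\mathcal{N}_\ell^c)$ itself. So a decomposition $\mathcal{N}_\ell^c[4]=\mathcal{A}_\ell+R_\ell$ with ``$R_\ell=$ the derivative part'' of variance $O(\ell^2)$ is false; what is true --- and is exactly the nontrivial content of the theorem, emphasized in point (ii) of the Remark following Theorem \ref{tisone2} --- is that the specific linear combination of derivative terms in \eqref{4chaos} happens to be asymptotically fully correlated with $h_{\ell;4}$, and this must be \emph{proved} via covariance asymptotics, not read off from the expansion.

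The paper's proof supplies precisely what your plan is missing, and in a way that also removes your other soft spots. It computes $\mathrm{Cov}(\mathcal{N}_\ell^c,h_{\ell;4})$ directly: orthogonality of Wiener chaoses reduces the covariance to the fourth-order component of the Kac--Rice integrand, isotropy reduces the spherical integral to the equator, the dominant covariances are evaluated by Legendre asymptotics (Lemmas \ref{domin1}--\ref{domin3}), the odd and subdominant terms are shown to be zero or $O(\ell^{-2})$ (Section \ref{oddt} and Lemmas \ref{subdomin1}--\ref{odd3}), and the three projection coefficients $k_2$, $k_5$, $h_{25}$ (Proposition \ref{coefficients}) combine --- through the cancellation $-12+18-7=-1$ --- to yield $\mathrm{Cov}(\mathcal{N}_\ell^c,\mathcal{A}_\ell)\sim\ell^2\log\ell/(3^3\pi^2)$. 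Full correlation then follows by comparing with the variances already known from \cite{CW} and \cite{MW2014}; no direct control of the chaoses of order $q\geq 6$ is ever needed. By contrast, your appeal to ``a standard bound on the higher-order projections'' is not standard at all: in the paper that bound is a \emph{consequence} of the theorem (obtained by subtracting the fourth-chaos variance from the known total variance), not an input. Note also that the second chaos vanishes identically for the total count, by \cite{CM2018b}, rather than merely being $O(\ell^2)$ after a Berry-type cancellation. To repair your argument, you would replace the term-by-term identification of $\mathcal{A}_\ell$ by exactly these covariance computations, at which point your route essentially collapses onto the paper's.
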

As a consequence of the previous theorem, for $\ell \rightarrow \infty$, we
have that \eqref{giugno} holds, so that the total number of critical points is
fully correlated in the limit with $\left\{ h_{\ell ;4}\right\}$. The
limiting distribution of $\left\{ h_{\ell ;4}\right\} $ was already studied
in \cite{MW2014}, where it was shown that a (quantitative version of the)
Central Limit Theorem holds. Our next main result hence follows immediately;
recall first that the Wasserstein distance between the probability
distributions of two random variables $(X,Y)$ is defined by
\begin{equation*}
d_{W}(X,Y):=\sup_{h\in {\rm Lip}(1)}|\mathbb{E}h(X)-\mathbb{E}h(Y)|,
\end{equation*}
where
\begin{equation*}
{\rm Lip} (1):=\left\{ h:\mathbb{R}\rightarrow \mathbb{R}\text{ such that }
\left\vert \frac{h(x)-h(y)}{x-y}\right\vert \leq 1\text{ for all }x\neq
y\right\}.
\end{equation*}
\begin{theorem}
\label{tisone2} As $\ell \rightarrow \infty ,$ for $Z$ a
standard Gaussian variable, we have that
\begin{equation*}
\lim_{\ell \rightarrow \infty }d_{W}\left(\frac{\mathcal{N}_{\ell }^{c}-\mathbb{E}
\left[ \mathcal{N}_{\ell }^{c}\right] }{\sqrt{{\rm Var}\left( \mathcal{N}_{\ell
}^{c}\right) }},Z\right)=0,
\end{equation*}
and hence
\begin{equation*}
\frac{\mathcal{N}_{\ell }^{c}-\mathbb{E}\left[ \mathcal{N}_{\ell }^{c}\right]
}{\sqrt{{\rm Var}\left( \mathcal{N}_{\ell }^{c}\right) }}\rightarrow _{d}Z.
\end{equation*}
\end{theorem}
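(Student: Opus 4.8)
The plan is to derive Theorem~\ref{tisone2} directly from Theorem~\ref{tisone1} combined with the quantitative Central Limit Theorem for the sample trispectrum proved in \cite{MW2014}. The first observation is that $\widetilde{\mathcal A}_\ell$ is, up to sign, exactly the standardized sample trispectrum: since $\mathcal A_\ell$ is a negative deterministic multiple of $h_{\ell;4}$, the normalizing constant cancels in the ratio defining $\widetilde{\mathcal A}_\ell$, so that
\[
\widetilde{\mathcal A}_\ell = -\frac{h_{\ell;4}}{\sqrt{\Var(h_{\ell;4})}}.
\]
The results of \cite{MW2014} give $d_W\big(h_{\ell;4}/\sqrt{\Var(h_{\ell;4})},\,Z\big)\to 0$; since the standard Gaussian law is symmetric, $-Z\eqdistr Z$, and therefore $d_W(\widetilde{\mathcal A}_\ell, Z)\to 0$ as well.

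Next, I would write $X_\ell:=(\mathcal N_\ell^c-\E[\mathcal N_\ell^c])/\sqrt{\Var(\mathcal N_\ell^c)}$ and note that both $X_\ell$ and $\widetilde{\mathcal A}_\ell$ are centred with unit variance and are defined on the same probability space as functionals of $f_\ell$. The full-correlation statement of Theorem~\ref{tisone1}, namely $\rho(\mathcal N_\ell^c,\mathcal A_\ell)\to 1$, then upgrades to $L^2$ closeness, since
\[
\E\big[(X_\ell-\widetilde{\mathcal A}_\ell)^2\big]=\Var(X_\ell)+\Var(\widetilde{\mathcal A}_\ell)-2\rho(\mathcal N_\ell^c,\mathcal A_\ell)=2-2\rho(\mathcal N_\ell^c,\mathcal A_\ell)\to 0.
\]

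Finally, I would bound the Wasserstein distance between $X_\ell$ and $\widetilde{\mathcal A}_\ell$ through this natural coupling: for any $h\in{\rm Lip}(1)$,
\[
|\E h(X_\ell)-\E h(\widetilde{\mathcal A}_\ell)|\le \E\big|h(X_\ell)-h(\widetilde{\mathcal A}_\ell)\big|\le \E\big|X_\ell-\widetilde{\mathcal A}_\ell\big|\le \sqrt{\E\big[(X_\ell-\widetilde{\mathcal A}_\ell)^2\big]},
\]
where the last two steps use the $1$-Lipschitz property and the Cauchy--Schwarz inequality. Taking the supremum over $h$ gives $d_W(X_\ell,\widetilde{\mathcal A}_\ell)\to 0$, and the triangle inequality then yields
\[
d_W(X_\ell,Z)\le d_W(X_\ell,\widetilde{\mathcal A}_\ell)+d_W(\widetilde{\mathcal A}_\ell,Z)\to 0,
\]
which is the assertion; convergence in distribution follows at once since Wasserstein convergence implies it. There is no substantial obstacle here, the whole difficulty having been absorbed into Theorem~\ref{tisone1}; the only delicate point is that one should invoke the $L^2$ convergence underlying that theorem rather than the bare $o_p(1)$ statement, since Wasserstein-$1$ control of the remainder requires the vanishing of a first absolute moment, and not merely convergence in probability.
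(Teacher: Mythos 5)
Your proposal is correct and follows essentially the same route as the paper: Theorem \ref{tisone1} combined with the quantitative CLT for $h_{\ell;4}$ from \cite{MW2014} and the triangle inequality for $d_W$. In fact you supply two details the paper leaves implicit --- the sign cancellation $\widetilde{\mathcal{A}}_\ell=-h_{\ell;4}/\sqrt{\Var(h_{\ell;4})}$ handled via symmetry of $Z$, and the upgrade from $\rho\to 1$ to $d_W(X_\ell,\widetilde{\mathcal{A}}_\ell)\le\sqrt{2-2\rho}\to 0$ via the natural coupling --- which is exactly the right way to justify the first term in the paper's triangle inequality vanishing.
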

\begin{proof}
It was shown in \cite{MW2014} that
\begin{equation*}
\lim_{\ell \rightarrow \infty }d_{W}\left(\frac{h_{\ell ;4}}{\sqrt{{\rm Var}\left(
h_{\ell ;4}\right) }},Z\right)=0;
\end{equation*}
the result then follows from Theorem \ref{tisone1} and the triangle
inequality
\begin{align*}
\lim_{\ell \rightarrow \infty }d_{W}\left(\frac{\mathcal{N}_{\ell }^{c}-\mathbb{E}
\left[ \mathcal{N}_{\ell }^{c}\right] }{\sqrt{\text{Var}\left( \mathcal{N}_{\ell
}^{c}\right) }},Z\right) &\leq \lim_{\ell \rightarrow \infty }d_{W}\left(\frac{
\mathcal{N}_{\ell }^{c}-\mathbb{E}\left[ \mathcal{N}_{\ell }^{c}\right] }{
\sqrt{\text{Var}\left( \mathcal{N}_{\ell }^{c}\right) }},\frac{\mathcal{A}_{\ell }
}{\sqrt{\text{Var}\left( \mathcal{A}_{\ell }\right) }}\right)+\lim_{\ell \rightarrow
\infty }d_{W}\left(\frac{\mathcal{A}_{\ell }}{\sqrt{\text{Var}\left( \mathcal{A}_{\ell
}\right) }},Z\right) \\
&=\lim_{\ell \rightarrow \infty }d_{W}\left(\frac{\mathcal{N}_{\ell }^{c}-
\mathbb{E}\left[ \mathcal{N}_{\ell }^{c}\right] }{\sqrt{\text{Var}\left( \mathcal{N
}_{\ell }^{c}\right) }},\frac{\mathcal{A}_{\ell }}{\sqrt{\text{Var}\left(
\mathcal{A}_{\ell }\right) }}\right)+\lim_{\ell \rightarrow \infty }d_{W}\left(\frac{
h_{\ell ;4}}{\sqrt{\text{Var}\left( h_{\ell ;4}\right) }},Z\right) \\
&=0.
\end{align*}
\end{proof}
\begin{remark}
The previous theorems include actually two separate results, namely:
\begin{enumerate}[i)]
\item  the asymptotic behaviour of the total number of critical points is
dominated by its projection on the fourth-order chaos term (see Section \ref{caos});

\item the projection on the fourth-order chaos can be expressed simply in terms
of the fourth-order Hermite polynomial, evaluated on the eigenfunctions $\left\{
f_{\ell }\right\}$, without the need to compute Hermite polynomials
evaluated on the first and second derivatives of $\left\{f_{\ell }\right\}$, despite the fact that the
latter do appear in the Kac-Rice formula and they are not negligible in
terms of asymptotic variance.
\end{enumerate}

As we shall discuss in the following section, both these findings have
analogous counterparts in the behaviour of the boundary and nodal
length, as investigated, i.e., in \cite{MRW}.
\end{remark}

\subsection{Discussion: correlation between critical points and nodal
length.}

The results in our paper should be compared with a recent stream of
literature which has investigated the relationship between geometric
features of random spherical harmonics and sample polyspectra. The first
results in this area are due to \cite{MW}, which studied the excursion area
of $\left\{ f_{\ell }\right\}$ above a threshold
$u\in \mathbb{R}$ (which we label $\mathcal{L}_{2}(u;\ell
) $), and showed that it is asymptotically dominated (after centering) by a
term of the form $-u\phi (u)h_{\ell ;2}/2;$ in particular, they showed that
\begin{enumerate}[i)]
\item  there is full correlation, in the high-energy limit, between $h_{\ell ;2}$
and the excursion area, for all $u\neq 0$;

\item for $u=0$ (the case of the so-called Defect) this leading term vanishes,
and the asymptotic behaviour is radically different: all the odd-order
chaoses of order greater or equal to 3 are correlated with the excursion
area.
\end{enumerate}
The same pattern of behaviour was later established for the boundary length
$\mathcal{L}_{1}(u;\ell )$ (for $u\neq 0)$ and the Euler
characteristic $\mathcal{L}_{0}(u;\ell )$ (for $u\neq 0,\pm 1,$ see i.e.,
\cite{CM2018} and the references therein), thus covering the behaviour of
all three Lipschitz-Killing Curvatures (see \cite{adlertaylor}). More
explicitly, we have that, as $\ell \rightarrow \infty$, (see i.e., \cite
{CM2018})
\begin{align*}
\mathcal{L}_{0}(u;\ell ) - \mathbb{E}[\mathcal{L}_{0}(u;\ell )] &=\frac{1}{2}\frac{\lambda _{\ell }}{2}
H_{2}(u)H_{1}(u)\phi (u)\frac{1}{2\pi }h_{\ell ;2}+o_{p}(\ell ^{3/2}) \\
&=\frac{1}{2}\frac{\lambda _{\ell }}{2}(u^{3}-u)\phi (u)\frac{1}{2\pi }
h_{\ell ;2}+o_{p}(\ell ^{3/2}), \\
\mathcal{L}_{1}(u;\ell ) - \mathbb{E} [\mathcal{L}_{1}(u;\ell )] &=\frac{1}{2}\sqrt{\frac{\lambda _{\ell }}{2}}
\sqrt{\frac{\pi }{8}}H_{1}^{2}(u)\phi (u)h_{\ell ;2}+o_{p}(\ell^{1/2}) \\
&=\frac{1}{2}\sqrt{\frac{\lambda _{\ell }}{2}}\sqrt{\frac{\pi }{8}}
u^{2}\phi (u)h_{\ell ;2}+o_{p}(\ell^{1/2}), \\
\mathcal{L}_{2}(u;\ell ) - \mathbb{E}[\mathcal{L}_{2}(u;\ell )]&= \frac{1}{2}H_{1}(u)\phi (u)h_{\ell;2}+o_{p}(\ell^{-1/2}) \\
&=\frac{1}{2}u\phi (u)h_{\ell ;2}+o_{p}(\ell^{-1/2}),
\end{align*}
whence
\begin{equation*}
\lim_{\ell \rightarrow \infty }\rho^{2}(h_{\ell ;2};\mathcal{L}_{2}(u;\ell
)) =\lim_{\ell \rightarrow \infty }\rho ^{2}(h_{\ell ;2};\mathcal{L}
_{1}(u;\ell ))=1, \text{ for }u\neq 0,
\end{equation*}
\begin{equation*}
\lim_{\ell \rightarrow \infty } \rho^{2}(h_{\ell ;2};\mathcal{L}_{0}(u;\ell
)) =1,\text{ for }u\neq 0,\pm 1,
\end{equation*}
and
\begin{equation*}
\lim_{\ell \rightarrow \infty }\rho ^{2}(\mathcal{L}_{0}(u;\ell ),\mathcal{L}
_{1}(u;\ell ))=\lim_{\ell \rightarrow \infty }\rho ^{2}(\mathcal{L}
_{0}(u;\ell ),\mathcal{L}_{2}(u;\ell ))=\lim_{\ell \rightarrow \infty }\rho
^{2}(\mathcal{L}_{1}(u;\ell ),\mathcal{L}_{2}(u;\ell ))=1, \text{ for }u\neq
0,\pm 1.
\end{equation*}
Loosely speaking, it can be concluded that these three Lipschitz-Killing
curvatures are asymptotically proportional to $h_{\ell ;2}$ in the
high-energy limit for $u\neq 0$ (and also for $u\neq \pm 1$ in the case of Euler
characteristics), and thus they are fully correlated at different thresholds
and among themselves.

These results were extended in \cite{CM2018b} to critical values over the
interval $I$. More precisely, let $I\subseteq \mathbb{R}$ be any interval in
the real line; we are interested in the number of critical points of $
f_{\ell }$ with value in $I$:
\begin{equation*}
\mathcal{N}_{\ell }^{c}(I)=\#\{x\in \mathbb{S}^{2}:f_{\ell }(x)\in I,\nabla
f_{\ell }(x)=0\}.
\end{equation*}
For the expectation, it was shown in \cite{CMW} that for every interval
$I\subseteq \mathbb{R}$ we have, as $\ell \rightarrow \infty$,
\begin{equation*}
\mathbb{E}[\mathcal{N}_{\ell }^{c}(I)]=\frac{2}{\sqrt{3}}\lambda _{\ell
}\int_{I}\pi _{1}^{c}(t)dt+O(1), \hspace{1cm} \pi _{1}^{c}(t)=\frac{\sqrt{3}}{
\sqrt{8\pi }}(2e^{-t^{2}}+t^{2}-1)e^{-\frac{t^{2}}{2}};
\end{equation*}
moreover, for $I$ such that
\begin{equation*}
\nu ^{c}(I):=\left[ \int_{I}p_{3}^{c}(t)dt\right] ^{2}\neq 0, \hspace{1cm}
p_{3}^{c}(t)=\frac{1}{\sqrt{8\pi }}e^{-\frac{3}{2}t^{2}}\left[
2-6t^{2}-e^{t^{2}}(1-4t^{2}+t^{4})\right],
\end{equation*}
we have that
\begin{equation*}
\lim_{\ell \rightarrow \infty }\rho ^{2}(h_{\ell ;2};\mathcal{N}_{\ell
}^{c}(I))=1.
\end{equation*}
More precisely, it was shown in \cite{CM2018b}, that
\begin{equation*}
\mathcal{N}_{\ell }^{c}(I)-\mathbb{E}[\mathcal{N}_{\ell }^{c}(I)]=\left[ \ell ^{3/2}\int_{I}p_{3}^{c}(t)dt\right]
\times \frac{h_{\ell ;2}}{\sqrt{\text{Var}(h_{\ell ;2})}}+o_{p}\left(\sqrt{\text{Var}(\mathcal{N
}_{\ell }^{c}(I))}\right), \text{ as }\ell \rightarrow \infty.
\end{equation*}
We call $I$ nondegenerate if and only if
$$\int_I p_3^c(t) d t \ne 0.$$
For instance semi-intervals $I=[u,\infty )$ with $u\neq 0$ are nondegenerate. As a
consequence, for the same range of values of $u$, we have that
\begin{equation*}
\lim_{\ell \rightarrow \infty }\rho ^{2}(\mathcal{L}_{a}(u;\ell ),\mathcal{N}
_{\ell }^{c}([u,\infty )))=1, \text{ for }a=0,1,2.
\end{equation*}
For $I=[0,\infty )$ or $\mathbb{R}$ (corresponding to the total number of
critical points), the leading constant $\nu ^{c}(I)$ vanishes, and,
accordingly, the order of magnitude of the variance is smaller than $\ell
^{3}$; indeed, as $\ell \rightarrow \infty $ (see \cite{CW}),
\begin{equation*}
\text{Var}(\mathcal{N}_{\ell }^{c})=\frac{1}{3^{3}\pi ^{2}}\ell ^{2}\log
\ell +O(\ell ^{2}).
\end{equation*}
This behaviour is again similar to what was found for $\mathcal{L}
_{1}(0;\ell )$ (the {\it nodal length} of random spherical harmonics), for which
it was shown in \cite{Wig} that
\begin{equation*}
\text{Var}(\mathcal{L}_{1}(0;\ell ))=\frac{1}{128}\log \ell +O(1);
\end{equation*}
actually our expression here differs from the one in \cite{Wig} by a factor $
{1}/{4}$, because $\mathcal{L}_{1}(0;\ell )$ is equivalent to half the
{\it nodal length} of random spherical harmonics considered in that paper. It
was later shown in \cite{MRW} that the following asymptotic equivalence
holds:
\begin{equation*}
\mathcal{L}_{1}(0;\ell )- \mathbb{E}[\mathcal{L}_{1}(0;\ell )] =-\frac{1}{4}\sqrt{\frac{\lambda _{\ell }}{2}}\frac{1}{4!}h_{\ell ;4}+o_{p}(\sqrt{\log \ell }),
\end{equation*}
consistent with the computation of the variance in \cite{Wig}, because (see
\cite{MW2014})
\begin{equation*}
\text{Var}(h_{\ell ;4})=\frac{576\log \ell }{\ell ^{2}}+O(\ell ^{-2}), \text{ as } \ell \rightarrow \infty.
\end{equation*}
In particular, we have that
\begin{equation*}
\lim_{\ell \rightarrow \infty }\rho ^{2}(\mathcal{L}_{1}(0;\ell );h_{\ell;4})=1.
\end{equation*}
Our results in this paper show that the asymptotic behaviour of the total
number of critical points (i.e. $I=\mathbb{R}$) is dominated by
exactly the same component as the nodal length, and indeed
\begin{equation*}
\lim_{\ell \rightarrow \infty }\rho ^{2}(\mathcal{L}_{1}(0;\ell );\mathcal{N}
_{\ell }^{c})=\lim_{\ell \rightarrow \infty }\rho ^{2}(\mathcal{N}_{\ell
}^{c};h_{\ell ;4})=1.
\end{equation*}
Summing up, the literature so far has established the full correlation of
Lipschitz-Killing curvatures and critical values among themselves and with
the sequence $\left\{ h_{\ell ;2}\right\} $ for nondegenerate values of the
threshold parameter $u$. Here we show that in the degenerate cases ($
u=-\infty, 0$ for critical points) full-correlation still exists between
nodal length and critical points, as both are proportional to the sample
trispectrum $h_{\ell ;4}=\int_{\mathbb{S}^{2}}H_{4}(f_{\ell }(x))dx$. The
correlation is positive, which is to say that the realization that
corresponds to a higher number of critical points are those where longer
nodal lines are going to be observed. Heuristically, it can be conjectured
that a higher number of critical points will typically correspond to a
higher number of nodal components, and hence nodal length will be as well
larger than average. One cautious note is needed here: whereas the
correlation converges to unity, it does so only at a logarithmic rate, so it
may not be simple to visualize this effect by simulations with values of $
\ell $ in the order of a few hundreds. On the contrary, the correlation for
values of the threshold $u$ different from zero occurs with rate $\ell ^{-1}$
and shows up very neatly in simulations.

A number of other papers have investigated the geometry of random
eigenfunctions on the sphere and on the torus in the last few years. Among
these, we recall \cite{MW} and \cite{MW2014} for the excursion area and the
Defect; \cite{KKW}, \cite{MPRW2015}, \cite{Cammarota2018} and \cite{BenatarMaffucci} for the nodal
length/volume of arithmetic random waves; \cite{dalmao} for the number
of intersections of random eigenfunctions; \cite{MRW} for the nodal length
of random spherical harmonics; \cite{npr} for the nodal length of Berry's
random waves on the plane; \cite{Rudnick} and \cite{RudnickYesha} for nodal
intersections; \cite{BMW}, \cite{Todino1} and \cite{Todino2} for fluctuations
over subsets of the torus and of the sphere. Zeroes of
random trigonometric polynomials have been considered, for instance, by \cite{angstdalmao}, \cite{angstuniv}, \cite{bally17} and the references therein.

\subsection{Plan of the paper}

In Section \ref{KacRice} we present some background material on Kac-Rice
techniques, Wiener chaos expansions and the relevant covariant matrices for
our (covariant) gradient and Hessian. The proof of our main result is given
in Section \ref{DimMain}, where we show that the total number of critical
points is asymptotically fully correlated with the integral of the
fourth-Hermite polynomial evaluated on the eigenfunctions themselves.
The projection coefficients on Wiener chaoses that we shall need are only three, and their computation is
collected in Section \ref{EvaCoeff}.  In Section \ref{oddt} we consider the terms in the chaos expansion with odd index Hermite polynomials.
The technical computations are in Appendix \ref{Appendix}.

\section{Kac-Rice Formula and the Chaos Expansion $\label{KacRice}$} \label{caos}

As discussed in \cite{CMW}, \cite{CMW-EPC}, \cite{CM2018} and \cite{CM2018b},
by means of Kac-Rice formula, the number of critical points can be
formally written as
\begin{equation*}
\mathcal{N}_{\ell }^{c}=\int_{\mathbb{S}^{2}}|\text{det}\nabla ^{2}f_{\ell}(x)|
{\boldsymbol \delta} (\nabla f_{\ell }(x))dx,
\end{equation*}
where the identity holds both almost surely (using i.e., the Federer's
coarea formula, see \cite{adlertaylor}), and in the $L^{2}$ sense, i.e.,
\begin{equation*}
\mathcal{N}_{\ell }^{c} = \lim_{\varepsilon \rightarrow 0 } \int_{
\mathbb{S}^{2}}|\text{det}\nabla ^{2}f_{\ell}(x)| {\boldsymbol \delta}_{\varepsilon }
(\nabla f_{\ell }(x))dx =\lim_{\varepsilon \rightarrow 0 }\mathcal{N}_{\ell ,\varepsilon}^{c}
\end{equation*}
for
\begin{equation*}
\mathcal{N}_{\ell ,\varepsilon }^{c}:=\int_{
\mathbb{S}^{2}}|\text{det}\nabla ^{2}f_{\ell }(x)| {\boldsymbol \delta}_{\varepsilon
}(\nabla f_{\ell }(x))dx, \hspace{1cm} {\boldsymbol \delta}_{\varepsilon }(.):=\frac{1}{
(2\varepsilon)^2 }\mathbb{I}_{[-\varepsilon ,\varepsilon ]^2}( \cdot , \cdot ).
\end{equation*}
The validity of this limit, in the $L^{2}(\Omega )$ sense, was shown in \cite
{CM2018}, \cite{CM2018b}. The approach for the proof is to start from
the Wiener chaos expansion
\begin{equation}
\mathcal{N}_{\ell }^{c}=\sum_{q=0}^{\infty }\text{Proj}[\mathcal{N}_{\ell
}^{c}|q]=:\sum_{q=0}^{\infty }\mathcal{N}_{\ell }^{c}[q],
\label{chaosexp}
\end{equation}
where $\left\{ \mathcal{N}_{\ell }^{c}[q]\right\} $ denotes the \emph{chaos-component of order q},
or equivalently the projection of $\mathcal{N}_{\ell }^{c}$ on the $q$th order chaos components, which we shall describe
below. In order to define and compute more explicitly these chaos components, let
us introduce the differential operators
\begin{align*}
\partial _{1;x} =\left. \frac{\partial }{\partial \theta }\right\vert
_{\theta =\theta _{x},\varphi =\varphi _{x}}, \hspace{1cm} \partial _{2;x}=\left.
\frac{1}{\sin \theta }\frac{\partial }{\partial \varphi }\right\vert
_{\theta =\theta _{x},\varphi =\varphi _{x}},
\end{align*}
\begin{align*}
\partial _{11;x} =\left. \frac{\partial ^{2}}{\partial \theta ^{2}}
\right\vert _{\theta =\theta _{x},\varphi =\varphi _{x}}, \hspace{1cm} \partial
_{12;x}=\left. \frac{1}{\sin \theta }\frac{\partial^{2}}{\partial \theta
\partial \varphi }\right\vert_{\theta =\theta _{x}, \varphi =\varphi _{x}},\hspace{1cm}
\partial _{22;x}=\left. \frac{1}{\sin ^{2}\theta }\frac{\partial ^{2}
}{\partial \varphi ^{2}}\right\vert _{\theta =\theta _{x},\varphi =\varphi
_{x}}.
\end{align*}
Covariant gradient and Hessian follow the standard definitions, discussed
for instance in \cite{CM2018}; here we simply recall that
\begin{align*}
\nabla f_{\ell }(x) &=(\partial_{1}f_{\ell }(x),\partial_{2}f_{\ell }(x)), \\
\nabla ^{2}f_{\ell }(x) &=\left(
\begin{array}{cc}
\partial _{11}f_{\ell }(x) & \partial _{12}f_{\ell }(x)-\cot \theta
_{x}\partial _{2}f_{\ell }(x) \\
\partial_{12}f_{\ell }(x)-\cot \theta _{x}\partial_{2}f_{\ell }(x) &
\partial_{22}f_{\ell }(x)+\cot \theta_{x}\partial_{1}f_{\ell }(x)
\end{array}
\right), \\
vec \nabla ^{2}f_{\ell }(x) &=\left(\partial _{11}f_{\ell }(x),\partial
_{12}f_{\ell }(x)-\cot \theta _{x}\partial _{2}f_{\ell }(x),\partial
_{22}f_{\ell }(x)+\cot \theta _{x}\partial _{1}f_{\ell }(x)\right).
\end{align*}
We can then introduce the $5\times 1$ vector $(\nabla f_{\ell }(x),vec\nabla
^{2}f_{\ell }(x))$; its covariance matrix $\sigma _{\ell }$ is constant with respect to $x$
and it is computed in \cite{CM2018b}. It can be written in the partitioned form
\begin{equation*}
\sigma _{\ell}=\left(
\begin{array}{cc}
a_{\ell } & b_{\ell } \\
b_{\ell }^{T} & c_{\ell }
\end{array}
\right),
\end{equation*}
where the superscript $T$ denotes transposition, and
\begin{equation*}
a_{\ell }=\left(
\begin{array}{cc}
\frac{\lambda _{\ell }}{2} & 0 \\
0 & \frac{\lambda _{\ell }}{2}
\end{array}
\right) ,\hspace{1cm}b_{\ell }=\left(
\begin{array}{ccc}
0 & 0 & 0 \\
0 & 0 & 0
\end{array}
\right),\hspace{1cm}
c_{\ell }=\frac{\lambda _{\ell }^{2}}{8}\left(
\begin{array}{ccc}
3-\frac{2}{\lambda _{\ell }} & 0 & 1+\frac{2}{\lambda _{\ell }} \\
0 & 1-\frac{2}{\lambda _{\ell }} & 0 \\
1+\frac{2}{\lambda _{\ell }} & 0 & 3-\frac{2}{\lambda _{\ell }}
\end{array}
\right).
\end{equation*}
\noindent Let us recall that the \emph{Cholesky decomposition} of a
Hermitian positive-definite matrix $A$ takes the form $A=\Lambda \Lambda
^{T},$ where $\Lambda $ is a lower triangular matrix with real and positive
diagonal entries, and $\Lambda ^{T}$ denotes the conjugate transpose of $
\Lambda $. It is well-known that every Hermitian positive-definite matrix
(and thus also every real-valued symmetric positive-definite matrix) admits
a unique Cholesky decomposition.

By an explicit computation, it is possible to show that the Cholesky
decomposition of $\sigma _{\ell }$ takes the form $\sigma _{\ell }=\Lambda
_{\ell }\Lambda _{\ell }^{t},$ where
\begin{equation*}
\Lambda _{\ell }=\left(
\begin{array}{ccccc}
\frac{\sqrt{\lambda }_{\ell }}{\sqrt{2}} & 0 & 0 & 0 & 0 \\
0 & \frac{\sqrt{\lambda }_{\ell }}{\sqrt{2}} & 0 & 0 & 0 \\
0 & 0 & \frac{\sqrt{\lambda _{\ell }}\sqrt{3\lambda _{\ell }-2}}{2\sqrt{2}}
& 0 & 0 \\
0 & 0 & 0 & \frac{\sqrt{\lambda _{\ell }}\sqrt{\lambda _{\ell }-2}}{2\sqrt{2}
} & 0 \\
0 & 0 & \frac{\sqrt{\lambda _{\ell }}(\lambda _{\ell }+2)}{2\sqrt{2}\sqrt{
3\lambda _{\ell }-2}} & 0 & \frac{\lambda _{\ell }\sqrt{{\lambda _{\ell }-2}}
}{\sqrt{3\lambda _{\ell }-2}}
\end{array}
\right) =:\left(
\begin{array}{ccccc}
\tau _{1} & 0 & 0 & 0 & 0 \\
0 & \tau _{1} & 0 & 0 & 0 \\
0 & 0 & \tau _{3} & 0 & 0 \\
0 & 0 & 0 & \tau _{4} & 0 \\
0 & 0 & \tau _{2} & 0 & \tau _{5}
\end{array}
\right);
\end{equation*}
in the last expression, for notational simplicity we have omitted the
dependence of the $\tau _{i}$s on $\ell $. The matrix is block diagonal,
because under isotropy the gradient components are independent from the
Hessian when evaluated at the same point. We can hence define a $5$-dimensional
standard Gaussian vector $Y(x)=(Y_{1}(x),Y_{2}(x),Y_{3}(x),Y_{4}(x),Y_{5}(x))$
with independent components such that
\begin{eqnarray*}
&&(\partial_{1}f_{\ell }(x),\partial_{2}f_{\ell }(x), \partial _{11}f_{\ell }(x),\partial _{12}f_{\ell }(x)-\cot \theta
_{x}\partial _{2}f_{\ell }(x),\partial _{22}f_{\ell }(x)+\cot \theta
_{x}\partial _{1}f_{\ell }(x)) \\
&&=\Lambda _{\ell }Y(x)\\
&&=\left( \tau _{1}Y_{1}(x),\tau _{1}Y_{2}(x),\tau
_{3}Y_{3}(x),\tau _{4}Y_{4}(x),\tau _{5}Y_{5}(x)+\tau _{2}Y_{3}(x)\right).
\end{eqnarray*}
Note that asymptotically
\begin{equation*}
\tau_{1}\sim \frac{\ell }{\sqrt{2}}, \hspace{0.7cm} \tau _{2}\sim \frac{\ell ^{2}}{
\sqrt{24}}, \hspace{0.7cm} \tau _{3}\sim \sqrt{\frac{3}{8}}\ell ^{2}, \hspace{0.7cm} \tau
_{4}\sim \frac{\ell ^{2}}{\sqrt{8}}, \hspace{0.7cm} \tau _{5}\sim \frac{\ell ^{2}}{\sqrt{3}},
\end{equation*}
where (as usual) $a_{\ell }\sim b_{\ell }$ means that the ratio between the
left- and right-hand side tends to unity as $\ell \rightarrow \infty$. Hence
\begin{align*}
Y_{a}(x) &=\frac{\sqrt{2}}{\sqrt{\lambda _{\ell }}}\partial _{a;x}f_{\ell
}(x), \hspace{1cm} a=1,2, \\
Y_{3}(x) &=\frac{2\sqrt{2}}{\sqrt{\lambda _{\ell }}\sqrt{3\lambda _{\ell }-2
}}\partial _{11;x}f_{\ell }(x), \\
Y_{4}(x) &=\frac{2\sqrt{2}}{\sqrt{\lambda _{\ell }}\sqrt{\lambda _{\ell }-2}
}\partial _{21;x}, \\
Y_{5}(x) &=\frac{\sqrt{3\lambda _{\ell }-2}}{\lambda _{\ell }\sqrt{{\lambda
_{\ell }-2}}}\partial _{22;x}f_{\ell }(x)-\frac{\lambda _{\ell }+2}{
\lambda _{\ell }\sqrt{{\lambda _{\ell }-2}}\sqrt{3\lambda _{\ell }-2}}
\partial _{11;x}f_{\ell }(x).
\end{align*}
Thus we obtain
\begin{align*}
\mathcal{N}_{\ell }^{c}& =\lim_{\varepsilon \to 0} \int_{\mathbb{S}^{2}}|\text{det}\nabla ^{2}f_{\ell
}(x)| {\boldsymbol \delta}_\varepsilon(\nabla f_{\ell }(x))dx \\
& =\lim_{\varepsilon \to 0} \int_{\mathbb{S}^{2}}|\partial _{11;x}f_{\ell }(x)\partial _{22;x}f_{\ell
}(x)-(\partial _{12;x}f_{\ell }(x))^{2}| {\boldsymbol \delta}_\varepsilon (\partial _{1;x}f_{\ell
}(x),\partial _{2;x}f_{\ell }(x))dx \\
& = \lim_{\varepsilon \to 0} \int_{\mathbb{S}^{2}}|\tau _{3}Y_{3}(x)(\tau _{5}Y_{5}(x)+\tau
_{2}Y_{3}(x))-(\tau _{4}Y_{4}(x))^{2}| {\boldsymbol \delta}_\varepsilon (\tau _{1}Y_{1}(x),\tau
_{1}Y_{2}(x))dx \\
& =\lim_{\varepsilon \to 0} \lambda_{\ell }^{2}  \int_{\mathbb{S}^{2}}\left\vert
\frac{\tau _{3}\tau _{5}}{\lambda _{\ell }^{2}}Y_{3}(x)Y_{5}(x)+\frac{\tau
_{2}\tau _{3}}{\lambda _{\ell }^{2}}Y_{3}^{2}(x)-\frac{\tau _{4}^{2}}{
\lambda _{\ell }^{2}}Y_{4}^{2}(x)\right\vert {\boldsymbol \delta}_\varepsilon (\tau_1 Y_{1}(x),\tau_1 Y_{2}(x))dx,
\end{align*}
where
\begin{equation*}
\frac{\tau _{3}\tau _{5}}{\lambda _{\ell }^{2}}\sim \frac{1}{\sqrt{8}}, \hspace{0.7cm}
\frac{\tau _{2}\tau _{3}}{\lambda _{\ell }^{2}}\sim \frac{1}{8}, \hspace{0.7cm}
\frac{\tau _{4}^{2}}{\lambda _{\ell }^{2}}\sim \frac{1}{8}.
\end{equation*}
The $q$th order chaos is the space generated by the $L^{2}$-completion of
linear combinations of the form $H_{q_{1}}(Y_{1})\cdots H_{q_{5}}(Y_{5}),$ with $
q_{1}+q_{2}+ \cdots +q_{5}=q$ (see i.e., \cite{noupebook})$;$ in other words, it
is the linear span of cross-product of Hermite polynomials computed in the
independent random variables $Y_{i},$ $i=1, 2,\dots 5$,  which generate the
gradient and Hessian of $f_{\ell }$. In particular, the $4$th order chaos
can be written in the following form:
\begin{align} \label{4chaos}
\mathcal{N}_{\ell }^{c}[4]&=\lambda_{\ell } \left[ \frac{1}{2!2!}
\sum_{i=2}^{5}\sum_{j=1}^{i-1}h_{ij} \int_{\mathbb{S}
^{2}}H_{2}(Y_{i}(x))H_{2}(Y_{j}(x))dx+\frac{1}{4!}\sum_{i=1}^{5}k_{i}
\int_{\mathbb{S}^{2}}H_{4}(Y_{i}(x))dx \right. \nonumber \\
&\;\; +\left. \frac{1}{3!} \sum_{\stackrel{i,j=1}{i \ne j}}^5 g_{ij} \int_{\mathbb{S}^{2}} H_3(Y_i(x)) H_1(Y_j(x)) d x  + \frac{1}{2} \sum_{\stackrel{i,j,k=1}{i \ne j \ne k}}^5 p_{ijk} \int_{\mathbb{S}^{2}} H_2(Y_i(x)) H_1(Y_j(x))H_1(Y_k(x)) d x   \right. \nonumber \\
&\;\; +\left. \sum_{\stackrel{i,j,k,l=1}{i \ne j \ne k \ne l}}^5 q_{ijkl} \int_{\mathbb{S}^{2}} H_1(Y_i(x)) H_1(Y_j(x)) H_1(Y_k(x))  H_1(Y_l(x)) d x
\right],
\end{align}
where
\begin{align*}
h_{ij}&=\lim_{\varepsilon \rightarrow 0} \lambda_{\ell } \; \mathbb{E}\left[ \left\vert
\frac{\tau _{3}\tau _{5}}{\lambda _{\ell }^{2}}Y_{3}Y_{5}+\frac{\tau
_{2}\tau _{3}}{\lambda _{\ell }^{2}}Y_{3}^{2}-\frac{\tau _{4}^{2}}{\lambda
_{\ell }^{2}}Y_{4}^{2}\right\vert {\boldsymbol \delta}_{\varepsilon
}(\tau_1 Y_{1}, \tau_1Y_{2})H_{2}(Y_{i})H_{2}(Y_{j})\right], \\
k_{i}&=\lim_{\varepsilon \rightarrow 0} \lambda_{\ell }\;  \mathbb{E}\left[ \left\vert
\frac{\tau _{3}\tau _{5}}{\lambda _{\ell }^{2}}Y_{3}Y_{5}+\frac{\tau
_{2}\tau _{3}}{\lambda _{\ell }^{2}}Y_{3}^{2}-\frac{\tau _{4}^{2}}{\lambda
_{\ell }^{2}}Y_{4}^{2}\right\vert {\boldsymbol \delta}_{\varepsilon
}(\tau_1 Y_{1}, \tau_1 Y_{2})H_{4}(Y_{i})\right],\\
g_{ij}&=\lim_{\varepsilon \rightarrow 0} \lambda_{\ell } \; \mathbb{E}\left[ \left\vert
\frac{\tau _{3}\tau _{5}}{\lambda _{\ell }^{2}}Y_{3}Y_{5}+\frac{\tau
_{2}\tau _{3}}{\lambda _{\ell }^{2}}Y_{3}^{2}-\frac{\tau _{4}^{2}}{\lambda
_{\ell }^{2}}Y_{4}^{2}\right\vert {\boldsymbol \delta}_{\varepsilon
}(\tau_1 Y_{1}, \tau_1Y_{2}) H_3(Y_i(x)) H_1(Y_j(x)) \right], \\
p_{ijk}&= \lim_{\varepsilon \rightarrow 0} \lambda_{\ell } \; \mathbb{E}\left[ \left\vert
\frac{\tau _{3}\tau _{5}}{\lambda _{\ell }^{2}}Y_{3}Y_{5}+\frac{\tau
_{2}\tau _{3}}{\lambda _{\ell }^{2}}Y_{3}^{2}-\frac{\tau _{4}^{2}}{\lambda
_{\ell }^{2}}Y_{4}^{2}\right\vert {\boldsymbol \delta}_{\varepsilon
}(\tau_1 Y_{1}, \tau_1Y_{2})H_2(Y_i(x)) H_1(Y_j(x))H_1(Y_k(x))  \right],\\
q_{ijkl}&=\lim_{\varepsilon \rightarrow 0} \lambda_{\ell } \; \mathbb{E}\left[ \left\vert
\frac{\tau _{3}\tau _{5}}{\lambda _{\ell }^{2}}Y_{3}Y_{5}+\frac{\tau
_{2}\tau _{3}}{\lambda _{\ell }^{2}}Y_{3}^{2}-\frac{\tau _{4}^{2}}{\lambda
_{\ell }^{2}}Y_{4}^{2}\right\vert {\boldsymbol \delta}_{\varepsilon
}(\tau_1 Y_{1}, \tau_1Y_{2}) H_1(Y_i(x)) H_1(Y_j(x)) H_1(Y_k(x))  H_1(Y_l(x))\right].
\end{align*}
\noindent The projection coefficients $k_{i}$, $h_{ij}$, $g_{ij}$, $p_{ijk}$, and $q_{ijkl}$ are constant with respect to $\ell$.

\section{Proof of Theorem \protect\ref{tisone1} \label{DimMain}}

In this section we give the proof of our main result. Let us start with the $L^{2}(\Omega )$, $\varepsilon$-approximation to
the number of critical points \cite{CM2018b}
\begin{equation*}
 \mathcal{N}_{\ell }^{c}=\lim_{\varepsilon \rightarrow 0}\mathcal{N}_{\ell,\varepsilon }^{c}, \hspace{1cm}
 \mathcal{N}_{\ell ,\varepsilon }^{c}=\int_{\mathbb{S}^{2}}|\text{det}\nabla
^{2}f_{\ell }(x)| {\boldsymbol \delta}_{\varepsilon }(\nabla f_{\ell }(x))dx,
\end{equation*}
for every $x\in \mathbb{S}^{2}$ we define
\begin{equation*}
|\text{det}\nabla ^{2}f_{\ell }(x)| {\boldsymbol \delta}_{\varepsilon }(\nabla f_{\ell
}(x))=\sum_{q=0}^{\infty }\psi _{\ell }^{\varepsilon }(x;q)=:\psi _{\ell
}^{\varepsilon }(x).
\end{equation*}
By continuity of the inner product in $L^{2}(\Omega )$, we write
\begin{align*}
\text{Cov}(\mathcal{N}_{\ell }^{c},h_{\ell ;4}) &=\lim_{\varepsilon \rightarrow
0}\text{Cov}(\mathcal{N}_{\ell ,\varepsilon }^{c},h_{\ell ;4})  \\
&=\lim_{\varepsilon \rightarrow 0}\mathbb{E}\left[ \int_{\mathbb{S}^{2}}|
\text{det}\nabla ^{2}f_{\ell }(x)| {\boldsymbol \delta}_{\varepsilon }(\nabla f_{\ell
}(x))dx\int_{\mathbb{S}^{2}}H_{4}(f_{\ell }(y))dy\right] \\
&=\lim_{\varepsilon \rightarrow 0}\mathbb{E}\left[ \int_{\mathbb{S}
^{2}}\sum_{q=0}^{\infty }\psi _{\ell }^{\varepsilon }(x;q)dx\int_{\mathbb{S}
^{2}}H_{4}(f_{\ell }(y))dy\right].
\end{align*}
Now note that both $\psi _{\ell }^{\varepsilon }(x)$ and $H_{4}(f_{\ell
}(y)) $ are isotropic processes on $\mathbb{S}^{2}$, and hence we have
\begin{align*}
\mathbb{E}\left[ \int_{\mathbb{S}^{2}}\sum_{q=0}^{\infty }\psi _{\ell
}^{\varepsilon }(x;q)dx\int_{\mathbb{S}^{2}}H_{4}(f_{\ell }(y))dy\right]
&=\mathbb{E}\left[ \int_{\mathbb{S}^{2}}\lim_{Q\rightarrow \infty
}\sum_{q=0}^{Q}\psi _{\ell }^{\varepsilon }(x;q)dx\int_{\mathbb{S}
^{2}}H_{4}(f_{\ell }(y))dy\right] \\
&=\lim_{Q\rightarrow \infty }\mathbb{E}\left[ \int_{\mathbb{S}
^{2}}\sum_{q=0}^{Q}\psi _{\ell }^{\varepsilon }(x;q)dx\int_{\mathbb{S}
^{2}}H_{4}(f_{\ell }(y))dy\right]
\end{align*}
by continuity of covariances. Moreover because all integrands are finite-order polynomials we have
\begin{align*}
\lim_{Q\rightarrow \infty }\mathbb{E}\left[ \int_{\mathbb{S}
^{2}}\sum_{q=0}^{Q}\psi _{\ell }^{\varepsilon }(x;q)dx\int_{\mathbb{S}
^{2}}H_{4}(f_{\ell }(y))dy\right]
&=\lim_{Q\rightarrow \infty }\sum_{q=0}^{Q}\int_{\mathbb{S}^{2}}\int_{
\mathbb{S}^{2}}\mathbb{E}\left[ \psi _{\ell }^{\varepsilon
}(x;q)H_{4}(f_{\ell }(y))\right] dxdy  \\
&=\int_{\mathbb{S}^{2}}\int_{\mathbb{S}^{2}}\mathbb{E}\left[ \psi _{\ell
}^{\varepsilon }(x;4)H_{4}(f_{\ell }(y))\right] dxdy \\
&=16\pi ^{2}\int_{0}^{\pi /2}\mathbb{E}\left[ \psi _{\ell }^{\varepsilon }(
\overline{x};4)H_{4}(f_{\ell }(y(\phi )))\right] \sin \phi d\phi,
\end{align*}
where in the last steps we used orthogonality of Wiener chaoses and isotropy;
we take $\overline{x}=(\frac{\pi }{2},0)$ and $y(\phi )=(\frac{\pi }{2},\phi )$.
More explicitly, the previous argument allows us to perform our
argument on {\it the equator}, where $\theta $ is fixed to $\pi/2$.
Note that
\begin{align*}
\psi _{\ell }^{\varepsilon }(\overline{x};4)&=\lambda_{\ell} \Big[\frac{1}{2!2!}\sum_{i=2}^{5}\sum_{j=1}^{i-1}h_{ij}^{
\varepsilon }H_{2}(Y_{i}(\bar{x}))H_{2}(Y_{j}(\bar{x}))+\frac{1}{4!}
\sum_{i=1}^{5}k_{i}^{\varepsilon }H_{4}(Y_{i}(\bar{x}))\\
&\;\;+ \frac{1}{3!} \sum_{\stackrel{i,j=1}{i \ne j}}^5 g_{ij}^{\varepsilon }  H_3(Y_i(x)) H_1(Y_j(x))  + \frac{1}{2} \sum_{\stackrel{i,j,k=1}{i \ne j \ne k}}^5 q_{ijk}^{\varepsilon }  H_2(Y_i(x)) H_1(Y_j(x))H_1(Y_k(x))   \\
&\;\; + \sum_{\stackrel{i,j,k,l=1}{i \ne j \ne k \ne l}}^5 p^{\varepsilon }_{ijkl}  H_1(Y_i(x)) H_1(Y_j(x)) H_1(Y_k(x))  H_1(Y_l(x)) \Big],
\end{align*}
and hence
\begin{align*}
&\text{Cov}(\mathcal{N}_{\ell }^{c},h_{\ell ;4}) \\
&=16\pi ^{2}\lim_{\varepsilon
\rightarrow 0}\int_{0}^{\pi /2}\mathbb{E}\left[ \psi _{\ell }^{\varepsilon }(
\overline{x};4)H_{4}(f_{\ell }(y(\phi )))\right] \sin \phi d\phi \\
&=16\pi^{2} \lambda_{\ell}  \frac{1}{2! 2!}
\sum_{i=2}^{5}\sum_{j=1}^{i-1} \{\lim_{\varepsilon \rightarrow
0}h_{ij}^{\varepsilon } \} \int_{0}^{\pi /2}\mathbb{E}\left[ H_{2}(Y_{i}(
\bar{x}))H_{2}(Y_{j}(\bar{x}))H_{4}(f_{\ell }(y(\phi )))\right] \sin \phi
d\phi \\
&\;\;+16\pi^{2} \lambda_{\ell} \frac{1}{4!}
\sum_{i=1}^{5} \{\lim_{\varepsilon \rightarrow 0}k_{i}^{\varepsilon }
\} \int_{0}^{\pi /2}\mathbb{E}\left[ H_{4}(Y_{i}(\bar{x}))H_{4}(f_{\ell
}(y(\phi )))\right] \sin \phi d\phi\\
&\;\;+16\pi^{2} \lambda_{\ell} \frac{1}{3!}
\sum_{i \ne j} \{\lim_{\varepsilon \rightarrow 0}g_{ij}^{\varepsilon }
\} \int_{0}^{\pi /2} \mathbb{E}\left[ H_{3}(Y_{i}(\bar{x})) H_{1}(Y_{j}(\bar{x})) H_{4}(f_{\ell
}(y(\phi )))\right] \sin \phi d\phi\\
&\;\;+16\pi^{2} \lambda_{\ell} \frac{1}{2}
\sum_{i \ne j \ne k} \{\lim_{\varepsilon \rightarrow 0} p_{ijk}^{\varepsilon }
\} \int_{0}^{\pi /2}\mathbb{E}\left[ H_{2}(Y_{i}(\bar{x})) H_{1}(Y_{j}(\bar{x})) H_{1}(Y_{k}(\bar{x})) H_{4}(f_{\ell
}(y(\phi )))\right] \sin \phi d\phi\\
&\;\;+16\pi^{2} \lambda_{\ell}
\sum_{i \ne j \ne k \ne l} \{\lim_{\varepsilon \rightarrow 0} q_{ijkl}^{\varepsilon }
\} \int_{0}^{\pi /2}\mathbb{E}\left[ H_{1}(Y_{i}(\bar{x})) H_{1}(Y_{j}(\bar{x})) H_{1}(Y_{k}(\bar{x})) H_{1}(Y_{l}(\bar{x})) H_{4}(f_{\ell
}(y(\phi )))\right] \sin \phi d\phi.
\end{align*}
We shall show below that the asymptotic behaviour of $\text{Cov}(\mathcal{N}_{\ell
}^{c},h_{\ell ;4})$ is dominated by three terms corresponding to
\begin{equation*}
\int_{0}^{\pi /2}\mathbb{E}\left[ H_{4}(Y_{2}(\bar{x}))H_{4}(f_{\ell
}(y(\phi )))\right] \sin \phi d\phi, \;\;\; \int_{0}^{\pi /2}\mathbb{E}
\left[ H_{4}(Y_{5}(\bar{x}))H_{4}(f_{\ell }(y(\phi )))\right] \sin \phi d\phi,
\end{equation*}
and
\begin{equation*}
\int_{0}^{\pi /2}\mathbb{E}\left[ H_{2}(Y_{2}(\bar{x}))H_{2}(Y_{5}(\bar{x}
))H_{4}(f_{\ell }(y(\phi )))\right] \sin \phi d\phi.
\end{equation*}
The computation of these leading covariances is given in the three Lemmas \ref
{domin1}-\ref{domin3} to follow, where it is shown that
\begin{equation*}
\int_{0}^{\pi /2}\mathbb{E}\left[ H_{4}(Y_{2}(\bar{x}))H_{4}(f_{\ell
}(y(\phi )))\right] \sin \phi d\phi =4!\frac{2 \cdot 3}{\pi ^{2}}\frac{\log \ell }{\ell ^{2}}+O(\ell ^{-2}),
\end{equation*}
\begin{equation*}
\int_{0}^{\pi /2}\mathbb{E}\left[ H_{4}(Y_{5}(\bar{x}))H_{4}(f_{\ell
}(y(\phi )))\right] \sin \phi d\phi =4!\frac{3^3}{2\pi ^{2}}\frac{\log
\ell }{\ell ^{2}}+O(\ell ^{-2}),
\end{equation*}
\begin{equation*}
\int_{0}^{\pi /2}\mathbb{E}\left[ H_{2}(Y_{2}(\bar{x}))H_{2}(Y_{5}(\bar{x}
))H_{4}(f_{\ell }(y(\phi )))\right] \sin \phi d\phi =4!\frac{3}{\pi ^{2}}
\frac{\log \ell }{\ell ^{2}}+O(\ell ^{-2}).
\end{equation*}
All the remaining terms in $\text{Cov}(\mathcal{N}_{\ell }^{c},h_{\ell;4})$
are shown to be $O(\ell^{-2})$ or smaller in Section \ref{oddt} and Lemmas \ref{subdomin1}-\ref{odd3} below.
From Proposition \ref{coefficients} we know that
\begin{equation*}
k_{2}=\lim_{\varepsilon \rightarrow 0 }k^{\varepsilon}_{2}= \frac{1}{\pi }\frac{\sqrt{3}}{2}, \hspace{1cm} k_{5}=\lim_{\varepsilon \rightarrow 0 }k^{\varepsilon}_{5}=-\frac{1}{\pi }
\frac{7}{ 3^{3}\sqrt{3}}, \hspace{1cm} h_{25}:=\lim_{\varepsilon \rightarrow 0}h^{\varepsilon}_{25}=-\frac{1}{\pi }
\frac{1}{3\sqrt{3}}.
\end{equation*}
Substituting and after some straightforward algebra, one obtains
\begin{align*}
\text{Cov}(\mathcal{N}_{\ell }^{c},h_{\ell ;4}) &=\lambda _{\ell }\left\{ 4 \pi
^{2}h_{25}4!\frac{3}{\pi ^{2}}\frac{\log \ell }{\ell ^{2}}+\frac{2}{3}\pi
^{2}k_{2}4!2^{2}\frac{3}{\pi ^{2}}\frac{\log \ell }{2\ell ^{2}}+\frac{2}{3}
\pi ^{2}k_{5}4!3^{2}\frac{3}{\pi ^{2}}\frac{\log \ell }{2\ell ^{2}}+O(\ell
^{-2})\right\} \\
&=\frac{\lambda _{\ell }}{3\sqrt{3}}4!\frac{\log \ell }{\ell ^{2}}\frac{1}{
\pi }\times \left\{ -12+18-7\right\} +O(1) \\
&=-\frac{\lambda _{\ell }}{3\sqrt{3}}4!\frac{\log \ell }{\ell ^{2}}\frac{1}{
\pi }+O(1).
\end{align*}
Because
\begin{equation*}
\mathcal{A}_{\ell }=-\frac{\lambda _{\ell }}{2^{3}3^{2}\sqrt{3}\pi }\int_{
\mathbb{S}^{2}}H_{4}(f_{\ell }(x))dx=-\frac{\lambda _{\ell }}{2^{3}3^{2}
\sqrt{3}\pi }h_{\ell ;4},
\end{equation*}
we find
\begin{align*}
\text{Cov}(\mathcal{N}_{\ell }^{c},\mathcal{A}_{\ell }) =\frac{\lambda _{\ell
}^{2}}{2^{3}3^{2}\sqrt{3}\pi }\frac{1}{3\sqrt{3}}4!\frac{\log \ell }{\ell
^{2}}\frac{1}{\pi }+O(1) =\frac{\log \ell }{3^{3}\pi ^{2}}+O(1),
\end{align*}
so that our proof of our main theorem is completed, recalling that, as $\ell \to \infty$,
\begin{equation*}
\text{Var}(\mathcal{N}_{\ell }^{c})\sim \text{Var}(\mathcal{A}_{\ell })=\frac{\log \ell }{
3^{3}\pi ^{2}}+O(1).
\end{equation*}

\begin{remark}
A consequence of Theorem \ref{tisone1} is that, as $\ell \rightarrow \infty$,
\begin{equation*}
\text{Var}\left( \text{Proj}[\mathcal{N}_{\ell }^{c}|4]\right)=\frac{\ell ^{2}\log \ell }{
3^{3}\pi ^{2}}+O(\ell ^{2}),
\end{equation*}
so that
\begin{equation*}
\lim_{\ell \rightarrow \infty }\frac{\text{Var}\left( \text{Proj}[\mathcal{N}_{\ell
}^{c}|4]\right) }{\text{Var}\left( \mathcal{N}_{\ell }^{c}\right) }=1.
\end{equation*}
Note that by orthogonality we have
\begin{equation*}
\text{Var}\left( \mathcal{N}_{\ell }^{c}\right) =\sum_{q=0}^{\infty }\text{Var}\left(
\text{Proj}[\mathcal{N}_{\ell }^{c}|q]\right) =\text{Var}\left( \text{Proj}[
\mathcal{N}_{\ell }^{c}|4]\right) +\sum_{k=1}^{\infty }\text{Proj}[\mathcal{
N}_{\ell }^{c}|4+2k],
\end{equation*}
where the odd terms in the expansion vanish by symmetry arguments,
 $\rm{Var}\left( \text{Proj}[\mathcal{N}_{\ell }^{c}|0]\right) =0$ is obvious and
$\rm{Var}\left( \text{Proj}[\mathcal{N}_{\ell }^{c}|2]\right) =0$ was shown in
\cite{CM2018b}. Hence we have the bound
\begin{equation*}
\sum_{k=1}^{\infty }\text{Proj}[\mathcal{N}_{\ell }^{c}|4+2k]=o(\ell
^{2}\log \ell ).
\end{equation*}
In fact, it is possible to establish the slightly stronger result
\begin{equation*}
\sum_{k=1}^{\infty }\text{Proj}[\mathcal{N}_{\ell }^{c}|4+2k]=O(\ell^{2});
\end{equation*}
we omit the proof for brevity's sake.
\end{remark}

\section{Evaluation of the Projection Coefficients $h_{52},k_{2},k_{5}$
\label{EvaCoeff}}

In this section we evaluate the three projection coefficients in the
Wiener-chaos expansion which are required for the completion of our
arguments.
\begin{proposition}
\label{coefficients} We have that
\begin{align*}
k_{2}=\frac{1}{\pi }\frac{
\sqrt{3}}{2},\hspace{1cm}
k_{5}=-\frac{1}{\pi }\frac{7
}{ 3^{3}\sqrt{3}},\hspace{1cm}
h_{25}=-\frac{1}{\pi }\frac{1}{3\sqrt{3}}.
\end{align*}
\end{proposition}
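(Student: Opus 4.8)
The plan is to reduce each of the three coefficients to a one-dimensional integral that can be evaluated by residues. I would first carry out the limit $\varepsilon\to0$ in the definitions of $k_2,k_5,h_{25}$. Since $\boldsymbol\delta_\varepsilon(\tau_1Y_1,\tau_1Y_2)\to\tau_1^{-2}\delta(Y_1)\delta(Y_2)$ and the modulus factor depends only on $(Y_3,Y_4,Y_5)$, integrating out the independent pair $(Y_1,Y_2)$ produces $\tau_1^{-2}\phi(0)^2=\frac{2}{\lambda_\ell}\frac{1}{2\pi}$; together with the prefactor $\lambda_\ell$ this leaves the universal constant $\frac1\pi$ and forces $Y_1=Y_2=0$ in the surviving Hermite factors. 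Writing $Q:=\tfrac{1}{\sqrt8}Y_3Y_5+\tfrac18Y_3^2-\tfrac18Y_4^2$ for the (limiting) normalized Hessian determinant and using $H_4(0)=3$, $H_2(0)=-1$, the coefficients collapse to $k_2=\tfrac3\pi\,\mathbb E|Q|$, $k_5=\tfrac1\pi\,\mathbb E[|Q|H_4(Y_5)]$, $h_{25}=-\tfrac1\pi\,\mathbb E[|Q|H_2(Y_5)]$. The first moment $I_0:=\mathbb E|Q|$ is free: repeating the same reduction without Hermite factors gives $\mathbb E[\mathcal N_\ell^c]=4\lambda_\ell\,\mathbb E|Q|+o(\lambda_\ell)$, and comparison with the known $\mathbb E[\mathcal N_\ell^c]=\frac2{\sqrt3}\lambda_\ell+O(1)$ forces $I_0=\frac1{2\sqrt3}$, whence $k_2=\frac3\pi\frac1{2\sqrt3}=\frac1\pi\frac{\sqrt3}{2}$.

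For the two nontrivial moments $I_2:=\mathbb E[|Q|H_2(Y_5)]$ and $I_4:=\mathbb E[|Q|H_4(Y_5)]$ I would combine the elementary identity $|x|=-\frac1\pi\,\mathrm{p.v.}\!\int_{\mathbb R}\frac{e^{i\xi x}-1}{\xi^2}\,d\xi$ with the closed form of the modulated characteristic function of the Gaussian quadratic form $Q$. Writing $Q=Y^{T}AY$ in $(Y_3,Y_4,Y_5)$ with $A=\tfrac18\left(\begin{smallmatrix}1&0&\sqrt2\\0&-1&0\\\sqrt2&0&0\end{smallmatrix}\right)$ and inserting the generating function $e^{tY_5-t^2/2}=\sum_k\frac{t^k}{k!}H_k(Y_5)$ into $\mathbb E[e^{i\xi Y^TAY+tY_5}]=\det(I-2i\xi A)^{-1/2}\exp\!\big(\tfrac{t^2}2[(I-2i\xi A)^{-1}]_{33}\big)$, one obtains, with $a:=1-\tfrac{i\xi}2$, $b:=1+\tfrac{i\xi}4$, $D:=ab^2$ and $B:=-\tfrac{\xi^2}{8ab}$, the identities $\mathbb E[e^{i\xi Q}]=D^{-1/2}$, $\mathbb E[e^{i\xi Q}H_2(Y_5)]=B D^{-1/2}$ and $\mathbb E[e^{i\xi Q}H_4(Y_5)]=3B^2 D^{-1/2}$. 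Because $H_2,H_4$ are centered the $-1$ term drops out, leaving $I_2=\frac{1}{8\pi}\int_{\mathbb R}a^{-3/2}b^{-2}\,d\xi$ and $I_4=-\frac{3}{64\pi}\int_{\mathbb R}\xi^2 a^{-5/2}b^{-3}\,d\xi$. The branch point of $a^{-p}$ sits at $\xi=-2i$ (lower half-plane) while $b^{-q}$ has its only singularity at $\xi=4i$ (upper half-plane, where $a=3$), so closing the contour upward and evaluating the residue at $\xi=4i$ (a double pole for $I_2$, a triple pole for $I_4$) yields $I_2=\frac{1}{3\sqrt3}$ and $I_4=-\frac{7}{3^3\sqrt3}$. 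Substituting gives $h_{25}=-\frac1\pi\frac{1}{3\sqrt3}$ and $k_5=-\frac1\pi\frac{7}{3^3\sqrt3}$, as claimed.

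The genuinely delicate points are analytic rather than algebraic. The hard part will be justifying the interchange of the $\varepsilon\to0$ limit, the principal-value integral, and the Gaussian expectation: the relevant integrands are only conditionally integrable, and one must verify the decay $a^{-3/2}b^{-2},\ \xi^2a^{-5/2}b^{-3}=O(|\xi|^{-7/2})$ that legitimizes dropping the large semicircle and closing the contour, as well as track the fractional-power branch cuts of $a^{-p}$ consistently so that no cut is crossed in the upper half-plane. By contrast, the residue bookkeeping at the higher-order pole $\xi=4i$ and the final arithmetic are routine, if error-prone, and the separate, non-contour treatment of $I_0$ is needed precisely because the constant term $\mathbb E[1]\neq0$ spoils the cancellation of the $-1$ in the integral representation.
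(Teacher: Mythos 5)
Your proposal is correct, and every intermediate value you state checks out, but your route differs from the paper's in how the Gaussian moments are evaluated, so a comparison is worthwhile. The reduction step is identical: the paper also factors out $(Y_1,Y_2)$ through $\varphi_a=\lim_{\varepsilon\to0}\mathbb{E}[H_a(Y)\delta_\varepsilon(\tau_1 Y)]=H_a(0)/(\sqrt{2\pi}\,\tau_1)$, arriving at $k_2=\frac3\pi\mathbb{E}|Q|$, $k_5=\frac1\pi\mathbb{E}[|Q|H_4(Y_5)]$, $h_{25}=-\frac1\pi\mathbb{E}[|Q|H_2(Y_5)]$. From there the paper changes variables to $(Z_1,Z_2,Z_3)$ with $Q=\frac18(Z_1Z_3-Z_2^2)$, computes $\mathbb{E}|Z_1Z_3-Z_2^2|=4/\sqrt3$ by the Li--Wei formula (\cite{LiWei}, Theorem 2.1) plus residues, and then \emph{cites} \cite{CW} for the weighted moments $\mathcal{I}_2=2^5\cdot 5/\sqrt3$ and $\mathcal{I}_4=2^8\cdot 5^2\cdot 7/(3\sqrt3)$; you instead (i) recover $\mathbb{E}|Q|=\frac1{2\sqrt3}$ by matching the exact Kac--Rice identity $\mathbb{E}[\mathcal{N}_\ell^c]=4\lambda_\ell\,\mathbb{E}|Q_\ell|$ against the known expansion $\mathbb{E}[\mathcal{N}_\ell^c]=\frac2{\sqrt3}\lambda_\ell+O(1)$ of \cite{nicolaescu}, \cite{CMW} (legitimate and not circular, since that expansion is established independently, though you should record the easy step $\mathbb{E}|Q_\ell|\to\mathbb{E}|Q|$), and (ii) compute the two Hermite-weighted moments self-containedly where the paper outsources them. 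Your identities are correct: $\det(I-2i\xi A)=ab^2$ with $a=1-i\xi/2$, $b=1+i\xi/4$, and $[(I-2i\xi A)^{-1}]_{33}-1=-\xi^2/(8ab)=B$, hence
\begin{equation*}
\mathbb{E}[e^{i\xi Q}H_2(Y_5)]=B\,D^{-1/2},\qquad \mathbb{E}[e^{i\xi Q}H_4(Y_5)]=3B^2D^{-1/2},
\end{equation*}
and the residues at the double (resp.\ triple) pole $\xi=4i$, where $a=3$, give $I_2=3^{-3/2}$ and $I_4=-7\cdot3^{-7/2}$, which agree exactly with the values the paper obtains by combining $\mathcal{I}_0,\mathcal{I}_2,\mathcal{I}_4$. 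What each approach buys: the paper's proof is shorter but leans on \cite{LiWei} and \cite{CW} for the hardest computations; yours is self-contained and in the same spirit as Li--Wei (an integral representation of the absolute value plus Gaussian determinant formulas), at the cost of the analytic justifications you flag --- which are in fact mild here, since after the cancellation coming from $\mathbb{E}[H_k(Y_5)]=0$ your integrands $a^{-3/2}b^{-2}$ and $\xi^2a^{-5/2}b^{-3}$ are absolutely convergent of order $O(|\xi|^{-7/2})$, and the branch cut of $a^{-p}$ emanates from $\xi=-2i$ downward, away from the closed upper half-plane, so closing the contour upward is safe.
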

\begin{proof}
Let us recall first the following simple result
\begin{equation*}
\varphi_{a}:=\lim_{\varepsilon \to 0} \mathbb{E}[H_{a}(Y)\delta_{\varepsilon}(\tau_1 Y)]=
\begin{cases}
\frac{1}{\sqrt{2\pi }\tau_1} & a=0, \\
0 & a=1, \\
-\frac{1}{\sqrt{2\pi }\tau_1} & a=2, \\
\frac{3}{\sqrt{2\pi }\tau_1} & a=4.
\end{cases}
\end{equation*}
Indeed, for example
\begin{equation*}
\lim_{\varepsilon \to 0} \mathbb{E}[H_{4}(Y)\delta_{\varepsilon} (\tau_1 Y)]=\lim_{\varepsilon \to 0} \mathbb{E}[(Y^{4}-6Y^{2}+3) \delta_{\varepsilon} (\tau_1 Y)]=\frac{3}{\sqrt{2\pi }\tau_1},
\end{equation*}
since
\begin{equation*}
\lim_{\varepsilon \to 0} \mathbb{E}[Y^{n} \delta_{\varepsilon}(\tau_1 Y)]=\lim_{\varepsilon \to 0}\int_{-\infty }^{\infty }y^{n}\delta_{\varepsilon} (\tau_1 y)\frac{1}{
\sqrt{2\pi }}e^{-\frac{y^{2}}{2}}dy=
\begin{cases}
\frac{1}{\sqrt{2\pi }\tau_1} & n=0, \\
0 & n=1,2,3\dots
\end{cases}
\end{equation*}
Now note that
\begin{align*}
k_{2}&=\lim_{\varepsilon \rightarrow 0 }k^{\varepsilon}_{2} \\
&=  \lambda_{\ell } \; \mathbb{E}\left[ \left\vert \frac{1}{2\sqrt{2}}Y_{3}Y_{5}+\frac{1}{8}
Y_{3}^{2}-\frac{1}{8}Y_{4}^{2}\right\vert \right] \varphi _{0}\,\varphi _{4} \\
&= \frac{3}{\pi }  \mathbb{E}\left[ \left\vert \frac{1}{2\sqrt{2}}Y_{3}Y_{5}+
\frac{1}{8}Y_{3}^{2}-\frac{1}{8}Y_{4}^{2}\right\vert \right],
\end{align*}
\begin{align*}
k_{5}&=\lim_{\varepsilon \rightarrow 0 }k^{\varepsilon}_{5} \\
&= \lambda_{\ell } \;\mathbb{E}\left[ \left\vert \frac{1}{2\sqrt{2}}
Y_{3}Y_{5}+\frac{1}{8}Y_{3}^{2}-\frac{1}{8}Y_{4}^{2}\right\vert H_{4}(Y_{5})
\right] \varphi _{0}^{2} \\ &= \frac{1}{\pi }\;\mathbb{E}\left[ \left\vert \frac{1}{2\sqrt{2}}
Y_{3}Y_{5}+\frac{1}{8}Y_{3}^{2}-\frac{1}{8}Y_{4}^{2}\right\vert H_{4}(Y_{5})
\right],
\end{align*}
and
\begin{align*}
h_{52}&=\lim_{\varepsilon \rightarrow 0}h^{\varepsilon}_{25} \\
&=  \lambda_{\ell }\; \mathbb{E}\left[ \left\vert \frac{1}{2\sqrt{2}}Y_{3}Y_{5}+\frac{1}{8}
Y_{3}^{2}-\frac{1}{8}Y_{4}^{2}\right\vert H_{2}(Y_{5})\right] \varphi
_{0}\,\varphi _{2} \\
 &=  - \frac{1}{\pi } \mathbb{E}\left[ \left\vert \frac{1}{2\sqrt{2}}Y_{3}Y_{5}+
\frac{1}{8}Y_{3}^{2}-\frac{1}{8}Y_{4}^{2}\right\vert H_{2}(Y_{5})\right].
\end{align*}
Let us introduce the change of variables
\begin{equation*}
Z_{1}=\sqrt{3}Y_{3},\hspace{1cm}Z_{2}=Y_{4},\hspace{1cm}Z_{3}=\frac{\sqrt{8}
}{\sqrt{3}}Y_{5}+\frac{1}{\sqrt{3}}Y_{3},
\end{equation*}
so that $(Z_1, Z_2, Z_2)$ is a centred Gaussian vector with covariance matrix
\begin{equation*}
\left(
\begin{matrix}
3 & 0 & 1 \\
0 & 1 & 0 \\
1 & 0 & 3
\end{matrix}
\right),
\end{equation*}
and we can write
\begin{align*}
\frac{1}{2\sqrt{2}}Y_{3}Y_{5}+\frac{1}{8}Y_{3}^{2}-\frac{1}{8}Y_{4}^{2}
=\frac{1}{8} (Z_{1}Z_{3}- Z_{2}^{2}).
\end{align*}
\noindent The coefficient $k_{2}$ can be computed as follows: write
\begin{equation*}
\mathbb{E}\left[ \left\vert \frac{1}{\sqrt{8}}Y_{3}Y_{5}+\frac{1}{8}
Y_{3}^{2}-\frac{1}{8}Y_{4}^{2}\right\vert \right] =\frac{1}{8}\mathbb{E}
\left[ \left\vert Z_{1}Z_{3}-Z_{2}^{2}\right\vert \right]=\frac{1}{8} \mathbb{
E}\left[ \left\vert (Z_{1},Z_{2},Z_{3})^{T}A(Z_{1},Z_{2},Z_{3})\right\vert
\right],
\end{equation*}
where the symmetric matrix $A$ given by
\begin{equation*}
A=\left(
\begin{matrix}
0 & 0 & {1}/{2} \\
0 & -1 & 0 \\
{1}/{2} & 0 & 0
\end{matrix}
\right);
\end{equation*}
we apply \cite{LiWei}, Theorem 2.1, to obtain
\begin{equation*}
\mathbb{E}\left[ \left\vert Z_{1}Z_{3}-Z_{2}^{2}\right\vert \right] =\frac{2
}{\pi }\int_{0}^{\infty }\frac{1}{t^{2}}\left\{ 1-\frac{1}{2\sqrt{\text{det}
(I-2it\Sigma A)}}-\frac{1}{2\sqrt{\text{det}(I+2it\Sigma A)}}\,\right\} dt
\end{equation*}
where we have that
\begin{equation*}
\text{det}(I-2it\Sigma A)=1+12t^{2}+16it^{3}, \hspace{1cm} \text{ det}(I+2it\Sigma
A)=1+12t^{2}-16it^{3},
\end{equation*}
and computing the integral with Cauchy methods for residuals, we get
\begin{equation*}
\mathbb{E}\left[ \left\vert Z_{1}Z_{3}-Z_{2}^{2}\right\vert \right] =\frac{
4}{\sqrt{3}},
\end{equation*}
and
\begin{equation*}
k_{2} =\frac{3}{\pi }\mathbb{E}\left[ \left\vert \frac{1}{\sqrt{8}}
Y_{3}Y_{5}+\frac{1}{8}Y_{3}^{2}-\frac{1}{8}Y_{4}^{2}\right\vert \right] =
\frac{\sqrt{3}}{2 \pi },
\end{equation*}
as claimed.
We introduce now the following notation
\begin{equation*}
\mathcal{I}_{r}=\mathbb{E}[|Z_{1}Z_{3}-Z_{2}^{2}|(Z_{1}-3Z_{3})^{r}]
\end{equation*}
for $r=0,2,4$, so that,
\begin{align*}
h_{52}
& =-\frac{1}{\pi }\frac{1}{8}\mathbb{E}\left[ \left\vert
Z_{1}Z_{3}-Z_{2}^{2}\right\vert H_{2}\left( \frac{1}{\sqrt{8}\sqrt{3}}
(3Z_{3}-Z_{1})\right) \right]\\
& =-\frac{1}{\pi }\frac{1}{8}\mathbb{E}\left[ \left\vert
Z_{1}Z_{3}-Z_{2}^{2}\right\vert \left( \frac{1}{\sqrt{8}\sqrt{3}}
(3Z_{3}-Z_{1})\right) ^{2}\right] +\frac{1}{\pi }\frac{1}{8}\mathbb{E}\left[
\left\vert Z_{1}Z_{3}-Z_{2}^{2}\right\vert \right] \\
& =-\frac{1}{\pi }\frac{1}{3\cdot 2^{6}}\mathcal{I}_{2}+\frac{}{\pi }\frac{1
}{2^{3}}\mathcal{I}_{0},
\end{align*}
and
\begin{align*}
k_{5}
& =\frac{1}{\pi }\;\frac{1}{8}\mathbb{E}\left[ \left\vert
Z_{1}Z_{3}-Z_{2}^{2}\right\vert H_{4}\left( \frac{1}{\sqrt{8}\sqrt{3}}
(3Z_{3}-Z_{1})\right) \right] 
 =\frac{1}{\pi }\frac{1}{2^{9}\cdot 3^{2}}\mathcal{I}_{4}-\frac{1}{\pi }\;
\frac{1}{2^{5}}\mathcal{I}_{2}+\frac{1}{\pi }\;\frac{3}{2^{3}}\mathcal{I}_{0}.
\end{align*}
The statement follows applying the results in \cite{CW} where it is proved that
\begin{equation*}
\mathcal{I}_{2}=\frac{
2^{5}\cdot 5}{\sqrt{3}},\hspace{1cm}\mathcal{I}_{4}=\frac{2^{8}\cdot
5^{2}\cdot 7}{3\sqrt{3}}.
\end{equation*}

\end{proof}

\section{Terms with odd index Hermite polynomials} \label{oddt}

In this section we prove that the terms in the $4$-th chaos formula \eqref{4chaos} with odd index Hermite polynomials produce in $\text{Cov}(\mathcal{N}_{\ell }^{c},h_{\ell;4})$ terms of order $O(\ell^{-2})$ and terms equal to zero. We first focus, in the following proposition, on the projection coefficients.
\begin{proposition} \label{17:37}
The projection coefficients $g_{ij}$, $p_{ijk}$ and $q_{ijkl}$ are such that
\begin{itemize}
\item for $i, j \ne 3, 5$, we have  $g_{ij}=0$,
\item for $j, k \ne 1, 2, 4$, we have $p_{ijk}=0$,
\item we have $q_{ijkl}=0$.
\end{itemize}
\end{proposition}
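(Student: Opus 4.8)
The plan is to establish all three vanishing statements by reflection (parity) arguments, reading off three structural features of the integrands that define $g_{ij}$, $p_{ijk}$ and $q_{ijkl}$. Write
\[
M:=\left|\frac{\tau_3\tau_5}{\lambda_\ell^2}Y_3Y_5+\frac{\tau_2\tau_3}{\lambda_\ell^2}Y_3^2-\frac{\tau_4^2}{\lambda_\ell^2}Y_4^2\right|
\]
for the modulus of the Hessian determinant common to every coefficient. The facts I would use are: (i) $M$ depends only on $(Y_3,Y_4,Y_5)$, is even in $Y_4$, and is invariant under the joint reflection $(Y_3,Y_5)\mapsto(-Y_3,-Y_5)$; (ii) the factor ${\boldsymbol\delta}_\varepsilon(\tau_1Y_1,\tau_1Y_2)$ splits as a product and is even in $Y_1$ and in $Y_2$ separately; (iii) $H_1$ and $H_3$ are odd polynomials while $H_2$ is even. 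Since $Y_1,\dots,Y_5$ are independent standard Gaussians, every single-coordinate reflection $Y_a\mapsto -Y_a$ preserves the law, so any coefficient whose integrand is \emph{odd} under one such reflection vanishes, and this persists in the limit $\varepsilon\to0$.

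For $q_{ijkl}$ I would note that $i,j,k,l$ are distinct, so $\{i,j,k,l\}$ omits exactly one element of $\{1,\dots,5\}$ and therefore must contain some $a\in\{1,2,4\}$ carrying an $H_1$ factor. Reflecting $Y_a$ leaves $M$ and ${\boldsymbol\delta}_\varepsilon$ invariant (the delta is even in $Y_1,Y_2$; $M$ is even in $Y_4$) and flips the single odd factor $H_1(Y_a)$, so the integrand is odd and $q_{ijkl}=0$ for every admissible quadruple. The same device dispatches $g_{ij}$: its odd factors are $H_3(Y_i)$ and $H_1(Y_j)$, and as soon as the $H_3$-index $i$ or the $H_1$-index $j$ lies in $\{1,2,4\}$, reflecting that coordinate flips exactly one odd factor while fixing all others; hence $g_{ij}=0$, which yields in particular the stated $g_{ij}=0$ for $i,j\ne3,5$. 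Thus for $q$ and $g$ the parity argument alone settles the claims as worded.

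For $p_{ijk}$ the two odd factors are $H_1(Y_j)$ and $H_1(Y_k)$, the $H_2(Y_i)$ factor being even and reflection-neutral. If \emph{one} of $j,k$ lies in $\{1,2,4\}$, reflecting that coordinate flips a single odd factor and the integrand is again odd, so $p_{ijk}=0$. Consequently the only configuration the reflection argument cannot reach is exactly the one singled out in the statement, namely $j,k\ne1,2,4$, which (since $j\neq k$ and there are only five coordinates) forces $\{j,k\}=\{3,5\}$: here the two sign flips cancel and $M$ is invariant, so there is no parity obstruction to a nonzero value. This is the main obstacle. To settle it I would pass to the variables $Z_1=\sqrt3\,Y_3,\ Z_2=Y_4,\ Z_3=\tfrac{\sqrt8}{\sqrt3}Y_5+\tfrac1{\sqrt3}Y_3$ of Proposition \ref{coefficients}, under which $M=\tfrac18|Z_1Z_3-Z_2^2|$ and $Y_3Y_5=\tfrac1{\sqrt8}\bigl(Z_1Z_3-\tfrac13Z_1^2\bigr)$, reducing $p_{ijk}$ (with $i\in\{1,2,4\}$) to a single explicit Gaussian expectation of the form $\mathbb{E}\bigl[|Z_1Z_3-Z_2^2|\,(\cdots)\bigr]$, to be evaluated by the residue/characteristic-function method of \cite{LiWei} used in that proof. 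Diagonalising the $(Z_1,Z_3)$ block and discarding the odd cross term shows this reduces to computing $\mathbb{E}\!\left[\,|W|\,W\,\right]$ with $W=2A^2-B^2-C^2$ for independent standard Gaussians $A,B,C$ (and an analogous $(C^2-1)$-weighted integral when $i=4$). This explicit evaluation is exactly the step I expect to be the crux: unlike the $g$ and $q$ cases, the diagonal configuration is not killed by any symmetry of the Gaussian law, so the vanishing asserted for $j,k\ne1,2,4$ can only be decided by carrying out this computation rather than by parity.
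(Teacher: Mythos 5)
Your reflection argument is, in substance, the paper's own proof. The paper disposes of the indices $1,2$ via the fact that $\lim_{\varepsilon\to0}\mathbb{E}[H_a(Y)\delta_\varepsilon(\tau_1 Y)]=0$ for odd $a$, disposes of the index $4$ by oddness of the integrand in $Y_4$, and declares the $p$- and $q$-cases ``similar''; your unified parity principle proves exactly the same facts ($q_{ijkl}=0$ always, $g_{ij}=0$ whenever $i$ or $j\in\{1,2,4\}$, $p_{ijk}=0$ whenever $j$ or $k\in\{1,2,4\}$), and is if anything cleaner. The only divergence concerns the configuration $\{j,k\}=\{3,5\}$, which you read --- following the literal wording of the second bullet --- as part of the claim, and for which you set up but do not carry out a \cite{LiWei}-type computation. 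That wording is a misprint: the intended claim is the complement (vanishing when at least one of $j,k$ lies in $\{1,2,4\}$), which is precisely what your parity argument establishes. Indeed, the preamble of Section \ref{oddt} states explicitly that the coefficients ``not discussed in Proposition \ref{17:37}'' are the $p_{a35}$, $a=1,2,4$, and Lemmas \ref{odd2}--\ref{odd3} neutralize those terms by showing that the accompanying integrals $\int_0^{\pi/2}\mathbb{E}[H_2(Y_a(\bar x))H_1(Y_3(\bar x))H_1(Y_5(\bar x))H_4(f_\ell(y(\phi)))]\sin\phi\,d\phi$ vanish; the values of the coefficients $p_{a35}$ themselves are never used.

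Moreover, your instinct that the diagonal case ``can only be decided by carrying out this computation'' is vindicated in the strongest way: the computation comes out nonzero, so the bullet as literally printed is false and your ``crux'' cannot be closed. In your own reduction, with $W=2A^2-B^2-C^2$ and $S=B^2+C^2$ (density $\tfrac12 e^{-s/2}$), conditioning on $t=2A^2$ gives
\begin{equation*}
\mathbb{E}\bigl[(t-S)\,|t-S|\bigr]=t^2-4t+8-16e^{-t/2},
\end{equation*}
and averaging over $A$ (using $\mathbb{E}[4A^4]=12$ and $\mathbb{E}[e^{-A^2}]=1/\sqrt{3}$) yields $\mathbb{E}[W|W|]=12-16/\sqrt{3}\neq0$. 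Since $\mathbb{E}\bigl[|Z_1Z_3-Z_2^2|\bigl(Z_1Z_3-\tfrac13Z_1^2\bigr)\bigr]=\tfrac23\,\mathbb{E}[W|W|]$ and the delta factor contributes $\varphi_0\varphi_2\neq0$ (in the notation of Proposition \ref{coefficients}), one gets $p_{135},p_{235}\neq0$. So the correct resolution is not to prove the literal bullet but to restate it as ``$p_{ijk}=0$ whenever $j$ or $k\in\{1,2,4\}$'' --- which your proposal already proves completely --- and to leave $\{j,k\}=\{3,5\}$ to Lemmas \ref{odd2}--\ref{odd3}, as the paper does.
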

\begin{proof}
Recalling that for $a$ odd we have
\begin{equation*}
\lim_{\varepsilon \to 0} \mathbb{E}[H_{a}(Y)\delta_{\varepsilon}(\tau_1 Y)]=0,
\end{equation*}
from this we immediately see that the coefficients $g_{ij}$ with $i, j = 1, 2$ are all equal to zero. We consider now the coefficients $g_{ij}$ with $i=4$ or $j=4$, for these coefficients we observe that the expectation with respect to the random variable $Y_4$ vanishes since it is expressed as the integral of an odd function. The proof of the last two points of the statement is similar.
\end{proof}

In Lemmas \ref{odd1}-\ref{odd3} we prove that the terms in $\text{Cov}(\mathcal{N}_{\ell }^{c},h_{\ell;4})$ that are multiplied by the projection coefficients not discussed in Proposition \ref{17:37} are either zero or of order $O(\ell^{-2})$. In particular we prove that, for $a, b=3, 5$, $a \ne b$,  
\begin{align*}
\int_{0}^{\pi / 2} \mathbb{E}[H_{3}(Y_{a}(\bar{x}))H_{1}(Y_{b}(\bar{x})) H_4(f_{\ell}(y(\phi)))] \sin \phi d \phi= O(\ell^{-2}),
\end{align*}
for $a=1, 2$,
\begin{align*}
\int_{0}^{\pi / 2} \mathbb{E}[H_{2}(Y_{a}(\bar{x})) H_{1}(Y_{3}(\bar{x})) H_{1}(Y_{5}(\bar{x}))  H_4(f_{\ell}(y(\phi)))] \sin \phi d \phi=0,
\end{align*}
and that
\begin{align*}
 \int_{0}^{\pi / 2} \mathbb{E}[H_{2}(Y_{4}(\bar{x})) H_{1}(Y_{3}(\bar{x}))  H_{1}(Y_{5}(\bar{x})) H_4(f_{\ell}(y(\phi)))] \sin \phi d \phi=0.
 \end{align*}

\appendix

\section{Auxiliary Lemmas}\label{Appendix}

In this appendix, we collect a number of technical results that were
exploited for the correlation results above. We divide the results into two
subsections, collecting respectively dominant and subdominant terms.

\subsection{Dominant terms}

In this subsection, we collect the results concerning the three dominant
terms.
\begin{lemma}
\label{domin1} \bigskip As $\ell \rightarrow \infty$,
\begin{equation*}
\int_{0}^{\pi /2}\mathbb{E}\left[ H_{4}(Y_{2}(\bar{x}))H_{4}(f_{\ell
}(y(\phi )))\right] \sin \phi d\phi =4!2^{2}\frac{3}{\pi ^{2}}\frac{\log
\ell }{2\ell ^{2}}+O(\ell ^{-2}).
\end{equation*}
\end{lemma}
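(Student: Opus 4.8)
The plan is to collapse the integrand to a single power of the cross-correlation of two Gaussian variables, to compute that correlation explicitly using the equatorial geometry, and then to extract the leading $\log\ell$ asymptotics of the resulting Legendre-type integral.

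First I would observe that both $Y_2(\bar x)$ and $f_\ell(y(\phi))$ are centred, unit-variance Gaussian random variables: the first by construction of the standardized vector $Y(x)$, the second because $\mathrm{Var}(f_\ell(x))=1$. Hence the orthogonality (diagram) formula for Hermite polynomials gives $\mathbb{E}[H_4(Y_2(\bar x))H_4(f_\ell(y(\phi)))]=4!\,r_\ell(\phi)^4$, where $r_\ell(\phi):=\mathbb{E}[Y_2(\bar x)\,f_\ell(y(\phi))]$. This reduces the lemma to computing $r_\ell(\phi)$ and integrating its fourth power.

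Next I would compute $r_\ell(\phi)$ explicitly. Since $\bar x=(\pi/2,0)$ and $y(\phi)=(\pi/2,\phi)$ both lie on the equator, $\sin\theta=1$ and $\partial_{2}$ reduces to $\partial/\partial\varphi$; moreover, for a point $x=(\pi/2,\varphi_x)$ one has $\cos d(x,y(\phi))=\cos(\varphi_x-\phi)$, so that the covariance is $P_\ell(\cos(\varphi_x-\phi))$. Differentiating in $\varphi_x$ and setting $\varphi_x=0$ yields
\[
r_\ell(\phi)=\frac{\sqrt 2}{\sqrt{\lambda_\ell}}\,\sin\phi\,P_\ell'(\cos\phi).
\]
Substituting, the quantity to estimate becomes $4!\,\frac{4}{\lambda_\ell^2}\int_0^{\pi/2}\sin^5\phi\,(P_\ell'(\cos\phi))^4\,d\phi$, and I would rewrite the integrand via $\sin\phi\,P_\ell'(\cos\phi)=-\tfrac{d}{d\phi}P_\ell(\cos\phi)$, reducing everything to the asymptotics of $\int_0^{\pi/2}\big(\tfrac{d}{d\phi}P_\ell(\cos\phi)\big)^4\sin\phi\,d\phi$ (equivalently, after $t=\cos\phi$, of $\int_0^1 (P_\ell'(t))^4(1-t^2)^2\,dt$).

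The analytic core, and the main obstacle, is this last integral. I would feed in the Laplace--Heine/Hilb asymptotic $P_\ell(\cos\phi)\sim\sqrt{\tfrac{2}{\pi\ell\sin\phi}}\cos\!\big((\ell+\tfrac12)\phi-\tfrac\pi4\big)$, valid in the bulk; differentiating, the dominant term comes from hitting the cosine, contributing a factor $(\ell+\tfrac12)$ and converting $\cos$ to $\sin$. Raising to the fourth power and replacing the oscillatory $\sin^4$ by its mean $3/8$, the integrand is $\sim\frac{3\ell^2}{2\pi^2}\,\frac{1}{\sin^2\phi}$, so that $\int_0^{\pi/2}(\cdots)\sin\phi\,d\phi\sim\frac{3\ell^2}{2\pi^2}\int_{\ell^{-1}}\frac{d\phi}{\sin\phi}\sim\frac{3\ell^2}{2\pi^2}\log\ell$, the logarithm being generated by the lower cutoff $\phi\sim\ell^{-1}$. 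Inserting this with $\lambda_\ell^2\sim\ell^4$ gives $4!\,\frac{4}{\ell^4}\cdot\frac{3\ell^2}{2\pi^2}\log\ell=4!\,2^2\,\frac{3}{\pi^2}\frac{\log\ell}{2\ell^2}$, exactly the claimed leading term. The genuinely delicate points, which constitute the real work, are: (i) controlling the error in the Hilb expansion for both $P_\ell$ and its $\phi$-derivative, so that sub-leading pieces contribute only $O(\ell^{-2})$ after the $\lambda_\ell^{-2}$ prefactor; (ii) treating the pole region $\phi\lesssim\ell^{-1}$, where the trigonometric form must be replaced by its Bessel-function behaviour and which is shown to contribute only $O(\ell^2)$, hence $O(\ell^{-2})$ after scaling; and (iii) justifying the replacement of $\sin^4$ by its average, i.e. showing the oscillatory cross-terms integrate to $O(\ell^2)$. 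These are precisely the moment estimates of the type established in \cite{CW} and \cite{MW2014} for $\mathrm{Var}(\mathcal{N}_\ell^c)$ and $\mathrm{Var}(h_{\ell;4})$, so in practice I would reduce $\int_0^1 (P_\ell'(t))^4(1-t^2)^2\,dt$ to the Legendre-moment asymptotics recorded there.
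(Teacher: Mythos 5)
Your proposal is correct and follows essentially the same route as the paper: the Diagram Formula reduces the expectation to $4!\,\{\mathbb{E}[Y_2(\bar x)f_\ell(y(\phi))]\}^4$, the equatorial computation gives $\mathbb{E}[Y_2(\bar x)f_\ell(y(\phi))]=\sqrt{2/\lambda_\ell}\,P_\ell'(\cos\phi)\sin\phi$, and the resulting integral $\int_0^{\pi/2}[P_\ell'(\cos\phi)\sin\phi]^4\sin\phi\,d\phi$ is evaluated exactly as in the paper's Lemma \ref{AA}: Hilb-type asymptotics, averaging the fourth power of the oscillatory factor to $3/8$, the logarithm emerging from $\int_{C/\ell}^{\pi/2}\sin^{-1}\phi\,d\phi$, and the local region $\phi\lesssim \ell^{-1}$ plus remainder terms contributing $O(\ell^2)$ before rescaling. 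The only difference is one of packaging — you differentiate the asymptotic expansion of $P_\ell$ and defer the error control to estimates from \cite{CW}/\cite{CMW}, whereas the paper quotes the expansion of $P_\ell^{(r)}$ with explicit remainder bounds (its formulas \eqref{C3a}--\eqref{C3b}) and carries out that error analysis in Lemma \ref{AA} — which is not a difference of substance.
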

\begin{proof}
Note first that, by Diagram Formula (see \cite{MaPeCUP} Section 4.3.1),
\begin{align*}
\mathbb{E}\left[ H_{4}(Y_{2}(\bar{x}))H_{4}(f_{\ell }(y(\phi )))\right]
&=4!\left\{ \mathbb{E}\left[ Y_{2}(\bar{x}))f_{\ell }(y(\phi ))\right]
\right\} ^{4} \\
&=4!\left\{ \mathbb{E}\left[ \sqrt{\frac{2}{\ell (\ell +1)}}\partial
_{2;x}f_{\ell }(x)f_{\ell }(y(\phi ))\right] \right\} ^{4} \\
&=4!\frac{2^{2}}{\ell ^{2}(\ell +1)^{2}}\left\{ \mathbb{E}\left[ \partial
_{2;x}f_{\ell }(x)f_{\ell }(y(\phi ))\right] \right\} ^{4}.
\end{align*}
Now we have easily
\begin{equation*}
\left\langle x,y\right\rangle =\cos \theta _{x}\cos \theta _{y}+\sin \theta
_{x}\sin \theta _{y}\cos (\varphi _{x}-\varphi _{y}),
\end{equation*}
and
\begin{align*}
\mathbb{E}\left[ \partial _{2;x}f_{\ell }(x)f_{\ell }(y(\phi ))\right]
&=\left. \frac{1}{\sin \theta }\frac{\partial }{\partial \varphi _{x}}
P_{\ell }(\left\langle x,y\right\rangle )\right\vert _{x=\overline{x},y=y(\phi )} \\
&=-\left. \frac{1}{\sin \theta }P_{\ell }^{\prime }(\left\langle
x,y\right\rangle )\sin \theta _{x}\sin \theta _{y}\sin (\varphi _{x}-\varphi
_{y})\right\vert _{x=\overline{x},y=y(\phi )} \\
&=P_{\ell }^{\prime }(\cos \phi )\sin (\phi ).
\end{align*}
Thus we obtain
\begin{align*}
\int_{0}^{\pi /2}\mathbb{E}\left[ H_{4}(Y_{2}(\bar{x}))H_{4}(f_{\ell
}(y(\phi )))\right] \sin \phi d\phi &=4!\frac{2^{2}}{\ell ^{2}(\ell +1)^{2}}
\int_{0}^{\pi /2}\left\{ P_{\ell }^{\prime }(\cos \phi )\sin (\phi )\right\}
^{4}\sin \phi d\phi \\
&=4!\frac{2^{2}}{\ell ^{2}(\ell +1)^{2}}\frac{3\ell ^{4}}{\pi ^{2}}\frac{
\log \ell }{\ell ^{2}}+O(\ell ^{-2}) \\
&=4!\frac{12}{\pi ^{2}}\frac{\log \ell }{\ell ^{2}}+O(\ell ^{-2}),
\end{align*}
using, see Lemma \ref{AA} below,
\begin{equation*}
\int_{0}^{\pi /2}\;[P_{\ell }^{(r)}(\cos \phi )\sin ^{r}\phi ]^{4}\sin \phi
d\phi =\frac{3\ell ^{4r}}{\pi ^{2}}\frac{\log \ell }{2\ell ^{2}}+O(\ell
^{4r-2}).
\end{equation*}
\end{proof}

\begin{lemma}
\label{domin2} As $\ell \rightarrow \infty $
\begin{equation*}
\int_{0}^{\pi /2}\mathbb{E}\left[ H_{4}(Y_{5}(\bar{x}))H_{4}(f_{\ell
}(y(\phi )))\right] \sin \phi d\phi =4!3^{2}\frac{3}{\pi ^{2}}\frac{\log
\ell }{2\ell ^{2}}+O(\ell ^{-2}).
\end{equation*}
\end{lemma}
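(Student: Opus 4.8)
The plan is to mirror exactly the scheme of Lemma \ref{domin1}, the only new feature being that $Y_5$ is a linear combination of two second-order derivatives rather than a single first-order derivative. First I would invoke the Diagram Formula to collapse the fourth moment into a single fourth power,
\begin{equation*}
\mathbb{E}\left[H_4(Y_5(\bar{x}))H_4(f_\ell(y(\phi)))\right] = 4!\left\{\mathbb{E}\left[Y_5(\bar{x})f_\ell(y(\phi))\right]\right\}^4,
\end{equation*}
so that the entire computation reduces to the single covariance $\mathbb{E}[Y_5(\bar{x})f_\ell(y(\phi))]$ evaluated on the equator.

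Next I would evaluate that covariance by differentiating the kernel $P_\ell(\langle x,y\rangle)$. Because on the equator $\theta=\pi/2$ the correction terms $\cot\theta\,\partial_a f_\ell$ vanish, $Y_5$ reduces to the combination of the raw second derivatives $\partial_{22;x}f_\ell$ and $\partial_{11;x}f_\ell$ recorded in Section \ref{caos}. A direct chain-rule computation with $\langle x,y\rangle=\cos\theta_x\cos\theta_y+\sin\theta_x\sin\theta_y\cos(\varphi_x-\varphi_y)$, using that the first-order $\theta_x$- and $\varphi_x$-derivatives of $\langle x,y\rangle$ vanish at $\bar{x}=(\pi/2,0)$, gives
\begin{align*}
\mathbb{E}\left[\partial_{11;x}f_\ell(x)f_\ell(y(\phi))\right] &= -\cos\phi\,P_\ell'(\cos\phi), \\
\mathbb{E}\left[\partial_{22;x}f_\ell(x)f_\ell(y(\phi))\right] &= \sin^2\phi\,P_\ell''(\cos\phi)-\cos\phi\,P_\ell'(\cos\phi).
\end{align*}
Combining these with the coefficients defining $Y_5$ yields
\begin{equation*}
\mathbb{E}\left[Y_5(\bar{x})f_\ell(y(\phi))\right]=c_1\,\sin^2\phi\,P_\ell''(\cos\phi)+r_\ell(\phi), \qquad c_1:=\frac{\sqrt{3\lambda_\ell-2}}{\lambda_\ell\sqrt{\lambda_\ell-2}},
\end{equation*}
where $r_\ell(\phi)$ is a multiple of $\cos\phi\,P_\ell'(\cos\phi)$ whose coefficient is $O(\lambda_\ell^{-1})$ (it collects the two first-derivative contributions).

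I would then isolate the dominant term. Since $c_1\sim\sqrt{3}/\lambda_\ell$ and $\sin^2\phi\,P_\ell''(\cos\phi)$ is of order $\ell^2$ while $\cos\phi\,P_\ell'(\cos\phi)$ is only of order $\ell$, the second-derivative term dominates. Raising to the fourth power and retaining the leading contribution,
\begin{equation*}
\left\{\mathbb{E}\left[Y_5(\bar{x})f_\ell(y(\phi))\right]\right\}^4=\frac{(3\lambda_\ell-2)^2}{\lambda_\ell^4(\lambda_\ell-2)^2}\,[\sin^2\phi\,P_\ell''(\cos\phi)]^4+(\text{lower order}),
\end{equation*}
with prefactor asymptotic to $9/\lambda_\ell^4\sim 9/\ell^8$. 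Integrating against $\sin\phi$ and applying Lemma \ref{AA} with $r=2$, namely $\int_0^{\pi/2}[P_\ell''(\cos\phi)\sin^2\phi]^4\sin\phi\,d\phi=\frac{3\ell^8}{\pi^2}\frac{\log\ell}{2\ell^2}+O(\ell^6)$, produces
\begin{equation*}
4!\cdot\frac{9}{\ell^8}\cdot\frac{3\ell^8}{\pi^2}\frac{\log\ell}{2\ell^2}=4!\,3^2\,\frac{3}{\pi^2}\frac{\log\ell}{2\ell^2},
\end{equation*}
which is exactly the claimed leading constant.

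The main obstacle is the final bookkeeping: I must verify that every term in the binomial expansion of the fourth power other than $c_1^4[\sin^2\phi\,P_\ell'']^4$—in particular all cross terms pairing $P_\ell''$ with $P_\ell'$ and the pure $[\cos\phi\,P_\ell']^4$ term—contributes only $O(\ell^{-2})$ and, crucially, does \emph{not} carry the logarithmic enhancement. The key point is that the $\log\ell$ in Lemma \ref{AA} stems from a resonance specific to the pure quartic self-interaction of the top-order derivative; the mixed integrals $\int_0^{\pi/2}[P_\ell''\sin^2\phi]^{a}[P_\ell'\cos\phi]^{b}\sin\phi\,d\phi$ with $b\ge 1$ lose a power of $\ell$ per downgraded factor and lose the logarithm, so after multiplication by the $O(\ell^{-8})$ prefactors they are absorbed into $O(\ell^{-2})$. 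I would make this precise through the Legendre asymptotics underlying Lemma \ref{AA}, or by citing the analogous estimates in \cite{CW}; this is the only genuinely technical step.
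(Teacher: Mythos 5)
Your proposal is correct and takes essentially the same route as the paper's proof: Diagram Formula, computation of $\mathbb{E}\left[\partial_{11;x}f_{\ell}(x)f_{\ell}(y(\phi))\right]$ and $\mathbb{E}\left[\partial_{22;x}f_{\ell}(x)f_{\ell}(y(\phi))\right]$ in terms of $P_{\ell}'$ and $P_{\ell}''$, expansion of the fourth power of the covariance, and Lemma \ref{AA} with $r=2$ for the leading term. The cross-term estimate that you flag as the ``only genuinely technical step'' is precisely the paper's Lemma \ref{BB}, which bounds every mixed integral by $O(\ell^{6})$ with no logarithmic factor, so that after multiplication by the $O(\ell^{-8})$ coefficients these terms are absorbed into $O(\ell^{-2})$, exactly as you anticipated.
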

\begin{proof}
As before, note first that
\begin{align*}
&\mathbb{E}\left[ H_{4}(Y_{5}(\bar{x}))H_{4}(f_{\ell }(y(\phi )))\right]\\
&=4!\left\{ \mathbb{E}\left[ Y_{5}(\bar{x})f_{\ell }(y(\phi ))\right]
\right\} ^{4}\\
&=4!\left\{ \mathbb{E}\left[ \left( \frac{\sqrt{3\lambda _{\ell }-2}}{
\lambda _{\ell }\sqrt{{\lambda _{\ell }-2}}}\partial _{22;x}f_{\ell }(x)-
\frac{(\lambda _{\ell }+2)}{\lambda _{\ell }\sqrt{{\lambda _{\ell }-2}}\sqrt{
3\lambda _{\ell }-2}}\partial _{11;x}f_{\ell }(x)\right) f_{\ell }(y(\phi ))
\right] \right\} ^{4} \\
&=4!\left\{ \mathbb{E}\left[ \left( \alpha _{1\ell }\partial _{22;x}f_{\ell
}(x)-\alpha _{2\ell }\partial _{11;x}f_{\ell }(x)\right) f_{\ell }(y(\phi ))
\right] \right\} ^{4},
\end{align*}
where we wrote
\begin{equation*}
\alpha _{1\ell }:=\frac{\sqrt{3\lambda _{\ell }-2}}{\lambda _{\ell }\sqrt{{
\lambda _{\ell }-2}}}, \;\;\;\; \alpha _{2\ell }:=\frac{(\lambda _{\ell }+2)}{
\lambda _{\ell }\sqrt{{\lambda _{\ell }-2}}\sqrt{3\lambda _{\ell }-2}};
\end{equation*}
note that
\begin{equation*}
\alpha _{1\ell }=\frac{\sqrt{3}}{\ell ^{2}}+O(\frac{1}{\ell ^{3}}), \;\;\;\;
\alpha _{2\ell }=\frac{1}{\sqrt{3}\ell ^{2}}+O(\frac{1}{\ell ^{3}}).
\end{equation*}
Now
\begin{align*}
\mathbb{E}\left[ \partial _{22;x}f_{\ell }(x)f_{\ell }(y(\phi ))\right]
&=\left. \frac{1}{\sin ^{2}\theta _{x}}\frac{\partial ^{2}}{\partial
\varphi _{x}^{2}}P_{\ell }(\left\langle x,y\right\rangle )\right\vert _{x=\overline{x},y=y(\phi )}
=P_{\ell }^{\prime \prime }(\cos \phi )\sin ^{2}\phi -P_{\ell }^{\prime
}(\cos \phi )\cos \phi.
\end{align*}
Likewise
\begin{align*}
\mathbb{E}\left[ \partial _{11;x}f_{\ell }(x)f_{\ell }(y(\phi )\right]
&=\left. \frac{\partial ^{2}}{\partial \theta _{x}^{2}}P_{\ell
}(\left\langle x,y\right\rangle )\right\vert _{x=\overline{x},y=y(\phi )}
=-P_{\ell }^{\prime }(\cos \phi )\cos \phi.
\end{align*}
Thus we obtain
\begin{align*}
&\left\{ \mathbb{E}\left[ \left( \alpha _{1\ell }\partial _{22;x}f_{\ell
}(x)-\alpha _{2\ell }\partial _{11;x}f_{\ell }(x)\right) f_{\ell }(y(\phi )
\right] \right\} ^{4}\\
&=\alpha _{1\ell }^{4}\left\{ P_{\ell }^{\prime \prime }(\cos \phi )\sin
^{2}\phi +P_{\ell }^{\prime }(\cos \phi )\cos \phi \right\} ^{4} \\
&\;\;+4\alpha _{1\ell }^{3}\alpha _{2\ell }\left\{ P_{\ell }^{\prime \prime
}(\cos \phi )\sin ^{2}\phi +P_{\ell }^{\prime }(\cos \phi )\cos \phi
\right\} ^{3}P_{\ell }^{\prime }(\cos \phi )\cos \phi \\
&\;\;+6\alpha _{1\ell }^{2}\alpha _{2\ell }^{2}\left\{ P_{\ell }^{\prime \prime
}(\cos \phi )\sin ^{2}\phi +P_{\ell }^{\prime }(\cos \phi )\cos \phi
\right\} ^{2}\left\{ P_{\ell }^{\prime }(\cos \phi )\cos \phi \right\} ^{2}
\\
&\;\;+4\alpha _{1\ell }\alpha _{2\ell }^{3}\left\{ P_{\ell }^{\prime \prime
}(\cos \phi )\sin ^{2}\phi +P_{\ell }^{\prime }(\cos \phi )\cos \phi
\right\} \left\{ P_{\ell }^{\prime }(\cos \phi )\cos \phi \right\} ^{3} \\
&\;\;+\alpha _{2\ell }^{4}\left\{ P_{\ell }^{\prime }(\cos \phi )\cos \phi
\right\} ^{4}.
\end{align*}
Now, again using Lemma \ref{AA} below,
\begin{equation*}
\int_{0}^{\pi /2}\left\{ P_{\ell }^{\prime \prime }(\cos \phi )\sin ^{2}\phi
\right\} ^{4}\sin \phi d\phi =\frac{3\ell ^{8}}{2\pi ^{2}}\frac{\log \ell }{
\ell ^{2}}+O(\ell ^{6})
\end{equation*}
and exploiting instead Lemma \ref{BB}
\begin{equation*}
\int_{0}^{\pi /2}\left\{ P_{\ell }^{\prime \prime }(\cos \phi )\sin ^{2}\phi
\right\} ^{k}\left\{ P_{\ell }^{\prime }(\cos \phi )\cos \phi \right\}
^{4-k}\sin \phi d\phi =O(\ell ^{6}), \text{ for all }k=1,\dots  4.
\end{equation*}
Noting that, for $k=1,\dots 4$,
\begin{equation*}
\alpha _{1\ell }^{4}=\frac{3^{2}}{\ell ^{8}}+O(\ell ^{-7}) \text{ and }
 \alpha _{1\ell }^{k}\alpha _{2\ell }^{4-k}=O(\ell ^{-8}),
 \end{equation*}
the proof is completed.
\end{proof}

\begin{lemma} \label{domin3}
As $ \ell \rightarrow \infty$,
\begin{equation*}
\int_{0}^{\pi /2}\mathbb{E}\left[ H_{2}(Y_{2}(\bar{x}))H_{2}(Y_{5}(\bar{x}
))H_{4}(f_{\ell }(y(\phi )))\right] \sin \phi d\phi =4!(2\cdot 3)\frac{1}{
\pi ^{2}}\frac{\log \ell }{2\ell ^{2}}+O(\ell^{-2}).
\end{equation*}
\end{lemma}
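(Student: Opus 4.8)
The plan is to mirror the proofs of Lemmas~\ref{domin1} and~\ref{domin2}: reduce the expectation to a product of two covariances by the Diagram Formula, substitute the Legendre-kernel expressions already obtained there, isolate the genuinely dominant piece, and finally invoke the Hilb-type integral asymptotics of the Appendix.

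First I would apply the Diagram Formula to $\mathbb{E}[H_2(Y_2(\bar x))H_2(Y_5(\bar x))H_4(f_\ell(y(\phi)))]$. Since $Y_2(\bar x)$ and $Y_5(\bar x)$ are distinct, hence independent, components of the standardized Gaussian vector $Y(\bar x)$, we have $\mathbb{E}[Y_2(\bar x)Y_5(\bar x)]=0$, so every diagram containing an edge between a leg of $H_2(Y_2)$ and a leg of $H_2(Y_5)$ vanishes. As no flat edges are allowed inside a single Hermite factor, the only surviving pairings send the two legs of $H_2(Y_2)$ and the two legs of $H_2(Y_5)$ bijectively onto the four legs of $H_4(f_\ell)$; counting the $4!$ such bijections gives
\begin{equation*}
\mathbb{E}[H_2(Y_2(\bar x))H_2(Y_5(\bar x))H_4(f_\ell(y(\phi)))]=4!\,\{\mathbb{E}[Y_2(\bar x)f_\ell(y(\phi))]\}^2\{\mathbb{E}[Y_5(\bar x)f_\ell(y(\phi))]\}^2.
\end{equation*}

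Next I would insert the covariances computed inside Lemmas~\ref{domin1} and~\ref{domin2}, namely
\begin{equation*}
\mathbb{E}[Y_2(\bar x)f_\ell(y(\phi))]=\sqrt{\tfrac{2}{\lambda_\ell}}\,P_\ell'(\cos\phi)\sin\phi,
\end{equation*}
\begin{equation*}
\mathbb{E}[Y_5(\bar x)f_\ell(y(\phi))]=\alpha_{1\ell}\,P_\ell''(\cos\phi)\sin^2\phi+(\alpha_{2\ell}-\alpha_{1\ell})\,P_\ell'(\cos\phi)\cos\phi,
\end{equation*}
where $\alpha_{1\ell}=\sqrt3/\ell^2+O(\ell^{-3})$ and $\alpha_{2\ell}=1/(\sqrt3\,\ell^2)+O(\ell^{-3})$. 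Squaring the second covariance and retaining only the dominant summand $\alpha_{1\ell}^2\{P_\ell''(\cos\phi)\sin^2\phi\}^2$, the leading contribution to the integral is
\begin{equation*}
4!\,\frac{2}{\lambda_\ell}\,\alpha_{1\ell}^2\int_0^{\pi/2}\{P_\ell'(\cos\phi)\sin\phi\}^2\{P_\ell''(\cos\phi)\sin^2\phi\}^2\sin\phi\,d\phi.
\end{equation*}
Using the Hilb-type asymptotics $P_\ell'(\cos\phi)\sin\phi\sim(\ell+\tfrac12)\sqrt{\phi/\sin\phi}\,J_1((\ell+\tfrac12)\phi)$ and $P_\ell''(\cos\phi)\sin^2\phi\sim-(\ell+\tfrac12)^2\sqrt{\phi/\sin\phi}\,J_0((\ell+\tfrac12)\phi)$, the change of variables $\psi=(\ell+\tfrac12)\phi$ turns this mixed integral into a multiple of $\int\psi\,J_0^2(\psi)J_1^2(\psi)\,d\psi$, and its logarithmic divergence yields
\begin{equation*}
\int_0^{\pi/2}\{P_\ell'(\cos\phi)\sin\phi\}^2\{P_\ell''(\cos\phi)\sin^2\phi\}^2\sin\phi\,d\phi=\frac{\ell^6}{\pi^2}\frac{\log\ell}{2\ell^2}+O(\ell^4).
\end{equation*}
Feeding in $2/\lambda_\ell\sim2/\ell^2$ and $\alpha_{1\ell}^2\sim3/\ell^4$ then produces $4!\,(2\cdot3)\frac{1}{\pi^2}\frac{\log\ell}{2\ell^2}$, the asserted leading term.

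Finally I would show that the two remaining summands of $\{\mathbb{E}[Y_5 f_\ell]\}^2$ are negligible. The cross term $2\alpha_{1\ell}(\alpha_{2\ell}-\alpha_{1\ell})\{P_\ell''\sin^2\phi\}\{P_\ell'\cos\phi\}$ and the term $(\alpha_{2\ell}-\alpha_{1\ell})^2\{P_\ell'\cos\phi\}^2$, once multiplied by $\tfrac{2}{\lambda_\ell}\{P_\ell'\sin\phi\}^2\sin\phi$ and integrated, carry prefactors of order $\ell^{-6}$, whereas the associated Legendre integrals (which now contain at least one factor $P_\ell'\cos\phi$) are $O(\ell^4)$ by the same envelope estimates underlying Lemma~\ref{BB}; hence both contribute only $O(\ell^{-2})$. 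The hard part will be the precise evaluation of the dominant mixed integral: in contrast with the pure fourth-power integrals of Lemma~\ref{AA}, whose oscillatory content averages as $\langle J^4\rangle$ and produces the constant $3$, here the relevant average is $\langle J_0^2 J_1^2\rangle=1/(2\pi^2\psi^2)$, which replaces that constant by $1$. Pinning down this constant (and the $O(\ell^4)$ remainder) rather than only the order of magnitude is the crux, and is exactly what the Appendix lemmas are designed to deliver.
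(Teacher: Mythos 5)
Your proposal is correct and takes essentially the same route as the paper's proof: Diagram Formula reduction to $4!\,\{\mathbb{E}[Y_2(\bar x)f_\ell(y(\phi))]\}^2\{\mathbb{E}[Y_5(\bar x)f_\ell(y(\phi))]\}^2$, substitution of the Legendre covariance kernels from Lemmas \ref{domin1}--\ref{domin2}, isolation of the dominant $\alpha_{1\ell}^2\{P_\ell''(\cos\phi)\sin^2\phi\}^2$ contribution, and the mixed-integral asymptotics (the Remark after Lemma \ref{AA} gives exactly your constant, $2+(-1)^{r_1+r_2}=1$ for $r_1=1$, $r_2=2$), with the cross terms killed by Lemma \ref{BB}-type bounds. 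Your Bessel-average observation $\langle J_0^2J_1^2\rangle=1/(2\pi^2\psi^2)$ is the same mechanism the paper implements through the identity $\cos^2\bigl(\tfrac{2\ell+1}{2}\phi+\tfrac{\pi}{4}\bigr)\cos^2\bigl(\tfrac{2\ell+1}{2}\phi-\tfrac{\pi}{4}\bigr)=\tfrac14\cos^2((2\ell+1)\phi)$, so the crux constant you flag is precisely what the Appendix already delivers.
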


\begin{proof}
Again by Diagram Formula, we have that
\begin{align*}
&\mathbb{E}\left[ H_{2}(Y_{2}(\bar{x}))H_{2}(Y_{5}(\bar{x}))H_{4}(f_{\ell
}(y(\phi )))\right] \\
&=24\left\{ \mathbb{E}\left[ Y_{2}(\bar{x})f_{\ell
}(y(\phi ))\right] \right\}^{2} \left\{ \mathbb{E}\left[ Y_{5}(\bar{x}
)f_{\ell }(y(\phi ))\right] \right\}^{2}\\
&=24\left\{ \mathbb{E}\left[ \sqrt{\frac{2}{\lambda _{\ell }}}\partial
_{2;x}f_{\ell }(x)f_{\ell }(y(\phi )\right] \right\} ^{2}\left\{ \mathbb{E}
\left[ \alpha _{1\ell }\partial _{22;x}f_{\ell }(x)-\alpha _{2l}\partial
_{11;x}f_{\ell }(x)f_{\ell }(y(\phi )\right] \right\} ^{2} \\
&=24\frac{2}{\lambda _{\ell }}\left\{ P_{\ell }^{\prime }(\cos \phi )\sin
(\phi )\right\} ^{2}\left\{ \alpha _{1\ell }(P_{\ell }^{\prime \prime }(\cos
\phi )\sin ^{2}\phi -P_{\ell }^{\prime }(\cos \phi )\cos \phi )+\alpha
_{2l}P_{\ell }^{\prime }(\cos \phi )\cos \phi \right\} ^{2}.
\end{align*}
Now using repeatedly Lemma \ref{AA} and Lemma \ref{BB} we obtain
\begin{align*}
&\int_{0}^{\pi /2}\left\{ P_{\ell }^{\prime }(\cos \phi )\sin \phi \right\}
^{2}\left\{ \alpha _{1\ell }(P_{\ell }^{\prime \prime }(\cos \phi )\sin
^{2}\phi -P_{\ell }^{\prime }(\cos \phi )\cos \phi )+\alpha _{2l}P_{\ell
}^{\prime }(\cos \phi )\cos \phi \right\} ^{2}\sin \phi d\phi \\
&=\frac{3}{\pi ^{2}}\frac{\log \ell }{2}+O(1)\text{ ,}
\end{align*}
and thus the conclusion follows.
\end{proof}

\subsection{Subdominant terms}

The behaviour of subdominant terms can be characterized rather easily, as
follows.
\begin{lemma} \label{subdomin1}
As $\ell \rightarrow \infty$, for $a=1, 3, 4$,
\begin{equation*}
\int_{0}^{\pi /2}\mathbb{E}\left[ H_{4}(Y_{a}(\bar{x}))H_{4}(f_{\ell
}(y(\phi )))\right] \sin \phi d\phi =O(\ell ^{-2}).
\end{equation*}
\end{lemma}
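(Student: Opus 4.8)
The plan is to follow the template of Lemma~\ref{domin1}. By the Diagram Formula (\cite{MaPeCUP}, Section 4.3.1), writing $r_a(\phi):=\mathbb{E}[Y_a(\bar{x})f_\ell(y(\phi))]$ for the single cross-covariance, one has
\begin{equation*}
\mathbb{E}\left[H_4(Y_a(\bar{x}))H_4(f_\ell(y(\phi)))\right]=4!\,\{r_a(\phi)\}^4,
\end{equation*}
so the whole problem reduces to evaluating $r_a$ on the equator and controlling the order in $\ell$ of $\int_0^{\pi/2}\{r_a(\phi)\}^4\sin\phi\,d\phi$.

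First I would treat $a=1$ and $a=4$, where I expect $r_a$ to vanish \emph{identically} on the equator. Since $\mathbb{E}[f_\ell(x)f_\ell(y)]=P_\ell(\langle x,y\rangle)$, differentiating in $x$ brings out the factors $\partial_{\theta_x}\langle x,y\rangle$ and $\partial_{\theta_x}\partial_{\varphi_x}\langle x,y\rangle$, both of which are zero at $x=\bar{x}=(\tfrac{\pi}{2},0)$, $y=y(\phi)=(\tfrac{\pi}{2},\phi)$ because $\cos\theta_x=0$ there. Thus $r_1(\phi)\propto\mathbb{E}[\partial_{1;x}f_\ell(x)f_\ell(y(\phi))]=P_\ell'(\cos\phi)\,\partial_{\theta_x}\langle x,y\rangle|_{\mathrm{eq}}=0$, and similarly each of the two terms arising in $\partial_{\theta_x}\partial_{\varphi_x}P_\ell(\langle x,y\rangle)$ carries a factor that vanishes on the equator, so $r_4(\phi)=0$. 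Hence the integrals for $a=1,4$ are exactly zero, which is certainly $O(\ell^{-2})$.

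The only substantive case is $a=3$. Using the expression for $Y_3$ and the identity $\mathbb{E}[\partial_{11;x}f_\ell(x)f_\ell(y(\phi))]=-P_\ell'(\cos\phi)\cos\phi$ already established in the proof of Lemma~\ref{domin2}, I would obtain
\begin{equation*}
r_3(\phi)=-\frac{2\sqrt{2}}{\sqrt{\lambda_\ell}\sqrt{3\lambda_\ell-2}}\,P_\ell'(\cos\phi)\cos\phi,\qquad\left(\frac{2\sqrt{2}}{\sqrt{\lambda_\ell}\sqrt{3\lambda_\ell-2}}\right)^4\sim\frac{64}{9\ell^8},
\end{equation*}
so that the case $a=3$ of the lemma is equivalent to the estimate
\begin{equation*}
\int_0^{\pi/2}\{P_\ell'(\cos\phi)\cos\phi\}^4\sin\phi\,d\phi=O(\ell^6).
\end{equation*}

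The hard part is precisely this last bound, which is the $k=0$ counterpart of Lemma~\ref{BB} and is not covered by its statement. The key difference from the dominant term of Lemma~\ref{domin1} is that here the accompanying factor is $\cos\phi$ rather than $\sin\phi$: in $\{P_\ell'(\cos\phi)\sin\phi\}^4$ the extra vanishing of $\sin\phi$ at the pole damps the contribution of $\phi\to0$ and leaves the logarithmic (Berry-type) term of Lemma~\ref{AA}, whereas with $\cos\phi\to1$ there is no such damping. I would split $[0,\pi/2]=[0,c/\ell]\cup[c/\ell,\pi/2]$: on the polar piece I would use $P_\ell'(\cos\phi)\sim P_\ell'(1)=\tfrac12\ell(\ell+1)$ and $\sin\phi\sim\phi$, giving a contribution of order $\ell^8\cdot\ell^{-2}=\ell^6$; on the bulk I would use the Hilb-type bound $|P_\ell'(\cos\phi)|\le C\,\ell^{1/2}\sin^{-3/2}\phi$, whence the integrand is $\le C\,\ell^2\cos^4\phi\,\sin^{-5}\phi$ and the integral is again governed by the endpoint $\phi\sim1/\ell$, contributing $O(\ell^6)$ with no extra $\log\ell$. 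Collecting the two pieces and multiplying by $4!\cdot\tfrac{64}{9}\ell^{-8}$ yields the claimed $O(\ell^{-2})$. The main obstacle is thus making this split rigorous and uniform, i.e.\ controlling $P_\ell'(\cos\phi)$ simultaneously near the pole and in the oscillatory bulk; this is exactly the type of estimate provided by the auxiliary Lemmas~\ref{AA}--\ref{BB}, and I would prove the required $k=0$ case by the same argument.
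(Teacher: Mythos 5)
Your proposal is correct and follows essentially the same route as the paper's proof: the Diagram Formula reduces each term to $4!\,\{r_a(\phi)\}^4$, the cross-covariances $r_1$ and $r_4$ vanish identically on the equator, and for $a=3$ the covariance is a normalizing factor of order $\ell^{-2}$ times $P_\ell'(\cos\phi)\cos\phi$, so the claim follows from a fourth-power Legendre estimate of order $O(\ell^{6})$ (your $\cos\phi$ is in fact the correct factor, consistent with Lemmas \ref{domin2} and \ref{odd1}; the paper's own proof of this lemma writes $\sin^2\phi$ there, a harmless slip). The only inaccuracy is your claim that the bound $\int_0^{\pi/2}\{P_\ell'(\cos\phi)\cos\phi\}^4\sin\phi\,d\phi=O(\ell^{6})$ is an uncovered ``$k=0$'' case of Lemma \ref{BB}: since $|\cos\phi|\le 1$ it is exactly the $k=4$ case, recorded explicitly in the remark following that lemma, so your direct re-derivation (which is itself sound, and matches the argument used to prove Lemma \ref{BB}) is not actually needed.
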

\begin{proof}
For $a=1$, we have that
\begin{align*}
\mathbb{E}\left[ H_{4}(Y_{1}(\bar{x}))H_{4}(f_{\ell }(y(\phi )))\right]
&=4!\left\{ \mathbb{E}\left[ Y_{1}(\bar{x})f_{\ell }(y(\phi ))\right]
\right\} ^{4} \\
&=4!\left\{ \mathbb{E}\left[ \sqrt{\frac{2}{\ell (\ell +1)}}\partial
_{1;x}f_{\ell }(x)f_{\ell }(y(\phi )\right] \right\} ^{4} \\
&=4!\frac{2^{2}}{\ell ^{2}(\ell +1)^{2}}\left\{ \mathbb{E}\left[ \partial
_{1;x}f_{\ell }(x)f_{\ell }(y(\phi )\right] \right\} ^{4}.
\end{align*}
Now we have easily
\begin{align*}
\mathbb{E}\left[ \partial _{1;x}f_{\ell }(x)f_{\ell }(y(\phi ))\right]
&=\left. \frac{\partial }{\partial \theta }P_{\ell }(\left\langle
x,y\right\rangle )\right\vert _{x=\overline{x},y=y(\phi )} \\
&=\left. P_{\ell }^{\prime }(\left\langle x,y\right\rangle )\left\{ -\sin
\theta _{x}\cos \theta _{y}+\cos \theta _{x}\sin \theta _{y}\sin (\varphi
_{x}-\varphi _{y})\right\} \right\vert _{x=\overline{x},y=y(\phi )} \\
&=0.
\end{align*}
Similarly
\begin{equation*}
\mathbb{E}\left[ H_{4}(Y_{3}(\bar{x}))H_{4}(f_{\ell }(y(\phi )))\right] =4!
\frac{8^{2}}{\lambda _{\ell }^{2}(3\lambda _{\ell }-2)^{2}}\left\{ \mathbb{E}
\left[ \partial _{11;x}f_{\ell }(x)f_{\ell }(y(\phi )\right] \right\} ^{4},
\end{equation*}
and
\begin{align*}
&\mathbb{E}\left[ \partial _{11;x}f_{\ell }(x)f_{\ell }(y(\phi) )\right] \\
&=\left. P_{\ell }^{\prime \prime }(\left\langle x,y\right\rangle )\left\{
-\sin \theta _{x}\cos \theta _{y}+\cos \theta _{x}\sin \theta _{y}\sin
(\varphi _{x}-\varphi _{y})\right\} ^{2}\right\vert _{x=\overline{x},y=y(\phi )} \\
&=+\left. P_{\ell }^{\prime }(\left\langle x,y\right\rangle )\left\{ -\cos
\theta _{x}\cos \theta _{y}-\sin \theta _{x}\sin \theta _{y}\sin (\varphi
_{x}-\varphi _{y})\right\} ^{2}\right\vert _{x=\overline{x},y=y(\phi )} \\
&=-P_{\ell }^{\prime }(\cos \phi )\sin ^{2}\phi,
\end{align*}
whence
\begin{equation*}
\int_{0}^{\pi /2}\mathbb{E}\left[ H_{4}(Y_{a}(\bar{x}))H_{4}(f_{\ell
}(y(\phi )))\right] \sin \phi d\phi =4!\frac{8^{2}}{\lambda _{\ell
}^{2}(3\lambda _{\ell }-2)^{2}}\int_{0}^{\pi /2}\left\{ P_{\ell }^{\prime
}(\cos \phi )\sin ^{2}\phi \right\} ^{4}\sin \phi d\phi =O(\ell^{-6}).
\end{equation*}
Finally
\begin{equation*}
\mathbb{E}\left[ H_{4}(Y_{4}(\bar{x}))H_{4}(f_{\ell }(y(\phi )))\right] =4!
\frac{8^{2}}{\lambda _{\ell }^{2}(\lambda _{\ell }-2)^{2}}\left\{ \mathbb{E}
\left[ \partial _{21;x}f_{\ell }(x)f_{\ell }(y(\phi) )\right] \right\}^{4},
\end{equation*}
where
\begin{align*}
&\mathbb{E}\left[ \partial _{21;x}f_{\ell }(x)f_{\ell }(y(\phi) )\right] \\
&=\left. \frac{1}{\sin \theta _{x}}P_{\ell }^{\prime \prime }(\left\langle
x,y\right\rangle )\left\{ -\sin \theta _{x}\cos \theta _{y}+\cos \theta
_{x}\sin \theta _{y}\sin (\varphi _{x}-\varphi _{y})\right\} ^{2}\right\vert
_{x=\overline{x},y=y(\phi )} \\
&+\left. \frac{1}{\sin \theta _{x}}P_{\ell }^{\prime }(\left\langle
x,y\right\rangle )\left\{ \cos \theta _{x}\sin \theta _{y}\cos (\varphi
_{x}-\varphi _{y})\right\} ^{2}\right\vert _{x=\overline{x},y=y(\phi )} \\
&=0.
\end{align*}
\end{proof}

\begin{lemma}
\label{subdomin2} For $a=1,4,$ we have that
\begin{equation*}
\int_{0}^{\pi /2}\mathbb{E}\left[ H_{2}(Y_{a}(\bar{x}))H_{2}(Y_{c}(\bar{x}
))H_{4}(f_{\ell }(y(\phi )))\right] \sin \phi d\phi =0, \;\; \text{ where }c=1,\dots 5,
\text{ }c\neq a.
\end{equation*}
\end{lemma}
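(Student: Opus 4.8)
The plan is to reduce the claim to the vanishing of a single Gaussian correlation, in exactly the same spirit as the proof of Lemma \ref{domin3}. First I would apply the Diagram Formula to
\[
\mathbb{E}\left[ H_{2}(Y_{a}(\bar{x})) H_{2}(Y_{c}(\bar{x})) H_{4}(f_{\ell}(y(\phi))) \right],
\]
viewing it as a product of three Hermite factors whose legs (two from $H_{2}(Y_{a})$, two from $H_{2}(Y_{c})$, four from $H_{4}(f_{\ell}(y))$) must be completely paired, with no self-loops inside a single factor. The crucial observation is that $Y_{a}(\bar{x})$ and $Y_{c}(\bar{x})$ are distinct, hence independent, components of the standard Gaussian vector $Y(\bar{x})$, so $\mathbb{E}[Y_{a}(\bar{x})Y_{c}(\bar{x})]=0$ for $a\neq c$; consequently no edge can join the $H_{2}(Y_{a})$ block to the $H_{2}(Y_{c})$ block.

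Since each of the four legs carried by $H_{2}(Y_{a})$ and $H_{2}(Y_{c})$ must therefore be matched against one of the four legs of $H_{4}(f_{\ell}(y))$, the only surviving diagrams pair all four $f_{\ell}(y)$-legs with the $Y$-legs, and a routine count of the $\binom{4}{2}\,2!\,2! = 4!$ such pairings yields
\[
\mathbb{E}\left[ H_{2}(Y_{a}(\bar{x})) H_{2}(Y_{c}(\bar{x})) H_{4}(f_{\ell}(y(\phi))) \right] = 4!\,\rho_{a}^{2}\,\rho_{c}^{2}, \qquad \rho_{a} := \mathbb{E}\left[ Y_{a}(\bar{x}) f_{\ell}(y(\phi)) \right],
\]
in complete analogy with the identity used in Lemma \ref{domin3}. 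The integral in the statement then equals $4!\int_{0}^{\pi/2}\rho_{a}^{2}\rho_{c}^{2}\sin\phi\,d\phi$, independently of $\rho_{c}$.

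The proof finishes by noting that $\rho_{a}$ vanishes identically in $\phi$ precisely for $a=1,4$. Indeed, $Y_{1}(\bar{x})$ and $Y_{4}(\bar{x})$ are scalar multiples of $\partial_{1;x}f_{\ell}(\bar{x})$ and $\partial_{21;x}f_{\ell}(\bar{x})$ respectively, and it was already shown in the proof of Lemma \ref{subdomin1} that
\[
\mathbb{E}\left[ \partial_{1;x}f_{\ell}(x) f_{\ell}(y(\phi)) \right] = 0, \qquad \mathbb{E}\left[ \partial_{21;x}f_{\ell}(x) f_{\ell}(y(\phi)) \right] = 0,
\]
so that $\rho_{1}\equiv\rho_{4}\equiv 0$. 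Hence the integrand is identically zero and the integral vanishes for every $c\neq a$.

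There is no substantial obstacle here: the entire content is the vanishing of the two covariances above. One may prefer to justify that vanishing conceptually rather than through the explicit Legendre-polynomial computation of Lemma \ref{subdomin1}: the reflection $\theta\mapsto\pi-\theta$ across the equator fixes both base points $\bar{x}$ and $y(\phi)$ (which lie on the equator) while reversing the sign of any single $\theta$-derivative, so the correlation of $\partial_{1;x}f_{\ell}$ or $\partial_{21;x}f_{\ell}$ with $f_{\ell}(y(\phi))$ must equal its own negative and hence vanish. The only point requiring care is the bookkeeping in the Diagram Formula, namely verifying that the cross term $\mathbb{E}[Y_{a}Y_{c}]$ drops out and that the surviving multiplicity is exactly $4!$; once this is in place the conclusion is immediate.
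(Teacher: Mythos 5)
Your proposal is correct and follows essentially the same route as the paper: the paper likewise invokes the Diagram Formula together with the vanishing covariances $\mathbb{E}[Y_{1}(\bar{x})f_{\ell}(y(\phi))]=\mathbb{E}[Y_{4}(\bar{x})f_{\ell}(y(\phi))]=0$ established in the proof of Lemma \ref{subdomin1}, and simply states the conclusion as immediate. Your spelled-out pairing count giving the factor $4!$ and the alternative reflection-symmetry justification of the vanishing covariances are both sound, but they add detail rather than a different method.
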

\begin{proof}
It was shown in the proof of Lemma \ref{subdomin1} that $\mathbb{E}\left[
Y_{1}(\bar{x}))f_{\ell }(y(\phi )\right] =\mathbb{E}\left[ Y_{4}(\bar{x}
))f_{\ell }(y(\phi )\right] =0.$ The result is then an immediate consequence
of the Diagram Formula.
\end{proof}
\noindent We are then left with only two terms to consider.
\begin{lemma}
\label{subdomin3}For $a=2,5,$ we have that
\begin{equation*}
\int_{0}^{\pi /2}\mathbb{E}\left[ H_{2}(Y_{a}(\bar{x}))H_{2}(Y_{3}(\bar{x}
))H_{4}(f_{\ell }(y(\phi )))\right] \sin \phi d\phi =O(\ell^{-2}).
\end{equation*}
\end{lemma}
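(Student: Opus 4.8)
The plan is to collapse the three-fold expectation to a product of covariances through the Diagram Formula and then to show that the covariance of $Y_3$ with $f_\ell$ always carries a factor $\cos\phi$, which is exactly what suppresses the logarithmic growth present in the dominant Lemmas \ref{domin1}--\ref{domin3}.

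First I would apply the Diagram Formula to $\mathbb{E}[H_2(Y_a(\bar{x}))H_2(Y_3(\bar{x}))H_4(f_\ell(y(\phi)))]$, whose three vertices carry $2$, $2$ and $4$ legs. Since $Y_a$ and $Y_3$ are distinct components of the standardized vector $Y(\bar{x})$ they are independent, so $\mathbb{E}[Y_a(\bar{x})Y_3(\bar{x})]=0$ and every diagram containing an edge between the two $H_2$ vertices vanishes. The surviving diagrams must pair both legs of $Y_a$ and both legs of $Y_3$ with the four legs of $f_\ell$, and there are $4!$ such pairings, so exactly as in Lemma \ref{domin3}
\begin{equation*}
\mathbb{E}[H_2(Y_a(\bar{x}))H_2(Y_3(\bar{x}))H_4(f_\ell(y(\phi)))]=4!\,\{\mathbb{E}[Y_a(\bar{x})f_\ell(y(\phi))]\}^2\{\mathbb{E}[Y_3(\bar{x})f_\ell(y(\phi))]\}^2.
\end{equation*}

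Next I would insert the covariances already produced in the proofs of Lemmas \ref{domin1}, \ref{domin2} and \ref{subdomin1}. Using $\mathbb{E}[\partial_{11;x}f_\ell(x)f_\ell(y(\phi))]=-P_{\ell}^{\prime}(\cos\phi)\cos\phi$ gives
\begin{equation*}
\mathbb{E}[Y_3(\bar{x})f_\ell(y(\phi))]=-\frac{2\sqrt{2}}{\sqrt{\lambda_\ell}\sqrt{3\lambda_\ell-2}}\,P_{\ell}^{\prime}(\cos\phi)\cos\phi,
\end{equation*}
whose prefactor is of order $\ell^{-2}$, while $\mathbb{E}[Y_2(\bar{x})f_\ell(y(\phi))]=\sqrt{2/\lambda_\ell}\,P_{\ell}^{\prime}(\cos\phi)\sin\phi$ and $\mathbb{E}[Y_5(\bar{x})f_\ell(y(\phi))]=\alpha_{1\ell}(P_{\ell}^{\prime\prime}(\cos\phi)\sin^2\phi-P_{\ell}^{\prime}(\cos\phi)\cos\phi)+\alpha_{2\ell}P_{\ell}^{\prime}(\cos\phi)\cos\phi$ with $\alpha_{1\ell},\alpha_{2\ell}=O(\ell^{-2})$. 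The decisive feature is that the $Y_3$ factor is always of the form $P_{\ell}^{\prime}(\cos\phi)\cos\phi$, i.e. decorated by $\cos\phi$ rather than by the ``good'' weight $\sin^{r}\phi$. Substituting, for $a=2$ the integrand becomes proportional to $\lambda_\ell^{-2}(3\lambda_\ell-2)^{-1}\{P_{\ell}^{\prime}(\cos\phi)\sin\phi\}^2\{P_{\ell}^{\prime}(\cos\phi)\cos\phi\}^2$, while for $a=5$ one expands the square into terms $\{P_{\ell}^{\prime\prime}(\cos\phi)\sin^2\phi\}^{k}\{P_{\ell}^{\prime}(\cos\phi)\cos\phi\}^{4-k}$ with $k\le 2$, each carrying the two $\cos\phi$ factors forced by the two $Y_3$-legs.

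The main obstacle, and the content of the third step, is to certify that these $\cos\phi$ factors genuinely remove the logarithm: because no integrand is of the pure $\{P_{\ell}^{(r)}(\cos\phi)\sin^{r}\phi\}^4$ type that produces the $\log\ell$ via Lemma \ref{AA}, one gets instead, by Lemma \ref{BB} for $k=1,2$ and by the same Bessel-asymptotic computation for the residual $\{P_{\ell}^{\prime}(\cos\phi)\cos\phi\}^4$ term (after the substitution $u=\ell\phi$ the weight becomes $J_1(u)^4u^{-3}$, which is integrable at both endpoints), that each angular integral is $O(\ell^{6})$, the $a=2$ integral being in fact $O(\ell^{4})$. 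Multiplying by the prefactor of order $\ell^{-8}$ for $a=5$ (respectively $\ell^{-6}$ for $a=2$) then yields $O(\ell^{-2})$ in both cases, which is the claim. Tracking the precise constants in $\alpha_{1\ell},\alpha_{2\ell}$ and in $\lambda_\ell^{-2}(3\lambda_\ell-2)^{-1}$ is routine.
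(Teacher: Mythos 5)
Your proposal is correct and takes essentially the same route as the paper's proof: reduce via the Diagram Formula to $4!\,\{\mathbb{E}[Y_a(\bar{x})f_{\ell}(y(\phi))]\}^{2}\{\mathbb{E}[Y_3(\bar{x})f_{\ell}(y(\phi))]\}^{2}$, substitute the explicit covariances, and bound the resulting Legendre integrals by the appendix estimates (Lemma \ref{BB}, plus the sharper $O(\ell^{4})$ estimate for the $a=2$ integral, which your Bessel-scaling remark correctly supplies). The one discrepancy is in your favor: you use the correct value $\mathbb{E}[\partial_{11;x}f_{\ell}(\bar{x})f_{\ell}(y(\phi))]=-P_{\ell}'(\cos\phi)\cos\phi$ (consistent with Lemmas \ref{domin2} and \ref{odd1}, and with the Legendre equation), whereas the paper's own proof of this lemma writes $-P_{\ell}'(\cos\phi)\sin^{2}\phi$ in the $a=2$ case, so your intermediate bound $O(\ell^{-2})$ for $a=2$, rather than the paper's $O(\ell^{-4})$, is the accurate one --- and either suffices for the stated $O(\ell^{-2})$.
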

\begin{proof}
We have that
\begin{align*}
&\mathbb{E}\left[ H_{2}(Y_{2}(\bar{x}))H_{2}(Y_{3}(\bar{x}))H_{4}(f_{\ell
}(y(\phi )))\right] \\
&=4!\left\{ \mathbb{E}\left[ Y_{2}(\bar{x})f_{\ell }(y(\phi ))\right]
\right\} ^{2}\left\{ \mathbb{E}\left[ Y_{3}(\bar{x})f_{\ell }(y(\phi ))\right]
\right\} ^{2} \\
&=4!\times \frac{2}{\ell (\ell +1)}\left\{ \mathbb{E}\left[ \partial
_{2;x}f_{\ell }(x)f_{\ell }(y(\phi ))\right] \right\} ^{2}\times \frac{8}{
\lambda _{\ell }(3\lambda _{\ell }-2)}\left\{ \mathbb{E}\left[ \partial
_{11;x}f_{\ell }(x)f_{\ell }(y(\phi) )\right] \right\} ^{2} \\
&=4!\times \frac{2}{\ell (\ell +1)}\left\{ P_{\ell }^{\prime }(\cos \phi
)\sin (\phi )\right\} ^{2}\times \frac{8}{\lambda _{\ell }(3\lambda _{\ell
}-2)}\left\{ P_{\ell }^{\prime }(\cos \phi )\sin ^{2}\phi \right\}^{2},
\end{align*}
and therefore
\begin{align*}
\int_{0}^{\pi /2}\mathbb{E}\left[ H_{2}(Y_{2}(\bar{x}))H_{2}(Y_{3}(\bar{x}
))H_{4}(f_{\ell }(y(\phi )))\right] \sin \phi d\phi
&\leq \text{const} \times \frac{1}{\ell ^{6}}\int_{0}^{\pi /2}\left\{ P_{\ell
}^{\prime }(\cos \phi )\sin (\phi )\right\} ^{4}\sin ^{3}\phi d\phi \\&=O(\ell^{-4}).
\end{align*}
Finally
\begin{align*}
\mathbb{E}\left[ H_{2}(Y_{5}(\bar{x}))H_{2}(Y_{3}(\bar{x}))H_{4}(f_{\ell
}(y(\phi )))\right]&=4!\left\{ \mathbb{E}\left[ Y_{5}(\bar{x})f_{\ell
}(y(\phi) )\right] \right\} ^{2}\left\{ \mathbb{E}\left[ Y_{3}(\bar{x}
)f_{\ell }(y(\phi ))\right] \right\} ^{2}\\
&=4!\left\{ \mathbb{E}\left[ (\alpha _{1\ell }\partial _{22;x}f_{\ell
}(x)-\alpha _{2\ell }\partial _{11;x}f_{\ell }(x))f_{\ell }(y(\phi) )\right]
\right\} ^{2} \\
&\;\;\times \frac{8}{\lambda _{\ell }(3\lambda _{\ell }-2)}\left\{ \mathbb{E}
\left[ \partial _{11;x}f_{\ell }(x)f_{\ell }(y(\phi) )\right] \right\} ^{2} \\
&=4!\left\{ \alpha _{1\ell }(P_{\ell }^{\prime \prime }(\cos \phi )\sin
^{2}(\phi )-P_{\ell }^{\prime }(\cos \phi )\cos \phi )+\alpha _{2\ell
}P_{\ell }^{\prime }(\cos \phi )\cos \phi \right\} ^{2} \\
&\;\; \times \frac{8}{\lambda _{\ell }(3\lambda _{\ell }-2)}\left\{ P_{\ell
}^{\prime }(\cos \phi )\cos \phi \right\} ^{2};
\end{align*}
exploiting again Lemma \ref{BB}, the result follows.
\end{proof}

\begin{lemma} \label{odd1}
As $\ell \to \infty$, for $a, b=3, 5$, $a \ne b$
\begin{align*}
\int_{0}^{\pi / 2} \mathbb{E}[H_{3}(Y_{a}(\bar{x}))H_{1}(Y_{b}(\bar{x})) H_4(f_{\ell}(y(\phi)))] \sin \phi d \phi= O(\ell^{-2}).
\end{align*}
\end{lemma}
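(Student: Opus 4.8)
The plan is to collapse the expectation to a product of four covariances via the Diagram Formula, and then to bound the resulting $\phi$-integral using the asymptotics already recorded in Lemmas~\ref{AA} and~\ref{BB}.

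First I would apply the Diagram Formula (see \cite{MaPeCUP}, Section 4.3.1) to $\mathbb{E}[H_{3}(Y_{a}(\bar{x}))H_{1}(Y_{b}(\bar{x}))H_{4}(f_{\ell}(y(\phi)))]$. Since the components $Y_{1},\dots,Y_{5}$ are independent standard Gaussians, any edge joining the $Y_{a}$-node to the $Y_{b}$-node carries the factor $\mathbb{E}[Y_{a}Y_{b}]=0$; together with the facts that the single leg of $H_{1}(Y_{b})$ must be matched and that no edge may lie inside a node, this forces a unique surviving diagram, in which the $Y_{b}$-leg is paired with one $f_{\ell}$-leg and the three $Y_{a}$-legs are paired with the three remaining $f_{\ell}$-legs. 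Counting the $4!$ such pairings gives
\[
\mathbb{E}[H_{3}(Y_{a}(\bar{x}))H_{1}(Y_{b}(\bar{x}))H_{4}(f_{\ell}(y(\phi)))]=4!\,\{\mathbb{E}[Y_{a}(\bar{x})f_{\ell}(y(\phi))]\}^{3}\{\mathbb{E}[Y_{b}(\bar{x})f_{\ell}(y(\phi))]\}.
\]

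Next I would substitute the covariances already evaluated in the earlier lemmas. Writing $u_{c}(\phi):=\mathbb{E}[Y_{c}(\bar{x})f_{\ell}(y(\phi))]$, the expressions for $Y_{3}$ and $Y_{5}$ in terms of $\partial_{11}f_{\ell}$ and $\partial_{22}f_{\ell}$ show that each of $u_{3}$ and $u_{5}$ equals $O(\ell^{-2})$ times a fixed linear combination, with bounded trigonometric coefficients, of $P_{\ell}'(\cos\phi)$ and $P_{\ell}''(\cos\phi)$ (cf. the computations in Lemmas~\ref{domin2} and~\ref{subdomin1}). The structural point I would stress is that $u_{3}$ carries the $P_{\ell}''$ term with zero coefficient, because $Y_{3}\propto\partial_{11}f_{\ell}$ and, at the equator, $\mathbb{E}[\partial_{11}f_{\ell}(\bar{x})f_{\ell}(y(\phi))]$ involves only $P_{\ell}'$. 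Expanding $u_{a}^{3}u_{b}$, every summand is $O(\ell^{-8})$ times an integral of the form $\int_{0}^{\pi/2}(\text{bounded trig})\prod_{i=1}^{4}P_{\ell}^{(r_{i})}(\cos\phi)\sin\phi\,d\phi$ in which at most three of the four derivative factors are second derivatives: in either ordering $(a,b)=(3,5)$ or $(5,3)$ at least one of the four factors comes from a $u_{3}$, and such a factor contributes only a first derivative.

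Finally I would bound these integrals by Lemmas~\ref{AA} and~\ref{BB}, which give $O(\ell^{6})$ for every product of four such derivative factors, \emph{except} for the single pure configuration $[P_{\ell}''(\cos\phi)\sin^{2}\phi]^{4}$, which by Lemma~\ref{AA} carries an extra $\log\ell$. That configuration requires all four factors to be second derivatives, and is therefore exactly the one excluded above; the residual case of four first derivatives, not literally covered by Lemma~\ref{BB}, is handled directly via $t=\cos\phi$, the integral $\int_{0}^{1}t^{4}[P_{\ell}'(t)]^{4}\,dt$ concentrating near $t=1$ where $P_{\ell}'(t)=O(\ell^{2})$, again giving $O(\ell^{6})$. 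Multiplying the $O(\ell^{-8})$ prefactor by $O(\ell^{6})$ yields $O(\ell^{-2})$ for each summand, hence for the whole integral. The main obstacle is precisely this combinatorial exclusion of the logarithmically divergent term, which is what makes the bound $O(\ell^{-2})$ rather than $O(\ell^{-2}\log\ell)$ and keeps the present contribution subdominant relative to the three leading terms of Lemmas~\ref{domin1}--\ref{domin3}.
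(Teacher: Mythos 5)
Your proposal is correct and follows essentially the same route as the paper's own proof: the Diagram Formula collapses the expectation to $4!\,\{\mathbb{E}[Y_a(\bar{x})f_\ell(y(\phi))]\}^3\,\mathbb{E}[Y_b(\bar{x})f_\ell(y(\phi))]$ (the paper retains an extra term carrying the factor $\mathbb{E}[Y_a(\bar{x})Y_b(\bar{x})]$ and kills it by explicit computation, where you invoke independence of the $Y_i$ directly), and the conclusion then follows from the covariance formulas and Lemma \ref{BB}, the key structural point in both arguments being that the $Y_3$ factor contributes only $P_\ell'$, so the logarithmic configuration $[P_\ell''(\cos\phi)\sin^2\phi]^4$ of Lemma \ref{AA} never occurs. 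One minor correction: the pure first-derivative configuration \emph{is} literally covered by Lemma \ref{BB} (it is the case $k=4$, as the paper notes immediately after that lemma), so your separate, somewhat heuristic treatment of $\int_0^1 t^4[P_\ell'(t)]^4\,dt$ is unnecessary (and, as stated, would need the measure of the region near $t=1$ to be quantified to actually yield $O(\ell^6)$).
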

\begin{proof}
By applying again the Diagram Formula (see \cite{MaPeCUP} Section 4.3.1), we have
\begin{align*}
&\int_{0}^{\pi / 2} \mathbb{E}[H_{3}(Y_{a}(\bar{x}))H_{1}(Y_{b}(\bar{x})) H_4(f_{\ell}(y(\phi)))] \sin \phi d \phi \\
& =  \int_{0}^{\pi / 2}\{ 3^2 4 \mathbb{E}[ Y_{a}(\bar{x}) Y_{b}(\bar{x}) ]  \mathbb{E}^2[ Y_{a}(\bar{x}) f_{\ell}(y(\phi))  ]+ 4!  \mathbb{E}^3[ Y_{a}(\bar{x}) f_{\ell}(y(\phi))  ]   \mathbb{E}[ Y_{b}(\bar{x}) f_{\ell}(y(\phi))  ] \} \sin \phi d \phi.
\end{align*}
We observe that
 \begin{align*}
\mathbb{E}[ Y_{3}(\bar{x}) Y_{5}(\bar{x}) ]&=   \frac{\sqrt 8 }{\sqrt{\lambda_{\ell}} \sqrt{3 \lambda_{\ell}-2}} \frac{\sqrt{3 \lambda_{\ell}-2}}{\lambda_{\ell} \sqrt{\lambda_{\ell}-2}}  \mathbb{E}[ \partial _{22;x}f_{\ell }(\bar{x}) \partial_{11;x}f_{\ell }(\bar{x}) ]\\
&\;\; -\frac{\sqrt 8 }{\sqrt{\lambda_{\ell}} \sqrt{3 \lambda_{\ell}-2} } \frac{\lambda_{\ell}+2}{\lambda_{\ell} \sqrt{3 \lambda_{\ell}-2} \sqrt{\lambda_{\ell}-2}}  \mathbb{E}[ \partial_{11;x}f_{\ell }(\bar{x}) \partial_{11;x}f_{\ell }(\bar{x})]\\
& =   \frac{\sqrt 8 }{\sqrt{\lambda_{\ell}} \sqrt{3 \lambda_{\ell}-2}} \frac{\sqrt{3 \lambda_{\ell}-2}}{\lambda_{\ell} \sqrt{\lambda_{\ell}-2}} \frac{\lambda_{\ell}}{8}[  \lambda_{\ell}+ 2] -\frac{\sqrt 8 }{\sqrt{\lambda_{\ell}} \sqrt{3 \lambda_{\ell}-2} } \frac{\lambda_{\ell}+2}{\lambda_{\ell} \sqrt{3 \lambda_{\ell}-2} \sqrt{\lambda_{\ell}-2}}   \frac{\lambda_{\ell}}{8}[ 3 \lambda_{\ell} -2]\\
& = 0,
\end{align*}
moreover
\begin{align*}
 \mathbb{E}[ Y_{3}(\bar{x}) f_{\ell}(y(\phi))  ] &= - \frac{\sqrt{8}}{\sqrt{\lambda_{\ell}} \sqrt{3 \lambda_{\ell}-2}}   P'_{\ell}(\cos \phi) \cos \phi,
\end{align*}
and
\begin{align*}
\mathbb{E}[ Y_{5}(\bar{x}) f_{\ell}(y(\phi))  ]  &= \frac{\sqrt{3 \lambda_{\ell}-2}}{\lambda_{\ell} \sqrt{ \lambda_{\ell}-2}}  \mathbb{E}[ \partial _{22;x}f_{\ell }(\bar{x}) f_{\ell}(y(\phi))]-\frac{\lambda_{\ell}+2}{\lambda_{\ell} \sqrt{ \lambda_{\ell}-2} \sqrt{3 \lambda_{\ell}-2}}  \mathbb{E}[ \partial_{11;x}f_{\ell }(\bar{x})f_{\ell}(y(\phi))]\\
&=\frac{\sqrt{3 \lambda_{\ell}-2}}{\lambda_{\ell} \sqrt{ \lambda_{\ell}-2}}  [P''(\cos \phi) \sin^2 \phi - P'_{\ell}(\cos \phi) \cos \phi ]-\frac{\lambda_{\ell}+2}{\lambda_{\ell} \sqrt{ \lambda_{\ell}-2} \sqrt{3 \lambda_{\ell}-2}}  [- P'_{\ell}(\cos \phi) \cos \phi ].
\end{align*}
The statement follows by applying Lemma \ref{BB}.
\end{proof}

\begin{lemma} \label{odd2}
For $a=1, 2$,
\begin{align*}
\int_{0}^{\pi / 2} \mathbb{E}[H_{2}(Y_{a}(\bar{x})) H_{1}(Y_{3}(\bar{x})) H_{1}(Y_{5}(\bar{x}))  H_4(f_{\ell}(y(\phi)))] \sin \phi d \phi=0.
\end{align*}
\end{lemma}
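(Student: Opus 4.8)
The plan is to compute the Gaussian expectation under the integral sign by the Diagram Formula (\cite{MaPeCUP}, Section 4.3.1), exploiting that at the single point $\bar{x}=(\pi/2,0)$ the variables $Y_1(\bar{x}),\dots,Y_5(\bar{x})$ are mutually independent. This is exactly what the Cholesky construction of $\Lambda_\ell$ provides: on the equator $\cot\theta_{\bar{x}}=0$, so the five entries of $\Lambda_\ell Y(\bar{x})$ are standard and independent, and in particular $\mathbb{E}[Y_a(\bar{x})Y_3(\bar{x})]=\mathbb{E}[Y_a(\bar{x})Y_5(\bar{x})]=\mathbb{E}[Y_3(\bar{x})Y_5(\bar{x})]=0$ for $a=1,2$. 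Hence in every complete diagram for $\mathbb{E}[H_2(Y_a(\bar{x}))H_1(Y_3(\bar{x}))H_1(Y_5(\bar{x}))H_4(f_\ell(y(\phi)))]$ no edge may join two of the first three vertices, so the four legs carried by $H_2(Y_a),H_1(Y_3),H_1(Y_5)$ must all be paired with the four legs of $H_4(f_\ell(y(\phi)))$. The diagram is therefore forced and the Diagram Formula yields
\[
\mathbb{E}[H_2(Y_a(\bar{x}))H_1(Y_3(\bar{x}))H_1(Y_5(\bar{x}))H_4(f_\ell(y(\phi)))]=4!\,\{\mathbb{E}[Y_a(\bar{x})f_\ell(y(\phi))]\}^2\,\mathbb{E}[Y_3(\bar{x})f_\ell(y(\phi))]\,\mathbb{E}[Y_5(\bar{x})f_\ell(y(\phi))].
\]

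For $a=1$ the conclusion is then immediate. As computed in the proof of Lemma \ref{subdomin1}, $\mathbb{E}[Y_1(\bar{x})f_\ell(y(\phi))]=0$ for every $\phi$, since the $\theta$-derivative of $P_\ell(\langle x,y\rangle)$ vanishes all along the equator; equivalently, $Y_1(\bar{x})$ is independent of the triple $(Y_3(\bar{x}),Y_5(\bar{x}),f_\ell(y(\phi)))$ and $\mathbb{E}[H_2(Y_1)]=0$. Thus the integrand is identically zero and the integral equals $0$. The same argument, with $\mathbb{E}[Y_4(\bar{x})f_\ell(y(\phi))]=0$ (again from Lemma \ref{subdomin1}), would dispatch the companion term of Lemma \ref{odd3}.

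The substantive case is $a=2$, where $Y_2(\bar{x})$ is genuinely correlated with $f_\ell(y(\phi))$ and the integrand does not vanish pointwise. Substituting $\mathbb{E}[Y_2(\bar{x})f_\ell(y(\phi))]=\sqrt{2/\lambda_\ell}\,P_\ell^{\prime}(\cos\phi)\sin\phi$, the value of $\mathbb{E}[Y_3(\bar{x})f_\ell(y(\phi))]$ from Lemma \ref{odd1}, and $\mathbb{E}[Y_5(\bar{x})f_\ell(y(\phi))]=\alpha_{1\ell}(P_\ell^{\prime\prime}(\cos\phi)\sin^2\phi-P_\ell^{\prime}(\cos\phi)\cos\phi)+\alpha_{2\ell}P_\ell^{\prime}(\cos\phi)\cos\phi$ (Lemmas \ref{domin2}, \ref{odd1}), the $\phi$-integral becomes a nonzero multiple of $\alpha_{1\ell}K_1+(\alpha_{2\ell}-\alpha_{1\ell})K_2$, where $K_1=\int_0^{\pi/2}(P_\ell^{\prime})^3P_\ell^{\prime\prime}\sin^5\phi\cos\phi\,d\phi$ and $K_2=\int_0^{\pi/2}(P_\ell^{\prime})^4\sin^3\phi\cos^2\phi\,d\phi$. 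I would then eliminate the second-derivative integral $K_1$ through the identity $(P_\ell^{\prime})^3P_\ell^{\prime\prime}=-\tfrac{1}{4\sin\phi}\tfrac{d}{d\phi}[(P_\ell^{\prime})^4]$ and one integration by parts; the boundary terms vanish at $\phi=0$ and $\phi=\pi/2$, giving $K_1=K_2-\tfrac14K_3$ with $K_3=\int_0^{\pi/2}(P_\ell^{\prime})^4\sin^5\phi\,d\phi$. Collecting terms, the integral collapses to a nonzero multiple of $\alpha_{2\ell}K_2-\tfrac14\alpha_{1\ell}K_3$, and the statement for $a=2$ is equivalent to the exact vanishing of this scalar combination.

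The main obstacle is precisely this last step: showing that $\alpha_{2\ell}K_2-\tfrac14\alpha_{1\ell}K_3$ is $0$ \emph{exactly}, not merely to leading order. This is delicate because $K_3$ carries the Berry logarithm $K_3\sim\frac{3\ell^{2}\log\ell}{2\pi^{2}}$ (Lemma \ref{domin1} with $r=1$) whereas the mixed integral $K_2$ does not, so the identity pins the precise $\ell$-dependence of the Cholesky coefficients $\alpha_{1\ell},\alpha_{2\ell}$ against the exact values of these weighted Legendre integrals. I would attempt it by evaluating $K_2$ and $K_3$ with the residue/$J_0$-integral machinery of \cite{CW} already used in Lemmas \ref{domin1}-\ref{domin3}, retaining all subleading contributions, and then verifying the arithmetic cancellation of the two surviving coefficients. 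This exact cancellation is the crux of the statement and the part I expect to be hardest to establish rigorously.
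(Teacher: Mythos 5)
Your Diagram Formula reduction is sound and, for $a=1$, coincides with the paper's proof: since $Y_1(\bar x),\dots,Y_5(\bar x)$ are independent, the only complete diagram pairs all four legs of $H_2(Y_a)H_1(Y_3)H_1(Y_5)$ with the four legs of $H_4(f_\ell(y(\phi)))$, giving $4!\,\{\mathbb{E}[Y_a(\bar x)f_\ell(y(\phi))]\}^2\,\mathbb{E}[Y_3(\bar x)f_\ell(y(\phi))]\,\mathbb{E}[Y_5(\bar x)f_\ell(y(\phi))]$, and $\mathbb{E}[Y_1(\bar x)f_\ell(y(\phi))]=0$ annihilates the integrand pointwise. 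The genuine gap is the case $a=2$, which you leave open --- and the exact cancellation you are chasing there is in fact unattainable, so this step would fail. Your integration by parts correctly yields $K_1=K_2-\tfrac14K_3$ and reduces the claim to the vanishing of $\alpha_{2\ell}K_2-\tfrac14\alpha_{1\ell}K_3$; but these two terms live on different scales. By Lemma \ref{AA}, $K_3=\int_0^{\pi/2}[P_\ell'(\cos\phi)\sin\phi]^4\sin\phi\,d\phi$ carries the logarithm, so $\alpha_{1\ell}K_3=\Theta(\log\ell)$, whereas $K_2=\int_0^{\pi/2}(P_\ell')^4\sin^3\phi\cos^2\phi\,d\phi$ has a \emph{nonnegative} integrand whose local range $\phi\lesssim C/\ell$ alone (where $P_\ell'(\cos\phi)\approx P_\ell'(1)=\lambda_\ell/2$) contributes $\asymp\ell^8\int_0^{C/\ell}\phi^3\,d\phi\asymp\ell^4$, so that $\alpha_{2\ell}K_2=\Theta(\ell^2)$. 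Hence $\alpha_{2\ell}K_2-\tfrac14\alpha_{1\ell}K_3=\Theta(\ell^2)$, and after the outer prefactor $\tfrac{2}{\lambda_\ell}\cdot\Theta(\ell^{-2})$ the $a=2$ integral is $\Theta(\ell^{-2})$, not zero; no amount of residue computation from \cite{CW} can rescue an identity that already fails at the level of orders of magnitude.

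For what it is worth, the discrepancy is not of your making: the paper's own proof disposes of $a=2$ by asserting $\mathbb{E}[Y_a(\bar x)f_\ell(y(\phi))]=0$ for $a=1,2$, which for $a=2$ contradicts the computation $\mathbb{E}[\partial_{2;x}f_\ell(\bar x)f_\ell(y(\phi))]=P_\ell'(\cos\phi)\sin\phi$ used in Lemma \ref{domin1} --- indeed it is precisely this nonvanishing covariance that produces the dominant logarithm there. So your diagnosis that the $a=2$ integrand does not vanish pointwise is correct, and the exact zero claimed in Lemma \ref{odd2} holds only for $a=1$; for $a=2$ the correct statement is the bound $O(\ell^{-2})$, which is all that the argument of Section \ref{oddt} and the proof of Theorem \ref{tisone1} require (the section's preamble itself only promises terms that are ``zero or of order $O(\ell^{-2})$''). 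Replacing the final step of your proposal by this bound --- which follows at once from your reduction together with the estimates of Lemmas \ref{AA} and \ref{BB} --- turns your argument into a complete and correct proof of what the lemma actually needs to deliver.
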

\begin{proof}
From Diagram Formula we have
\begin{align*}
& \int_{0}^{\pi / 2} \mathbb{E}[H_{2}(Y_{a}(\bar{x})) H_{1}(Y_{3}(\bar{x})) H_{1}(Y_{5}(\bar{x}))  H_4(f_{\ell}(y(\phi)))] \sin \phi d \phi \\
 &=   \int_{0}^{\pi / 2}\{  2 \mathbb{E}[Y_{a}(\bar{x}) Y_{3}(\bar{x}) ]  \mathbb{E}[ Y_{a}(\bar{x}) Y_{5}(\bar{x})] + 4! \mathbb{E}[Y_{a}(\bar{x}) Y_{3}(\bar{x})] \mathbb{E}[Y_{3}(\bar{x})f_{\ell}(y(\phi))] \mathbb{E}[Y_{a}(\bar{x})f_{\ell}(y(\phi))] \\
 &\;\;+ 3 \cdot 4 \mathbb{E}[Y_{3}(\bar{x}) Y_{5}(\bar{x})]\mathbb{E}^2[Y_{a}(\bar{x}) f_{\ell}(y(\phi))] + 4! \mathbb{E}[Y_{a}(\bar{x}) Y_{5}(\bar{x})]\mathbb{E}[Y_{3}(\bar{x}) f_{\ell}(y(\phi))]\mathbb{E}[Y_{a}(\bar{x}) f_{\ell}(y(\phi))] \\
 &\;\;+ 4! \mathbb{E}[Y_{3}(\bar{x}) f_{\ell}(y(\phi))]\mathbb{E}[Y_{5}(\bar{x}) f_{\ell}(y(\phi))]\mathbb{E}^2[Y_{a}(\bar{x}) f_{\ell}(y(\phi))] \} \sin \phi d \phi.
\end{align*}
The statement follows by observing that (for $a=1,2$)  $\mathbb{E}[Y_{a}(\bar{x}) Y_{3}(\bar{x}) ]=0$, $\mathbb{E}[ Y_{a}(\bar{x}) Y_{5}(\bar{x})]=0$, $\mathbb{E}[ Y_{3}(\bar{x}) Y_{5}(\bar{x}) ]=0$, and $\mathbb{E}[Y_{a}(\bar{x})f_{\ell}(y(\phi))] =0$.
\end{proof}

\begin{lemma} \label{odd3}
We have
\begin{align*}
 \int_{0}^{\pi / 2} \mathbb{E}[H_{2}(Y_{4}(\bar{x})) H_{1}(Y_{3}(\bar{x}))  H_{1}(Y_{5}(\bar{x})) H_4(f_{\ell}(y(\phi)))] \sin \phi d \phi=0.
 \end{align*}
\end{lemma}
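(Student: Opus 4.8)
The plan is to show that the integrand vanishes \emph{pointwise} in $\phi$, so that no integration or Legendre-polynomial asymptotics are required at all; this is in sharp contrast with the dominant Lemmas~\ref{domin1}--\ref{domin3}. The integrand is the expectation of a product of Hermite polynomials evaluated on the jointly Gaussian family $(Y_4(\bar x),Y_3(\bar x),Y_5(\bar x),f_\ell(y(\phi)))$, and the decisive structural observation is that $Y_4(\bar x)$ decouples from the three remaining variables.

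First I would record the covariances that enter the Diagram Formula (see \cite{MaPeCUP}, Section 4.3.1), exactly as in the proofs of Lemmas~\ref{odd1}--\ref{odd2}. Since $Y(\bar x)=(Y_1(\bar x),\dots,Y_5(\bar x))$ is by construction a standard Gaussian vector with independent components, we have $\mathbb{E}[Y_4(\bar x)Y_3(\bar x)]=\mathbb{E}[Y_4(\bar x)Y_5(\bar x)]=0$. The key input is that $\mathbb{E}[Y_4(\bar x)f_\ell(y(\phi))]=0$ for every $\phi$: indeed $Y_4(\bar x)$ is proportional to $\partial_{21;x}f_\ell(\bar x)$, and it was already shown in the proof of Lemma~\ref{subdomin1} that $\mathbb{E}[\partial_{21;x}f_\ell(x)f_\ell(y(\phi))]=0$ for $x=\bar x=(\pi/2,0)$ and $y=y(\phi)=(\pi/2,\phi)$, both contributions there vanishing identically on the equator $\theta\equiv\pi/2$ (since $\cos\theta_{\bar x}=0$).

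With these covariances in hand the conclusion is immediate. Being uncorrelated with each of $Y_3(\bar x)$, $Y_5(\bar x)$ and $f_\ell(y(\phi))$, and jointly Gaussian with them, the variable $Y_4(\bar x)$ is independent of the triple $(Y_3(\bar x),Y_5(\bar x),f_\ell(y(\phi)))$; hence the expectation factorizes as
\[
\mathbb{E}\!\left[H_2(Y_4(\bar x))\right]\,\mathbb{E}\!\left[H_1(Y_3(\bar x))H_1(Y_5(\bar x))H_4(f_\ell(y(\phi)))\right]=0,
\]
the first factor vanishing since $\mathbb{E}[H_2(Z)]=0$ for a standard Gaussian $Z$. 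Equivalently, one reaches the same conclusion through the Diagram Formula as used in Lemma~\ref{odd2}: the four legs contributed by $H_4(f_\ell(y(\phi)))$ must all be paired with the four legs carried by $H_2(Y_4(\bar x))H_1(Y_3(\bar x))H_1(Y_5(\bar x))$, so every complete diagram pairs at least one leg of $Y_4(\bar x)$ with $f_\ell(y(\phi))$ and therefore carries a factor $\mathbb{E}[Y_4(\bar x)f_\ell(y(\phi))]=0$. Integrating the identically-zero integrand against $\sin\phi\,d\phi$ then yields the claimed value $0$. The only point requiring any care is the correct importation of the vanishing of $\mathbb{E}[\partial_{21;x}f_\ell(x)f_\ell(y(\phi))]$ from Lemma~\ref{subdomin1}; there is no genuine analytic obstacle in this lemma.
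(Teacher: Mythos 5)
Your proof is correct, and it rests on exactly the same covariance inputs as the paper's: $\mathbb{E}[Y_4(\bar x)Y_3(\bar x)]=\mathbb{E}[Y_4(\bar x)Y_5(\bar x)]=0$ (immediate from the Cholesky construction, under which the components of $Y(\bar x)$ are independent) and $\mathbb{E}[Y_4(\bar x)f_\ell(y(\phi))]=0$, correctly imported from the proof of Lemma \ref{subdomin1}, where both contributions to $\mathbb{E}[\partial_{21;x}f_\ell(x)f_\ell(y(\phi))]$ vanish on the equator. The difference is in the packaging. The paper applies the Diagram Formula directly, writes out the resulting sum of products of covariances, and observes that every term contains at least one vanishing factor. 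Your primary argument is more structural and slightly cleaner: uncorrelatedness plus joint Gaussianity gives independence of $Y_4(\bar x)$ from the triple $(Y_3(\bar x),Y_5(\bar x),f_\ell(y(\phi)))$, so the expectation factorizes as $\mathbb{E}[H_2(Y_4(\bar x))]$ times a finite quantity, and dies because $\mathbb{E}[H_2(Y_4(\bar x))]=0$; this avoids the diagram expansion altogether and makes it transparent that the integrand vanishes identically in $\phi$. Your alternative diagram argument is in fact sharper than the paper's bookkeeping: since $H_4(f_\ell(y(\phi)))$ contributes four legs and the other three polynomials contribute exactly four, every admissible pairing (with no flat edges) must match both legs of $H_2(Y_4(\bar x))$ with legs of $f_\ell(y(\phi))$, so the single covariance $\mathbb{E}[Y_4(\bar x)f_\ell(y(\phi))]=0$ already kills every diagram, without invoking the cross-covariances among the $Y_i(\bar x)$. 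Both routes are valid; yours trades the paper's explicit enumeration for an independence observation, which is the kind of argument that also generalizes immediately to Lemma \ref{odd2} and to any product in which $Y_1$, $Y_2$ or $Y_4$ appears with an $H_2$ factor.
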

\begin{proof}
Once again, the statement follows from Diagram Formula, which gives
\begin{align*}
 &\int_{0}^{\pi / 2} \mathbb{E}[H_{2}(Y_{4}(\bar{x})) H_{1}(Y_{3}(\bar{x}))  H_{1}(Y_{5}(\bar{x})) H_4(f_{\ell}(y(\phi)))] \sin \phi d \phi \\
 &=   \int_{0}^{\pi / 2}\{  2 \mathbb{E}[Y_{4}(\bar{x}) Y_{3}(\bar{x}) ]  \mathbb{E}[ Y_{4}(\bar{x}) Y_{5}(\bar{x})] + 4! \mathbb{E}[Y_{4}(\bar{x}) Y_{3}(\bar{x})] \mathbb{E}[Y_{3}(\bar{x})f_{\ell}(y(\phi))] \mathbb{E}[Y_{4}(\bar{x})f_{\ell}(y(\phi))] \\
 &\;\;+ 3 \cdot 4 \mathbb{E}[Y_{3}(\bar{x}) Y_{5}(\bar{x})]\mathbb{E}^2[Y_{4}(\bar{x}) f_{\ell}(y(\phi))] + 4! \mathbb{E}[Y_{4}(\bar{x}) Y_{5}(\bar{x})]\mathbb{E}[Y_{3}(\bar{x}) f_{\ell}(y(\phi))]\mathbb{E}[Y_{4}(\bar{x}) f_{\ell}(y(\phi))] \\
 &\;\;+ 4! \mathbb{E}[Y_{3}(\bar{x}) f_{\ell}(y(\phi))]\mathbb{E}[Y_{5}(\bar{x}) f_{\ell}(y(\phi))]\mathbb{E}^2[Y_{4}(\bar{x}) f_{\ell}(y(\phi))] \} \sin \phi d \phi
\end{align*}
and $\mathbb{E}[Y_{4}(\bar{x}) Y_{3}(\bar{x}) ]=0$, $\mathbb{E}[ Y_{4}(\bar{x}) Y_{5}(\bar{x})]=0$, $\mathbb{E}[ Y_{3}(\bar{x}) Y_{5}(\bar{x}) ]=0$, $\mathbb{E}[Y_{4}(\bar{x})f_{\ell}(y(\phi))] =0$.
\end{proof}

\subsection{Some useful integrals}
We write as usual
\begin{equation*}
P_{\ell }^{(r)}(u)=\frac{d^{r}}{du^{r}}P_{\ell }(u).
\end{equation*}
For our main arguments to follow, a key step is to recall the following
results, which are proved in \cite{CMW}, Lemma C3.
For all constants $C>0,$ we have, uniformly over ${C}/{\ell }\leq \phi
\leq {\pi }/{\ell }$
\begin{equation} \label{C3a}
P_{\ell }^{(r)}(u)=\sqrt{\frac{2}{\pi }}\frac{\ell ^{2r-\frac{1}{2}}}{\sin
^{r+\frac{1}{2}}\phi }(-1)^{r/2}\cos \psi _{\ell }^{\pm }+R_{\ell
}^{(r)}(\phi ), \hspace{1cm} r=0,1,2,
\end{equation}
where $\psi _{\ell }^{\pm }=(\ell +{1}/{2})\phi -{\pi }/{4}$ for $
r=0, 2$, and $\psi _{\ell }^{\pm }=(\ell +{1}/{2})\phi +{\pi }/{4}$
for $r=1$, and
\begin{equation}
R_{\ell }^{(0)}(\phi )=O\left(\frac{1}{\sqrt{\ell \phi} }\right), \;\; R_{\ell }^{(1)}(\phi )=O\left(\frac{1}{\sqrt{\ell }\phi^{5/2}}\right), \;\; R_{\ell }^{(2)}(\phi )=O\left(\frac{\sqrt{\ell }}{\phi ^{7/2}}\right).
\label{C3b}
\end{equation}
Our results will then follow from the following two lemmas:
\begin{lemma} \label{AA}
For $r=0, 1, 2$, we have
\begin{equation*}
\int_{0}^{\pi /2}\;[P_{\ell }^{(r)}(\cos \phi )\sin ^{r}\phi ]^{4}\sin \phi
d\phi =\frac{3\ell ^{4r}}{\pi ^{2}}\frac{\log \ell }{2\ell ^{2}}+O(\ell
^{4r-2}).
\end{equation*}
Likewise
\begin{align*}
&\int_{0}^{\pi /2}\;[P_{\ell }(\cos \phi )]^{2}[P_{\ell }^{\prime \prime
}(\cos \phi )\sin ^{2}\phi ]^{2}\sin \phi d\phi =\frac{3\ell ^{2}\log \ell }{
2\pi ^{2}}+O(\ell ^{2}),\\
&\int_{0}^{\pi /2}\;[P_{\ell }(\cos \phi )]^{2}[P_{\ell }^{\prime }(\cos \phi
)\sin \phi ]^{2}\sin \phi d\phi =\frac{\log \ell }{2\pi ^{2}}+O(1),\\
&\int_{0}^{\pi /2}\;[P_{\ell }^{\prime }(\cos \phi )]^{2}[P_{\ell }^{\prime
\prime }(\cos \phi )\sin \phi ]^{2}\sin \phi d\phi =\frac{\ell ^{4}\log \ell
}{2\pi ^{2}}+O(\ell ^{4}).
\end{align*}
\end{lemma}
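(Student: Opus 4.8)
The plan is to prove all four asymptotics by a single uniform mechanism: on the bulk of the range the integrand decomposes, via the Hilb-type expansion \eqref{C3a}, into a non-oscillatory part that carries the logarithm and oscillatory/remainder parts that are absorbed into the error. First I would split each integral as $\int_0^{\pi/2}=\int_0^{C/\ell}+\int_{C/\ell}^{\pi/2}$ for a fixed large constant $C$. On the polar cap $[0,C/\ell]$ I would use the crude bounds $|P_\ell^{(r)}(\cos\phi)|\le P_\ell^{(r)}(1)=O(\ell^{2r})$ together with $\sin^r\phi=O(\ell^{-r})$, so that every factor $P_\ell^{(r)}(\cos\phi)\sin^r\phi$ is $O(\ell^{r})$ there; since $\int_0^{C/\ell}\sin\phi\,d\phi=O(\ell^{-2})$, the cap contributes precisely at the level of the stated error term (for instance $O(\ell^{4r-2})$ in the first family), and may be discarded. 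The point of the $\sin$-weights attached to each Legendre factor is that they normalize its leading amplitude to $\sqrt{2/\pi}\,\ell^{p}\sin^{-1/2}\phi$; taking the relevant fourth power or product of squares gives $\sin^{-2}\phi$, and the outer weight $\sin\phi$ then leaves exactly $\sin^{-1}\phi$, which is what produces the logarithm.

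On the bulk $[C/\ell,\pi/2]$ I would substitute \eqref{C3a}, writing each factor as its oscillatory main term $\sqrt{2/\pi}\,\ell^{r-1/2}\sin^{-(r+1/2)}\phi\,(-1)^{r/2}\cos\psi_\ell^{\pm}$ plus a remainder governed by \eqref{C3b}. Collecting leading terms, the integrand of the $r$-th pure power becomes $\tfrac{4}{\pi^2}\,\ell^{4r-2}\,\sin^{-1}\phi\,\cos^4\psi_\ell^{\pm}$, and each mixed integrand becomes $\tfrac{4}{\pi^2}\,\ell^{p}\,\sin^{-1}\phi$ times a product of two squared cosines. The key algebraic step is the linearization $\cos^4\psi=\tfrac38+\tfrac12\cos2\psi+\tfrac18\cos4\psi$, together with the product-to-sum identities for the mixed terms. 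The constant part produces the logarithm through $\int_{C/\ell}^{\pi/2}\frac{d\phi}{\sin\phi}=\log\ell+O(1)$, while the purely oscillatory parts integrate to $O(1)$ against $\sin^{-1}\phi$ by integration by parts, each factor $\cos\!\big(k(\ell+\tfrac12)\phi\big)$ oscillating at frequency $\sim\ell$.

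This bookkeeping immediately delivers the three distinct numerical constants. Whenever all factors are derivatives of the same parity they share the phase $\psi_\ell^{-}$ (recall $r=0,2$ carry $-\pi/4$ and $r=1$ carries $+\pi/4$), the non-oscillatory coefficient is $\tfrac38$, and the leading constant is $\tfrac38\cdot\tfrac4{\pi^2}=\tfrac{3}{2\pi^2}$; this covers the pure powers and the integral with $P_\ell$ and $P_\ell''\sin^2\phi$. Whenever the phases differ by $\pi/2$ (one odd-order factor present), $\cos^2\psi_\ell^{-}\cos^2\psi_\ell^{+}$ has constant part $\tfrac18$, giving $\tfrac18\cdot\tfrac4{\pi^2}=\tfrac1{2\pi^2}$; this covers the remaining two mixed integrals. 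In each case the explicit power of $\ell$ coming from the amplitudes reproduces exactly the prefactor in the statement.

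It remains to show that every contribution containing a remainder $R_\ell^{(r)}$ is of the stated error order, and \emph{this is the main technical obstacle}. Here I would insert the bounds \eqref{C3b} and estimate the resulting elementary integrals $\int_{C/\ell}^{\pi/2}\phi^{-a}\,d\phi$, the dominant cross term being the one linear in the remainder, with higher powers of $R$ smaller. Care is needed for $r=2$, where $R_\ell^{(2)}=O(\sqrt{\ell}\,\phi^{-7/2})$ is strongly singular at the pole, so one must verify that pairing it against the main term's decay still yields a contribution of size $O(\ell^{4r-2})$ (respectively $O(\ell^2)$, $O(1)$, $O(\ell^4)$ for the mixed integrals) rather than disturbing the logarithmic leading order. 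Once these remainder estimates are in place, adding the bulk main term to the negligible polar-cap, oscillatory, and remainder contributions gives each of the four claimed asymptotics.
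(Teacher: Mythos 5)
Your proposal is correct and follows essentially the same route as the paper's own proof: the same splitting at $\phi=C/\ell$ with the crude polar-cap bound, the same substitution of the Hilb-type asymptotics on the bulk, the same linearization of $\cos^{4}\psi$ (respectively $\cos^{2}\psi_{\ell}^{-}\cos^{2}\psi_{\ell}^{+}=\tfrac14\cos^{2}((2\ell+1)\phi)$ when one odd-order factor is present), the logarithm extracted from $\int_{C/\ell}^{\pi/2}\sin^{-1}\phi\,d\phi=\log\ell+O(1)$, and the remainder terms controlled through \eqref{C3b} and elementary integrals exactly as you outline. The only differences are cosmetic: you justify the $O(1)$ bound on the purely oscillatory pieces by integration by parts (the paper absorbs them silently into the error term), and you work with the amplitude $\ell^{r-1/2}$, which is indeed what the paper uses in its actual computation, notwithstanding the misprint $\ell^{2r-1/2}$ in \eqref{C3a}.
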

\begin{remark}
More compactly, for $r_{1}, r_{2}=0, 1, 2$, we could have written the single expression
\begin{align*}
\int_{0}^{\pi /2}\;[P_{\ell }^{(r_{1})}(\cos \phi )\sin ^{r_{1}}\phi
]^{2}[P_{\ell }^{(r_{2})}(\cos \phi )\sin ^{r_{2}}\phi ]^{2}\sin \phi d\phi &=
\frac{(2+(-1)^{r_{1}+r_{2}})\ell ^{2(r_{1}+r_{2})}}{\pi ^{2}}\frac{\log \ell
}{2\ell ^{2}}\\
&\;\;+O(\ell ^{2(r_{1}+r_{2})-2}).
\end{align*}
\end{remark}
\begin{proof}
We recall first that $P_{\ell }^{(r)}(\cos \phi )\leq \ell ^{2r}$ for all $\phi \in \lbrack
0,2\pi )$. Hence
\begin{equation*}
\int_{0}^{C/\ell }[P_{\ell }^{(r)}(\cos \phi )\sin ^{r}\phi ]^{4}\sin \phi
d\phi \leq \text{const} \times \ell ^{8r}\int_{0}^{C/\ell }\sin ^{4r+1}\phi d\phi =O(\ell ^{4r-2}),
\end{equation*}
and it suffices to consider $\phi >{C}/{\ell }$. Hence we have
\begin{align*}
&\int_{0}^{\pi /2}[P_{\ell }^{(r)}(\cos \phi )\sin ^{r}\phi ]^{4}\sin \phi
d\phi\\
&=\frac{2^{2}}{\pi ^{2}}\int_{C/\ell }^{\pi /2}\left[ \frac{\ell ^{r}}{
\sqrt{\ell \sin \phi }}\cos \left((\ell +{1}/{2})\phi \pm \frac{\pi }{4}\right)
\right] ^{4}\sin \phi d\phi  \\
&\;\;+4\frac{2^{3/2}}{\pi ^{3/2}}\int_{C/\ell }^{\pi /2}\left[ \frac{\ell ^{r}}{
\sqrt{\ell \sin \phi }}\cos \left((\ell +{1}/{2})\phi \pm \frac{\pi }{4}\right)
\right] ^{3}\left[ R_{\ell }^{(r)}(\phi )\sin ^{r}\phi \right] \sin \phi
d\phi  \\
&\;\;+6\frac{2}{\pi }\int_{C/\ell }^{\pi /2}\left[ \frac{\ell ^{r}}{\sqrt{\ell
\sin \phi }}\cos \left((\ell +{1}/{2})\phi \pm \frac{\pi }{4}\right)\right] ^{2}
\left[ R_{\ell }^{(r)}(\phi )\sin ^{r}\phi \right] ^{2}\sin \phi d\phi  \\
&\;\;+4\frac{2^{1/2}}{\pi ^{1/2}}\int_{C/\ell }^{\pi /2}\left[ \frac{\ell ^{r}}{
\sqrt{\ell \sin \phi }}\cos \left((\ell +{1}/{2})\phi \pm \frac{\pi }{4}\right)
\right] \left[ R_{\ell }^{(r)}(\phi )\sin ^{r}\phi \right] ^{3}\sin \phi
d\phi  \\
&\;\;+\int_{C/\ell }^{\pi /2}\left[ R_{\ell }^{(r)}(\phi )\sin ^{r}\phi \right]^{4}\sin \phi d\phi.
\end{align*}
It is not difficult to see that, for $k=1,\dots ,4$,
\begin{equation*}
\int_{C/\ell }^{\pi /2}\left[ \frac{\ell ^{r}}{\sqrt{\ell \sin \phi }}\cos
\left((\ell +{1}/{2})\phi \pm \frac{\pi }{4}\right)\right] ^{k}\left[ R_{\ell
}^{(r)}(\phi )\sin ^{r}\phi \right] ^{4-k}\sin \phi d\phi =O(\ell ^{4r-2});
\end{equation*}
indeed the previous integrals are bounded by, for $r=2$,
\begin{align*}
\ell ^{kr-k/2}\int_{C/\ell }^{\pi /2}\frac{1}{\sin ^{k/2}\phi }\left[
R_{\ell }^{(2)}(\phi )\sin ^{r}\phi \right] ^{4-k}\sin \phi d\phi  &\leq
 \text{const} \times \ell ^{3k/2}\int_{C/\ell }^{\pi /2}\frac{1}{\sin ^{k/2}\phi }
\left[ \frac{\ell ^{1/2}}{\phi ^{3/2}}\right] ^{4-k}\sin \phi d\phi  \\
&\leq   \text{const} \times \ell ^{k+2}\int_{C/\ell }^{\pi /2}\frac{1}{\sin
^{k/2}\phi }\left[ \frac{1}{\phi ^{3/2}}\right] ^{4-k}\sin \phi d\phi  \\
&\leq   \text{const} \times \ell ^{k+2}\int_{C/\ell }^{\pi /2}\frac{1}{\sin
^{k/2}\phi }\left[ \frac{1}{\phi ^{3/2}}\right] ^{4-k}\sin \phi d\phi  \\
&\leq  \text{const} \times \ell ^{k+2}\int_{C/\ell }^{\pi /2}\frac{1}{\sin
^{k/2}\phi }\frac{1}{\phi ^{6-3k/2}}\sin \phi d\phi  \\
&\leq  \text{const} \times \ell ^{k+2}\int_{C/\ell }^{\pi /2}\phi ^{k-5}d\phi
=O(\ell ^{6}).
\end{align*}
Likewise, for $r=1$,
\begin{align*}
\ell ^{k-k/2}\int_{C/\ell }^{\pi /2}\frac{1}{\sin ^{k/2}\phi }\left[ R_{\ell
}^{(r)}(\phi )\sin ^{r}\phi \right] ^{4-k}\sin \phi d\phi  &\leq
 \text{const} \times \ell ^{k/2}\int_{C/\ell }^{\pi /2}\frac{1}{\sin ^{k/2}\phi }
\left[ \frac{1}{\ell ^{1/2}\phi ^{3/2}}\right] ^{4-k}\sin \phi d\phi  \\
&\leq  \text{const} \times \ell ^{k-2}\int_{C/\ell }^{\pi /2}\frac{1}{\sin
^{k/2}\phi }\left[ \frac{1}{\phi ^{3/2}}\right] ^{4-k}\sin \phi d\phi  \\
&\leq  \text{const} \times \ell ^{k-2}\int_{C/\ell }^{\pi /2}\frac{1}{\sin
^{k/2}\phi }\frac{1}{\phi ^{6-3k/2}}\sin \phi d\phi  \\
&\leq  \text{const} \times \ell ^{k-2}\int_{C/\ell }^{\pi /2}\phi ^{k-5}d\phi
=O(\ell ^{2}).
\end{align*}
Thus
\begin{equation*}
\int_{0}^{\pi /2}[P_{\ell }^{(r)}(\cos \phi )\sin ^{r}\phi ]^{4}\sin \phi
d\phi =\frac{2^{2}}{\pi ^{2}}\int_{C/\ell }^{\pi /2}\left[ \frac{\ell ^{r}}{
\sqrt{\ell \sin \phi }}\cos ((\ell +\frac{1}{2})\phi \pm \frac{\pi }{4})
\right] ^{4}\sin \phi d\phi +O(\ell ^{4r-2}).
\end{equation*}
The following equalities can be established by simple trigonometric
identities:
\begin{equation*}
\cos ^{4}((\ell +\frac{1}{2})\phi -\frac{\pi }{4})=\frac{3}{8}+\frac{1}{8}
(-\cos (2\phi (2\ell +1))+4\sin (\phi (2\ell +1))),
\end{equation*}
\begin{equation*}
\cos ^{4}((\ell +\frac{1}{2})\phi +\frac{\pi }{4})=\frac{3}{8}+\frac{1}{8}
(-\cos (2\phi (2\ell +1))-4\sin (\phi (2\ell +1))).
\end{equation*}
Thus we have
\begin{align*}
\int_{0}^{\pi /2}[P_{\ell }^{(r)}(\cos \phi )\sin ^{r}\phi ]^{4}\sin \phi
d\phi & =\ell ^{4r-2}\frac{2^{2}}{\pi ^{2}}\frac{3}{8}\int_{C/\ell }^{\pi /2}
\frac{1}{\sin \phi }d\phi +O(\ell ^{4r-2}) \\
& =\ell ^{4r-2}\frac{2^{2}}{\pi ^{2}}\frac{3}{8}\log \ell ++O(\ell ^{4r-2})\\
& =\frac{3}{2\pi ^{2}}\ell ^{4r-2}\log \ell +O(\ell ^{4r-2}),
\end{align*}
since
\begin{equation*}
\int_{C/\ell }^{\pi /2}\frac{1}{\sin \phi }d\phi =\left. \frac{1}{2}\log
\left( \frac{1-\cos \phi }{1+\cos \phi }\right) \right\vert _{C/\ell }^{\pi
/2}=\log \ell +O(1).
\end{equation*}
The proof of the first part of the lemma is then concluded. The proof of the
second result is very similar and we can omit some details; in particular,
we simply recall the identity
\begin{align*}
&\cos ^{2}\left(\frac{2\ell +1}{2}\phi +\frac{\pi }{4}\right)\cos^{2}\left(\frac{2\ell +1}{
2}\phi -\frac{\pi }{4}\right) \\
&=\left[\frac{\sqrt{2}}{2}\cos \left(\frac{2\ell +1}{2}\phi \right)-\frac{\sqrt{2}}{2}\sin \left(
\frac{2\ell +1}{2}\phi \right)\right]^{2}\left[\frac{\sqrt{2}}{2}\cos \left(\frac{2\ell +1}{2}\phi
\right)+\frac{\sqrt{2}}{2}\sin \left(\frac{2\ell +1}{2}\phi \right)\right]^{2} \\
&=\frac{1}{4}\left[\cos^{2}\left(\frac{2\ell +1}{2}\phi \right)-\sin^{2}\left(\frac{2\ell +1}{2
}\phi \right)\right]^{2} \\
&=\frac{1}{4}\cos ^{2}((2\ell +1)\phi ).
\end{align*}
Because $\cos 2x+1=2\cos ^{2}x$, it is not difficult to see that
\begin{align*}
\int_{C/\ell }^{\pi /2}\frac{\cos ^{2}((2\ell +1)\phi )}{\sin \phi }d\phi
&=\frac{1}{2}\int_{C/\ell }^{\pi /2}\frac{1}{\sin \phi }d\phi +\int_{C/\ell
}^{\pi /2}\frac{\cos (2(2\ell +1)\phi )}{2\sin \phi }d\phi  =\frac{1}{2}\log \ell +O(1).
\end{align*}
Dealing with the lower order terms as in the first part of the lemma we can
now conclude with our second statement, i.e.,
\begin{align*}
\int_{0}^{\pi /2}\;[P_{\ell }(\cos \phi )\sin ^{r_{1}}\phi ]^{2}[P_{\ell
}^{(4)}(\cos \phi )\sin ^{r_{2}}\phi ]^{2}\sin \phi d\phi  &=\frac{3\ell
^{8}}{2\pi ^{2}}\frac{\log \ell }{\ell ^{2}}+O(\ell ^{6}),  \\
\int_{0}^{\pi /2}\;[P_{\ell }(\cos \phi )\sin ^{r_{1}}\phi ]^{2}[P_{\ell
}^{(2)}(\cos \phi )\sin ^{r_{2}}\phi ]^{2}\sin \phi d\phi  &=\frac{\ell ^{8}
}{\pi ^{2}}\frac{\log \ell }{2\ell ^{2}}+O(\ell ^{6}), \\
\int_{0}^{\pi /2}\;[P_{\ell }^{(2)}(\cos \phi )\sin ^{r_{1}}\phi
]^{2}[P_{\ell }^{(4)}(\cos \phi )\sin ^{r_{2}}\phi ]^{2}\sin \phi d\phi  &=
\frac{\ell ^{8}}{\pi ^{2}}\frac{\log \ell }{2\ell ^{2}}+O(\ell ^{6}).
\end{align*}
\end{proof}
\noindent In our second auxiliary result, an upper bound is given.
\begin{lemma} \label{BB} As $\ell \rightarrow \infty$, we have that
\begin{equation*}
\int_{0}^{\pi /2}\left\vert P_{\ell }^{\prime }(\cos \phi )\right\vert
^{k}\left\vert P_{\ell }^{\prime \prime }(\cos \phi )\sin ^{2}\phi
\right\vert ^{4-k}\sin \phi d\phi =O(\ell ^{6}), \;\; \text{for all}\;\;  k=1,\dots 4.
\end{equation*}
\end{lemma}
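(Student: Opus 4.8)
The plan is to estimate the integral by splitting the range at the scale $\phi=C/\ell$, exactly as in the proof of Lemma \ref{AA}: on the polar cap $[0,C/\ell]$ I would use the crude global bounds $|P_\ell^{(r)}(\cos\phi)|\le\ell^{2r}$, while on the bulk $[C/\ell,\pi/2]$ I would insert the uniform asymptotics \eqref{C3a}--\eqref{C3b}. Since the claim is only the upper bound $O(\ell^{6})$, I will never need the oscillation of the cosine factors; bounding $|\cos\psi_\ell^{\pm}|\le 1$ throughout suffices, which is what makes the argument much shorter than that of Lemma \ref{AA}.

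For the polar cap, using $\sin\phi\le\phi$ together with $|P_\ell'|\le\ell^{2}$ and $|P_\ell''|\le\ell^{4}$, the integrand is bounded by $(\ell^{2})^{k}(\ell^{4}\sin^{2}\phi)^{4-k}\sin\phi=\ell^{16-2k}\sin^{9-2k}\phi$, so that
\begin{equation*}
\int_{0}^{C/\ell}|P_\ell'(\cos\phi)|^{k}\,|P_\ell''(\cos\phi)\sin^{2}\phi|^{4-k}\sin\phi\,d\phi\le \mathrm{const}\times\ell^{16-2k}\int_{0}^{C/\ell}\phi^{9-2k}\,d\phi=O(\ell^{6}),
\end{equation*}
the last step because $\int_{0}^{C/\ell}\phi^{9-2k}\,d\phi=O(\ell^{2k-10})$ for every $k=1,\dots,4$ (the exponent $9-2k\ge 1>-1$, so the integral is finite and dominated by its upper endpoint).

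For the bulk I would first record the consequence of \eqref{C3a}--\eqref{C3b} that, with $C$ chosen large enough and uniformly on $[C/\ell,\pi/2]$,
\begin{equation*}
|P_\ell'(\cos\phi)|=O\!\left(\frac{\ell^{1/2}}{\sin^{3/2}\phi}\right),\qquad |P_\ell''(\cos\phi)\sin^{2}\phi|=O\!\left(\frac{\ell^{3/2}}{\sin^{1/2}\phi}\right).
\end{equation*}
Here the remainders $R_\ell^{(1)},R_\ell^{(2)}$ are absorbed into the main terms because on this range $\sin\phi\asymp\phi\ge C/\ell$, so the ratios $R_\ell^{(1)}/(\ell^{1/2}\sin^{-3/2}\phi)$ and $R_\ell^{(2)}/(\ell^{3/2}\sin^{-5/2}\phi)$ are both $O((\ell\phi)^{-1})=O(C^{-1})$. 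Inserting these bounds, the integrand on the bulk is $O(\ell^{6-k}\sin^{-(k+1)}\phi)$, whence
\begin{equation*}
\int_{C/\ell}^{\pi/2}|P_\ell'(\cos\phi)|^{k}\,|P_\ell''(\cos\phi)\sin^{2}\phi|^{4-k}\sin\phi\,d\phi=O\!\left(\ell^{6-k}\int_{C/\ell}^{\pi/2}\phi^{-(k+1)}\,d\phi\right)=O(\ell^{6}),
\end{equation*}
since $\int_{C/\ell}^{\pi/2}\phi^{-(k+1)}\,d\phi=O(\ell^{k})$ for every $k\ge 1$. Adding the two contributions yields the lemma.

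The one point that deserves care, and which I regard as the crux, is the bookkeeping of exponents: the power of $\ell$ from the amplitudes ($\ell^{6-k}$) and the power from the $\phi$-integral near the cap boundary ($\ell^{k}$) are exactly complementary, so the final order $\ell^{6}$ is independent of $k$. I would verify this balance separately in each regime (the polar cap, and $k=1,2,3,4$) rather than rely on a single generic estimate. Everything else is routine power-counting, and no cancellation among oscillatory terms is needed.
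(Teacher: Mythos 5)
Your proposal is correct and follows essentially the same route as the paper's proof: split at $\phi=C/\ell$, use the crude sup bounds $|P_{\ell}^{(r)}|\le \ell^{2r}$ on the polar cap, and on the bulk take absolute values of the Hilb-type asymptotics so that pure power counting gives $O(\ell^{6-k}\cdot \ell^{k})=O(\ell^{6})$ for each $k$, with no need for oscillation. The only cosmetic differences are that you absorb the remainders $R_{\ell}^{(1)},R_{\ell}^{(2)}$ into the main terms explicitly via the ratio $O((\ell\phi)^{-1})$ where the paper invokes ``computations analogous to Lemma \ref{AA}'', and that both you and the paper's own proofs in fact use the amplitude $\ell^{r-\frac{1}{2}}\sin^{-(r+\frac{1}{2})}\phi$ rather than the exponent $\ell^{2r-\frac{1}{2}}$ printed in \eqref{C3a}, which is evidently a typo there.
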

\begin{proof}
As before, for the ``local" component where $\phi <C/\ell $ (some fixed constant $C)$ we have
\begin{align*}
\int_{0}^{C/\ell }\;\left\{ P_{\ell }^{\prime \prime }(\cos \phi )\sin
^{2}\phi \right\} ^{k}\left\{ P_{\ell }^{\prime }(\cos \phi )\cos \phi
\right\} ^{4-k}\sin \phi  &\leq  \text{const} \times \ell ^{4k}\times \ell
^{8-2k}\int_{0}^{C/\ell }\;\sin ^{2k}\phi \sin \phi d\phi  \\
&=O(\ell ^{8+2k-(2k+2)})=O(\ell ^{6})\ .
\end{align*}
On the other hand, using again formulas \eqref{C3a} and \eqref{C3b}, and computations
analogous to Lemma \ref{AA}, we find easily that
\begin{align*}
&\int_{C/\ell }^{\pi /2}\left\{ P_{\ell }^{\prime }(\cos \phi )\cos \phi
\right\} ^{k}\left\{ P_{\ell }^{\prime \prime }(\cos \phi )\sin ^{2}\phi
\right\} ^{4-k}\sin \phi d\phi  \\
&\leq  \text{const} \times \ell ^{k/2}\times \ell ^{6-3k/2}\int_{C/\ell }^{\pi /2}
\frac{1}{\sin ^{3k/2}\phi }\frac{1}{\sin ^{2-k/2}\phi }\sin \phi d\phi
+O(\ell ^{6}) \\
&\leq  \text{const} \times \ell ^{6-k}\int_{C/\ell }^{\pi /2}\frac{1}{\sin
^{2+k}\phi }\sin \phi d\phi +O(\ell ^{6})=O(\ell ^{6}).
\end{align*}
\end{proof}
\noindent Note in particular that for $k=4$ we obtain the bound
\begin{equation*}
\int_{0}^{\pi /2}[P_{\ell }^{\prime }(\cos \phi )]^{4}\sin \phi d\phi
=O(\ell^{6}).
\end{equation*}

\end{document}